\newtheorem{theorem}{Theorem}[section]
\newtheorem{proposition}[theorem]{Proposition}
\newtheorem{corollary}[theorem]{Corollary}
\newtheorem{lemma}[theorem]{Lemma}
\newenvironment{proof}[1][Proof]{\noindent\textbf{#1.} }{\ \rule{0.5em}{0.5em}}
\begin{document}

\title{Uniqueness of Ground State Solutions for a Defocusing Hartree
Equation via Inverse Optimal Problems}
\date{}
\author{Yavdat~Il'yasov$^{a}$\thanks{%
E-mail address: ilyasov02@gmail.com (Yavdat~Il'yasov)}, Juntao Sun$^{b}$%
\thanks{%
E-mail address: jtsun@sdut.edu.cn (Juntao Sun)}, Nur~Valeev$^{a}$\thanks{%
E-mail address: valeevnf@mail.ru (Nur~Valeev)}, Shuai Yao$^{b}$\thanks{%
E-mail address: shyao@sdut.edu.cn (Shuai Yao)} \\
%EndAName
$^{a}${\footnotesize \emph{Institute of Mathematic, Ufa Federal Research
Centre, RAS, Ufa 450008, Russia}}\\
$^{b}${\footnotesize \emph{School of Mathematics and Statistics, Shandong
University of Technology, Zibo 255049, PR China}}}
\maketitle

\begin{abstract}
We study a generalized defocusing Hartree equation with nonlocal exchange
potential and repulsive Hartree--Fock interaction. Using an inverse optimal
problem (IOP) approach, we prove the existence and uniqueness of ground
state solutions. Additionally, we establish the existence of principal
solutions, their continuous dependence on parameters, and a dual variational
formulation. The IOP method provides a systematic framework for addressing
inverse problems in nonlocal Schr\"{o}dinger operators and offers new
insights into the structure of solutions for defocusing Hartree-type
equations.
\end{abstract}

\textbf{Keywords:} Hartree equation; Inverse optimal problems; Ground
states; Principal solutions; Uniqueness.

\textbf{2010 Mathematics Subject Classification:} 35P30; 35R30; 35J10; 35J60.

\section{Introduction and statement of main results}

In this paper, we study the following generalized defocusing Hartree
equation
\begin{equation}
-\Delta u+V(x)u-S[\rho ]u+\gamma (|x|^{-\mu }\ast |u|^{2})u=\lambda u,\quad
\forall x\in \mathbb{R}^{N},  \label{eq:M}
\end{equation}%
where the exchange potential is given by
\begin{equation*}
S[\rho ]u(x)=\int_{\mathbb{R}^{N}}\frac{\rho (x,y)\,u(y)}{|x-y|^{\mu }}dy,
\end{equation*}%
the Hartree--Fock term is defined by
\begin{equation*}
(|x|^{-\mu }\ast |u|^{2})(x)=\int_{\mathbb{R}^{N}}\frac{|u(y)|^{2}}{%
|x-y|^{\mu }}dy,
\end{equation*}%
and the external potential $V\in C(\mathbb{R}^{N})$ satisfies the conditions
\begin{equation}
\lim_{|x|\rightarrow +\infty }V(x)=+\infty ,\qquad \text{and}\qquad V(x)\geq
a_{V}>0,\quad \forall x\in \mathbb{R}^{N},  \label{cond:V}
\end{equation}%
for some constant $a_{V}>0$. Here $(-\Delta )$ is the standard Laplacian, $%
N\geq 1$, $\lambda \in \mathbb{R}$, $\gamma >0$, and $0\leq \mu <\min
\{N,4\} $.

We assume that the interaction kernel $\rho$ is symmetric, $\rho(x,y) =
\rho(y,x)$, and belongs to the space $\mathbb{L}_\mu^2(\mathbb{R}^N \times
\mathbb{R}^N)$ with finite norm
\begin{equation*}
\| \rho \|_{\mathbb{L}_{\mu}^{2}}^{2} := \int_{\mathbb{R}^{N}} \int_{\mathbb{%
R}^{N}} \frac{|\rho(x,y)|^{2}}{|x-y|^{\mu}}dxdy < +\infty.
\end{equation*}

Equation (\ref{eq:M}) is considered in the distributional sense, and we seek
weak solutions in the space $W:=W^{1,2}(\mathbb{R}^{N})\cap L_{V}^{2}(%
\mathbb{R}^{N})$ with the norm
\begin{equation*}
\Vert u\Vert _{W}=\left[ \int_{\mathbb{R}^{N}}(|\nabla u|^{2}+V(x)|u|^{2})dx%
\right] ^{1/2},
\end{equation*}%
where $W^{1,2}(\mathbb{R}^{N})$ denotes the standard Sobolev space, and
\begin{equation*}
L_{V}^{2}:=\left\{ u\in L^{2}(\mathbb{R}^{N})\mid \Vert u\Vert
_{L_{V}^{2}}^{2}:=\int_{\mathbb{R}^{N}}V(x)u^{2}dx<+\infty \right\} .
\end{equation*}%
Equation (\ref{eq:M}) has a variational form with the energy functional $%
E_{\lambda }\in C^{1}(W)$ given by
\begin{eqnarray*}
E_{\lambda }(u) &=&\frac{1}{2}\left( \int_{\mathbb{R}^{N}}|\nabla
u|^{2}dx+\int_{\mathbb{R}^{N}}V|u|^{2}dx-\int_{\mathbb{R}^{N}}S[\rho
]u^{2}dx-\lambda \int_{\mathbb{R}^{N}}|u|^{2}dx\right) \\
&&+\frac{1}{4}\int_{\mathbb{R}^{N}}\left( |x|^{-\mu }\ast |u|^{2}\right)
|u|^{2}dx.
\end{eqnarray*}%
We define a weak solution $u_{\lambda }\in W$ of equation (\ref{eq:M}) as a
\textit{ground state} if $E_{\lambda }(u_{\lambda })\leq E_{\lambda }(w)$
holds for every weak solution $w\in W$ of equation (\ref{eq:M}). A solution $%
u_{\lambda }\in W$ of equation (\ref{eq:M}) is said to be \textit{stable} if
$D_{uu}E_{\lambda }(\hat{u}_{g})(h,h)\geq 0$ for any $h\in W$ (cf. \cite%
{Dupaigne}). Hereafter, $D_{u}E_{\lambda }(u)(\cdot )$, $u\in W$ denotes the
Fr\'{e}chet differential of $E_{\lambda }(u)$.

Under condition (\ref{cond:V}) and assuming that $\rho \in \mathbb{L}_{\mu
}^{2}$, the general theory of linear operators (see, e.g., \cite{Kato,
SimonReed}) implies that the operator
\begin{equation*}
\mathcal{L}[\rho ]:=-\Delta +V-S[\rho ],
\end{equation*}%
with domain $D(\mathcal{L}[\rho ]):=W^{2,2}(\mathbb{R}^{N})$, defines a
self-adjoint operator. Consequently, its spectrum consists of an infinite
sequence of eigenvalues $\{\lambda _{i}(\rho )\}_{i=1}^{\infty }$, ordered
as
\begin{equation*}
\lambda _{1}(\rho )\leq \lambda _{2}(\rho )\leq \dots \leq \lambda _{k}(\rho
)\leq \dots ,\quad \lim_{k\rightarrow \infty }\lambda _{k}(\rho )=+\infty .
\end{equation*}%
Furthermore, the principal eigenvalue satisfies
\begin{equation}
-\infty <\lambda _{1}(\rho )=\inf_{\psi \in C_{0}^{\infty }(\mathbb{R}%
^{N})\setminus \{0\}}\frac{(\mathcal{L}[\rho ]\psi ,\psi )}{(\psi ,\psi )}.
\label{eq:lambda1}
\end{equation}

We focus on the so-called principal solutions of the equation. For given $%
\lambda \in \mathbb{R}$ and $\rho \in \mathbb{L}_{\mu }^{2}$, a weak
solution $\hat{u}:=\hat{u}(\lambda ,\rho )\in W$ of equation (\ref{eq:M}) is
called \textit{principal} if
\begin{equation*}
\phi _{1}(x):=\frac{\hat{u}(x)}{\Vert \hat{u}\Vert _{L^{2}}},\text{ }\forall
x\in \mathbb{R}^{N},
\end{equation*}%
is the principal eigenfunction of the nonlocal Schr\"{o}dinger operator $%
\mathcal{L}[\hat{\rho}]$ associated with eigenvalue $\lambda $, with
\begin{equation*}
\hat{\rho}(x,y):=\rho (x,y)-\hat{u}(x)\hat{u}(y),\quad \text{a.e. in }%
\mathbb{R}^{N}\times \mathbb{R}^{N}.
\end{equation*}

Equation (\ref{eq:M}), for various values of its parameters (including $\rho
\equiv 0$ and $\mu =0$), models systems with long-range interactions and
belongs to a wide class of nonlinear Schr\"{o}dinger equations with nonlocal
interactions \cite{Benguria, Bethe, Hartree, Kohn, Lions, LiebSimon,
Penrose, Slater}. In the case $\gamma <0$, this equation is known as the
focusing equation, also called the Choquard and Hartree--Fock equation or
the Schr\"{o}dinger--Newton equation, and it has been extensively studied
over the past several decades (see the surveys \cite{Arora, LeLions,
MorozSurv} and the references therein).

The defocusing equation (\ref{eq:M}), corresponding to the case $\gamma >0$,
constitutes a variant of the Hartree equation \cite{Hartree,Hartree57,
Lions87} that incorporates a nonlocal repulsive (defocusing) interaction
term. Unlike the focusing case, these equations have been extensively
studied in mathematical physics as models for dispersive wave dynamics,
where solutions typically spread out rather than concentrate or exhibit
blow-up (see, e.g., \cite{Benguria, Bourgain, Cao, Ginibre, Miao2007, Tao,
Wang}). They consist of intricate nonlinear partial differential equations
involving nonlocal potentials, combining the mean-field framework of
Hartree--Fock theory with repulsive interactions, and give rise to a rich
variety of dynamical behaviors that remain an active area of research in
nonlinear analysis. The present work will further demonstrate that the
investigation of such Hartree-type equations is relevant to the study of
inverse spectral problems \cite{chadan, glad}.

This paper explores Hartree-type equations by leveraging the inverse optimal
problem approach proposed by Ilyasov and Valeev in \cite{ValIl_JDE,
ValIl_PhysD, ValIl2025, ValeevIl2018}. This method is characterized by the
following key attributes: (1) it offers a specialized technique for
constructing solutions; (2) it enables solutions to be expressed via
variational formulas linked to inverse optimal problems; and (3) it develops
a dedicated methodology for proving the uniqueness of solutions to nonlinear
differential equations. Research on inverse spectral problems has primarily
centered on Sturm-Liouville operators (see, e.g., \cite{ValIl2025, Qi, Wei,
ZhangM}). Notably, the inverse nodal problem for Sturm-Liouville operators
has also received attention in recent literature \cite{CMX,HWXZ}. This class
of inverse problems for nonlocal operators has not been previously studied.

Establishing uniqueness for equations with nonlocal interactions presents
significant challenges. Classical techniques, such as symmetric decreasing
rearrangement and the moving plane method, are inapplicable due to the
presence of the exchange potential $S[\rho ]$ and the nonlocal defocusing
term $\left( |x|^{-\mu }\ast |u|^{2}\right) u$ (cf. \cite{Brascamp, Kwong,
Lieb}). Additionally, the associated energy functional $E_{\lambda }(u)$
lacks convexity (see, e.g., \cite{ChoquardSt}). Moreover, it fails to
exhibit phase invariance, as $E_{\lambda }(|u|)\neq E_{\lambda }(u)$ is
possible for some $u\in W$.

A central component of this approach is the following \textit{inverse
optimal problem} (IOP):

\medskip \noindent \textit{($\mathcal{P}$): given $\lambda \in \mathbb{R}$
and $\bar{\rho}\in \mathbb{L}_{\mu }^{2}$, find a minimizer $\hat{\rho}:=%
\hat{\rho}(\lambda ,\bar{\rho})\in \mathbb{L}_{\mu }^{2}$ such that $\lambda
=\lambda _{1}(\rho )$ and}
\begin{equation}
\hat{\mathcal{P}}(\lambda ,\bar{\rho}):=\Vert \bar{\rho}-\hat{\rho}\Vert _{%
\mathbb{L}_{\mu }^{2}}^{2}=\min \left\{ \Vert \bar{\rho}-\rho \Vert _{%
\mathbb{L}_{\mu }^{2}}^{2}\mid \rho \in \mathbb{L}_{\mu }^{2},\ \lambda
=\lambda _{1}(\rho )\right\} .  \label{eq:p}
\end{equation}%
Obviously, $\hat{\rho}=\bar{\rho}$ and $\hat{\mathcal{P}}(\lambda ,\bar{\rho}%
)=0$ if $\lambda =\lambda _{1}(\bar{\rho})$, and if $\lambda \leq \lambda
_{1}(\bar{\rho})$, equation (\ref{eq:M}) has no nontrivial weak solution.

Our first theorem establishes the existence of a principal solution and
relates it to the minimizer of the IOP.

\begin{theorem}
\label{thm1} Assume that $0 < \mu < \min\{N, 4\}$, $\bar{\rho} \in \mathbb{L}%
_\mu^2$, and $\lambda > \lambda_1(\bar{\rho})$. Then:

\begin{enumerate}
\item[$(i)$] Equation (\ref{eq:M}) admits a \emph{principal solution} $\hat{u%
}\in W$.

\item[$(ii)$] There exists a \emph{unique} minimizer $\hat{\rho}$ of the
inverse optimal problem (\ref{eq:p}), and it can be recovered pointwise
almost everywhere via
\begin{equation*}
\hat{\rho}(x,y)=\bar{\rho}(x,y)-\hat{u}(x)\hat{u}(y)\quad \text{a.e. in }%
\mathbb{R}^{N}\times \mathbb{R}^{N}.
\end{equation*}

\item[$(iii)$] The principal solution $\hat{u}\in W$ is uniquely determined
almost everywhere (up to its sign) by
\begin{equation*}
|\hat{u}(x)|=\sqrt{\bar{\rho}(x,x)-\hat{\rho}(x,x)}\quad \text{a.e. in }%
\mathbb{R}^{N}.
\end{equation*}
\end{enumerate}
\end{theorem}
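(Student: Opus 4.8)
The plan is to convert the inverse optimal problem $(\mathcal{P})$ into a direct minimization and to extract from its minimizer a principal solution, then to run the argument in reverse to establish uniqueness. First I would introduce the constraint set $\mathcal{A}_\lambda := \{\rho \in \mathbb{L}_\mu^2 \mid \lambda = \lambda_1(\rho)\}$ and observe that, by the variational characterization \eqref{eq:lambda1}, the map $\rho \mapsto \lambda_1(\rho)$ is concave and continuous on $\mathbb{L}_\mu^2$ (it is an infimum of the affine-in-$\rho$ functionals $\psi \mapsto (\mathcal{L}[\rho]\psi,\psi)/(\psi,\psi)$, and the perturbation $S[\rho]$ depends boundedly on $\rho$ via the Hardy–Littlewood–Sobolev inequality with $0<\mu<\min\{N,4\}$). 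Hence $\mathcal{A}_\lambda$ is a closed set, and since $\lambda > \lambda_1(\bar\rho)$ it is nonempty (for instance $\lambda_1$ increases without bound as one subtracts suitable rank-one pieces $-t\,\psi(x)\psi(y)$ from $\bar\rho$, by continuity of $t \mapsto \lambda_1(\bar\rho - t\,\psi\otimes\psi)$ and its behavior as $t\to\pm\infty$). The functional $\rho \mapsto \|\bar\rho - \rho\|_{\mathbb{L}_\mu^2}^2$ is coercive and weakly lower semicontinuous on the Hilbert space $\mathbb{L}_\mu^2$, so a minimizing sequence $\{\rho_n\}$ is bounded and, along a subsequence, converges weakly to some $\hat\rho$; the main point is to show $\hat\rho \in \mathcal{A}_\lambda$, i.e. that the constraint survives weak limits. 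This is where the concavity of $\lambda_1$ helps: $\limsup_n \lambda_1(\rho_n) \le \lambda_1(\hat\rho)$ fails in general under weak convergence, but using the compact embedding $W \hookrightarrow L^2(\mathbb{R}^N)$ forced by \eqref{cond:V} one obtains that the principal eigenfunctions $\psi_n$ of $\mathcal{L}[\rho_n]$ converge strongly in $L^2$, and combined with weak convergence of $\rho_n$ in $\mathbb{L}_\mu^2$ this passes to the limit in the quadratic form $(S[\rho_n]\psi_n,\psi_n)$, yielding $\lambda_1(\hat\rho) \le \lambda$; the reverse inequality $\lambda_1(\hat\rho) \ge \lambda$ follows from lower semicontinuity of $\lambda_1$ along the minimizing sequence together with a no-dropping argument for the constraint. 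So $\hat\rho$ is a minimizer.

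Next I would derive the Euler–Lagrange condition for $\hat\rho$. Minimizing $\|\bar\rho - \rho\|^2$ over the level set $\{\lambda_1(\rho) = \lambda\}$, the first-order optimality condition (Lagrange multiplier form, valid because $\lambda_1$ is concave hence its superdifferential is nonempty and the constraint qualification holds away from the degenerate point $\lambda = \lambda_1(\bar\rho)$) states that $\bar\rho - \hat\rho$ is proportional to a subgradient of $\lambda_1$ at $\hat\rho$. By the Feynman–Hellmann-type formula for the principal eigenvalue, any such subgradient equals $-\,\phi_1(x)\phi_1(y)$ where $\phi_1$ is the ($L^2$-normalized) principal eigenfunction of $\mathcal{L}[\hat\rho]$ — this is exactly the derivative of $\lambda_1(\rho) = \inf_\psi (\mathcal{L}[\rho]\psi,\psi)/(\psi,\psi)$ in the direction of $\rho$, since $\partial(S[\rho]\psi,\psi)/\partial\rho = \psi(x)\psi(y)$ in $\mathbb{L}_\mu^2$. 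Therefore $\bar\rho(x,y) - \hat\rho(x,y) = c\,\phi_1(x)\phi_1(y)$ for some constant $c \ge 0$; setting $\hat u := \sqrt{c}\,\phi_1$ gives $\bar\rho - \hat\rho = \hat u \otimes \hat u$, which is precisely statement $(ii)$, and then $\hat\rho(x,y) = \bar\rho(x,y) - \hat u(x)\hat u(y) = \bar\rho(x,y) - \hat\rho_{\text{defined}}(x,y)$ matches the definition of a principal solution: $\phi_1 = \hat u/\|\hat u\|_{L^2}$ is the principal eigenfunction of $\mathcal{L}[\hat\rho]$ with eigenvalue $\lambda$, which unfolds exactly into equation \eqref{eq:M} being satisfied by $\hat u$ in the weak sense. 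That establishes $(i)$.

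For the uniqueness in $(ii)$, I would use strict convexity: $\rho \mapsto \|\bar\rho - \rho\|_{\mathbb{L}_\mu^2}^2$ is strictly convex on the Hilbert space $\mathbb{L}_\mu^2$, while $\mathcal{A}_\lambda = \{\lambda_1(\rho) = \lambda\}$ is the boundary of the convex superlevel set $\{\lambda_1(\rho) \ge \lambda\}$ (convex because $\lambda_1$ is concave). A strictly convex functional can have at most one minimizer over a convex set, and since $\lambda > \lambda_1(\bar\rho)$ the point $\bar\rho$ lies strictly outside $\{\lambda_1 \ge \lambda\}$, so the minimizer is the metric projection of $\bar\rho$ onto this closed convex set — unique by the Hilbert projection theorem. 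Finally $(iii)$ is immediate from $(ii)$: restricting $\bar\rho(x,y) - \hat\rho(x,y) = \hat u(x)\hat u(y)$ to the diagonal $y = x$ gives $\bar\rho(x,x) - \hat\rho(x,x) = \hat u(x)^2 \ge 0$, hence $|\hat u(x)| = \sqrt{\bar\rho(x,x) - \hat\rho(x,x)}$ a.e., and $\hat u$ is thereby determined up to sign; here one should remark that the diagonal restriction is meaningful because the representation holds for the specific representatives of $\bar\rho, \hat\rho$ that are continuous (or at least well-defined on the diagonal) once the rank-one correction $\hat u \otimes \hat u$ with $\hat u \in W$ is factored out, using elliptic regularity to give $\hat u$ enough pointwise regularity.

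The main obstacle I anticipate is the passage to the limit in the constraint $\lambda_1(\rho_n) = \lambda$ under only weak convergence in $\mathbb{L}_\mu^2$: one must rule out the possibility that the principal eigenvalue jumps in the limit (either because the infimum in \eqref{eq:lambda1} is not attained or because the eigenfunctions $\psi_n$ lose mass at infinity). The condition \eqref{cond:V}, which forces $\mathcal{L}[\rho]$ to have compact resolvent and hence a genuine $L^2$ eigenfunction $\phi_1$ with an $L^2$-precompact family as $\rho$ varies in a bounded set, is exactly what is needed to close this gap — but verifying the requisite uniform-in-$n$ decay estimates and the strong $L^2$ convergence of $\psi_n$ is the technical heart of the proof.
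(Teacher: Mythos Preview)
Your proposal follows essentially the same route as the paper: existence of a minimizer for the IOP, then a Lagrange/Kuhn--Tucker first-order condition combined with the Feynman--Hellmann derivative of $\lambda_1$ to extract the rank-one structure $\bar\rho - \hat\rho = c\,\phi_1\otimes\phi_1$ and hence a principal solution $\hat u = \sqrt{c}\,\phi_1$, then uniqueness via strict convexity/Hilbert projection, then $(iii)$ by restriction to the diagonal.

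The one place where the paper is cleaner concerns precisely what you flag as the ``main obstacle''. The paper does not minimize over the level set $\mathcal{A}_\lambda = \{\lambda_1(\rho) = \lambda\}$ but immediately relaxes to the superlevel set $\tilde M_\lambda = \{\lambda_1(\rho) \ge \lambda\}$, which is convex (since $\lambda_1$ is concave) and weakly closed. The point you seem to have inverted is that a concave, strongly upper semicontinuous functional on a Banach space is automatically \emph{weakly} upper semicontinuous; hence $\lambda_1(\hat\rho) \ge \limsup_n \lambda_1(\rho_n) \ge \lambda$ holds directly along any weakly convergent minimizing sequence in $\tilde M_\lambda$, with no eigenfunction-compactness argument needed at this stage. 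Your sentence ``$\limsup_n \lambda_1(\rho_n) \le \lambda_1(\hat\rho)$ fails in general under weak convergence'' is therefore backwards: this is exactly the inequality that concavity buys you, and your later appeal to ``lower semicontinuity of $\lambda_1$'' for the inequality $\lambda_1(\hat\rho)\ge\lambda$ is a mislabeling of the same fact. One then checks a posteriori (as you effectively do in your uniqueness paragraph) that the minimizer over $\tilde M_\lambda$ lies on the boundary $\partial\tilde M_\lambda = \mathcal{A}_\lambda$, so the relaxed and original problems coincide. With this observation your anticipated obstacle disappears and the existence step becomes a routine projection onto a closed convex set.
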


Hereafter, a function $w \in L^1_{\text{loc}}(\mathbb{R}^N)$ is said to be
\emph{positive} (respectively, \emph{negative}) in $\mathbb{R}^N$ if $w(x) >
0$ (respectively, $w(x) < 0$) for almost every $x \in \mathbb{R}^N$.

Our second theorem concerns the existence and uniqueness of a ground state
when the interaction kernel is nonnegative.

\begin{theorem}
\label{thm2} Assume that $0<\mu <\min \{N,4\}$, $\bar{\rho}\in \mathbb{L}%
_{\mu }^{2}$, and $\lambda >\lambda _{1}(\bar{\rho})$. Then equation (\ref%
{eq:M}) admits a stable \emph{ground state}. Moreover, if $\bar{\rho}\geq 0$
a.e. in $\mathbb{R}^{N}\times \mathbb{R}^{N}$, the following hold:

\begin{enumerate}
\item[$(i)$] The ground state is unique and positive. It can be recovered
pointwise almost everywhere via
\begin{equation*}
\hat{u}(x) = \sqrt{\bar{\rho}(x,x) - \hat{\rho}(x,x)} \quad \text{a.e. in }
\mathbb{R}^N.
\end{equation*}

\item[$(ii)$] The principal solution $\hat{u}\in W$ is positive and
coincides with the ground state of equation (\ref{eq:M}).
\end{enumerate}
\end{theorem}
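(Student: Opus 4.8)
The plan is to produce a stable ground state as a global minimizer of $E_\lambda$ on $W$, and then, under the hypothesis $\bar\rho\ge0$, to show that \emph{every} ground state is a principal solution, so that uniqueness and the explicit formulas are inherited from Theorem~\ref{thm1}.

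\emph{Step 1 (a stable ground state exists, for every $\bar\rho$).} Evaluating $E_\lambda$ along $\{t\varphi\}_{t>0}$, where $\varphi$ is the principal eigenfunction of $\mathcal L[\bar\rho]$, gives $E_\lambda(t\varphi)=\tfrac{t^2}{2}(\lambda_1(\bar\rho)-\lambda)\|\varphi\|_{L^2}^2+\tfrac{t^4}{4}\int(|x|^{-\mu}\ast\varphi^2)\varphi^2$, which is negative for small $t>0$ since $\lambda>\lambda_1(\bar\rho)$; hence $m:=\inf_W E_\lambda<0=E_\lambda(0)$. If $\{u_n\}$ satisfies $\sup_n E_\lambda(u_n)<\infty$ and $t_n:=\|u_n\|_W\to\infty$, put $v_n:=u_n/t_n$ and extract $v_n\rightharpoonup v$ in $W$, together with $v_n\to v$ strongly in $L^2(\mathbb R^N)$ and in $L^{4N/(2N-\mu)}(\mathbb R^N)$ (compact embeddings, valid because $V(x)\to+\infty$ and $4N/(2N-\mu)$ is a subcritical Sobolev exponent — here $0<\mu<\min\{N,4\}$ is used); using $\int(|x|^{-\mu}\ast|u|^2)|u|^2\ge0$,
\begin{equation*}
E_\lambda(u_n)\ \ge\ \frac{t_n^2}{2}\Bigl(1-\int_{\mathbb R^N}S[\bar\rho]v_n^2\,dx-\lambda\|v_n\|_{L^2}^2\Bigr)+\frac{t_n^4}{4}\int_{\mathbb R^N}\bigl(|x|^{-\mu}\ast|v_n|^2\bigr)|v_n|^2\,dx .
\end{equation*}
By Hardy--Littlewood--Sobolev, $v_n\otimes v_n\to v\otimes v$ in $\mathbb L^2_\mu$, so the bracket converges to $1-\int S[\bar\rho]v^2-\lambda\|v\|_{L^2}^2$ and the last integral to $\int(|x|^{-\mu}\ast|v|^2)|v|^2$; whether $v\ne0$ (the $t_n^4$-term blows up) or $v=0$ (the bracket tends to $1>0$, the $t_n^2$-term blows up), we get $E_\lambda(u_n)\to+\infty$, a contradiction. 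Hence $E_\lambda$ is bounded below and every minimizing sequence is bounded in $W$; by the same compactness every term of $E_\lambda$ except $\|\cdot\|_W^2$ passes to the weak limit while $\|\cdot\|_W^2$ is weakly lower semicontinuous, so $m$ is attained at some $\hat u_g\in W$. As a critical point, $\hat u_g$ is a weak solution of (\ref{eq:M}) (with $\rho=\bar\rho$); since it minimizes $E_\lambda$ over all of $W$ and weak solutions lie in $W$, it is a ground state and the ground-state energy equals $m$; and, being a global minimizer, $\hat u_g$ satisfies the second-order necessary condition $D_{uu}E_\lambda(\hat u_g)(h,h)\ge0$ for all $h\in W$, i.e.\ it is stable.

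\emph{Step 2 (positivity and uniqueness for $\bar\rho\ge0$).} For $\bar\rho\ge0$ all terms of $E_\lambda$ depend only on $|u|$ except the exchange term, and since $|u(x)||u(y)|\ge u(x)u(y)$,
\begin{equation*}
E_\lambda(|u|)-E_\lambda(u)=-\frac12\int_{\mathbb R^N}\!\!\int_{\mathbb R^N}\frac{\bar\rho(x,y)\bigl(|u(x)||u(y)|-u(x)u(y)\bigr)}{|x-y|^\mu}\,dx\,dy\ \le\ 0 ,
\end{equation*}
so $m$ is attained at a nonnegative minimizer, again denoted $\hat u_g$. By elliptic regularity (after a bootstrap for weak solutions of (\ref{eq:M}), whose exchange and Hartree--Fock terms lie in suitable Lebesgue spaces) and the strong maximum principle, $\hat u_g>0$ and is continuous. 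Using the pointwise identity $(|x|^{-\mu}\ast|u|^2)u=S[u\otimes u]u$, a function $u\in W$ solves (\ref{eq:M}) (with $\rho=\bar\rho$) if and only if $\mathcal L[\rho_u]u=\lambda u$, where $\rho_u:=\bar\rho-u\otimes u\in\mathbb L^2_\mu$; applied to $\hat u_g$, the fact that a positive eigenfunction of the self-adjoint operator $\mathcal L[\rho_{\hat u_g}]$ can only be its principal one (the principal eigenvalue being simple with a positive eigenfunction) gives $\lambda=\lambda_1(\rho_{\hat u_g})$ and that $\hat u_g/\|\hat u_g\|_{L^2}$ is its principal eigenfunction — i.e.\ $\hat u_g$ is a \emph{principal solution} of (\ref{eq:M}). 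By Theorem~\ref{thm1}$(ii)$--$(iii)$ the principal solution is unique up to sign, $\rho_{\hat u_g}$ equals the unique minimizer $\hat\rho$ of the inverse optimal problem (\ref{eq:p}), and, taking the positive representative, $\hat u_g=\hat u=\sqrt{\bar\rho(\cdot,\cdot)-\hat\rho(\cdot,\cdot)}$, which is $(i)$. Finally, if $v$ is any ground state then $E_\lambda(v)=m$, so $v$ and $|v|$ both globally minimize $E_\lambda$ on $W$; the argument just given applied to $|v|$ shows it is a positive principal solution, hence $|v|=\hat u$. Since $v,\hat u\in W^{1,2}(\mathbb R^N)$ and $\hat u>0$ is continuous, $\sigma:=v/\hat u\in W^{1,1}_{\mathrm{loc}}(\mathbb R^N)$ is $\{-1,1\}$-valued, so $\nabla\sigma=0$ a.e.\ and $\sigma\equiv\pm1$ on the connected set $\mathbb R^N$; thus $v=\pm\hat u$. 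Hence the ground state is unique up to sign, its positive representative is $\hat u$, and it coincides with the principal solution of Theorem~\ref{thm1}, which is $(ii)$.

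\emph{Main obstacle.} The two technically delicate points are the compactness input in Step~1 — the compact embedding $W\hookrightarrow L^{4N/(2N-\mu)}$ and, consequently, $v_n\otimes v_n\to v\otimes v$ in $\mathbb L^2_\mu$, so that the nonlocal exchange and Hartree--Fock terms pass to the weak limit; this is precisely where $0<\mu<\min\{N,4\}$ enters, through Hardy--Littlewood--Sobolev — and the positivity step in Step~2, since the strong maximum principle must be applied to an equation carrying the \emph{nonlocal} potential $S[\bar\rho]u$, which first requires a regularity bootstrap for weak solutions of (\ref{eq:M}). The conceptual core, by contrast, is short once Theorem~\ref{thm1} is available: $\bar\rho\ge0$ forces every ground state to be of constant sign, hence a principal solution, so the uniqueness of ground states is inherited from the (convexity-driven) uniqueness of the inverse optimal problem.
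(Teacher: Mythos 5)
Your Step 1 (coercivity, weak lower semicontinuity, global minimization, stability via the second variation) is essentially the paper's Proposition~\ref{propGS}, so there is nothing to flag there. Your Step 2, however, diverges from the paper's argument at two points, one of which is a cosmetic variant and one of which is a genuine gap.

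On the cosmetic side, to obtain a sign-definite minimizer you use $E_\lambda(|u|)\le E_\lambda(u)$ when $\bar\rho\ge0$ and then, at the end, identify the sign of an arbitrary ground state $v$ via the locally constant function $\sigma=v/\hat u$. The paper instead argues by contradiction with the decomposition $u=u^++u^-$. Both are legitimate; your route requires the extra regularity argument at the end, the paper's route decides the sign of the minimizer directly.

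The substantive gap is the step ``$\rho_{\hat u_g}$ equals the unique minimizer $\hat\rho$ of the inverse optimal problem.'' You deduce this by appealing to Theorem~\ref{thm1}(ii)--(iii), but that theorem only produces \emph{one} principal solution $\hat u$ satisfying $\hat\rho=\bar\rho-\hat u\otimes\hat u$; it does not, by itself, show that \emph{every} principal solution $w$ satisfies $\bar\rho-w\otimes w=\hat\rho$. Nothing in Section~3 of the paper rules out two distinct positive weak solutions $w_1\neq w_2$, each of which is the principal eigenfunction of its own $\mathcal L[\bar\rho-w_i\otimes w_i]$ with eigenvalue $\lambda$, and only one of which can correspond to the IOP minimizer. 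The missing ingredient is precisely the paper's Proposition~\ref{Prop:4}: for any nonnegative weak solution $\tilde u$, the kernel $\tilde\rho=\bar\rho-\tilde u\otimes\tilde u$ is a \emph{local} minimizer of $\mathcal P(\theta)=\|\theta-\bar\rho\|_{\mathbb L^2_\mu}^2$ on $M_\lambda$; combined with the strict convexity of $\mathcal P$ and of $\tilde M_\lambda$ (Corollary~2.5), local minimizers coincide with the unique global minimizer $\hat\rho$, which is what identifies $\rho_{\hat u_g}$ with $\hat\rho$. Your proof never establishes this local-minimality-plus-convexity argument, and invoking Theorem~\ref{thm1}(iii) for it is, in effect, assuming a uniqueness statement whose justification lives in Section~4 of the paper. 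To close the gap, you would need to reproduce the substance of Proposition~\ref{Prop:4}: differentiate $\lambda_1(\cdot)$ at $\tilde\rho$ via Lemma~\ref{lem10}, use concavity of $\lambda_1$ to show the first-order variation of $\mathcal P$ is nonnegative over $M_\lambda$, and then invoke the strict convexity of the constraint set.

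A final caveat that affects both your proof and the paper's: the Perron-type step ``a positive eigenfunction of $\mathcal L[\rho_{\hat u_g}]$ must be the principal eigenfunction'' relies, via Lemma~\ref{lem:2}(ii), on the kernel $\rho_{\hat u_g}=\bar\rho-\hat u_g\otimes\hat u_g$ being nonnegative — and that is not an obvious consequence of $\bar\rho\ge0$. This is not a deviation from the paper (the paper asserts $\tilde\rho\ge0$ without justification in Proposition~\ref{Prop:4}), but it is a point you should be aware of if you want a self-contained argument.
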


In the following theorem, we show that the set of principal solutions forms,
in a certain sense, a continuous branch of solutions.

\begin{theorem}
\label{thm3} Assume that $0<\mu <\min \{N,4\}$.

\begin{enumerate}
\item[$(i)$] Let $\lambda \geq \lambda _{1}(\bar{\rho})$ and suppose $\rho
_{n}\rightarrow \bar{\rho}$ in $\mathbb{L}_{\mu }^{2}$ as $n\rightarrow
\infty $. Then there exists a subsequence, still denoted by $(\rho _{n})$,
and a solution $\hat{u}(\lambda ,\bar{\rho})$ such that
\begin{equation*}
\hat{u}(\lambda ,\rho _{n})\rightarrow \hat{u}(\lambda ,\bar{\rho})\quad
\text{in }W\quad \text{as }n\rightarrow +\infty .
\end{equation*}%
Moreover:

\begin{itemize}
\item If $\lambda >\lambda _{1}(\bar{\rho})$, then $\hat{u}(\lambda ,\bar{%
\rho})$ is a principal solution.

\item If $\lambda =\lambda _{1}(\bar{\rho})$, then $\hat{u}(\lambda ,\bar{%
\rho})=0$.
\end{itemize}

\item[$(ii)$] Let $\bar{\rho}\in \mathbb{L}_{\mu }^{2}$ and suppose $\lambda
_{n}\rightarrow \lambda \in \lbrack \lambda _{1}(\bar{\rho}),+\infty )$ as $%
n\rightarrow \infty $. Then there exists a subsequence, still denoted by $%
(\lambda _{n})$, and a solution $\hat{u}(\lambda ,\bar{\rho})$ such that
\begin{equation*}
\hat{u}(\lambda _{n},\bar{\rho})\rightarrow \hat{u}(\lambda ,\bar{\rho}%
)\quad \text{in }W\quad \text{as }n\rightarrow +\infty .
\end{equation*}%
Moreover:

\begin{itemize}
\item If $\lambda >\lambda _{1}(\bar{\rho})$, then $\hat{u}(\lambda ,\bar{%
\rho})$ is a principal solution.

\item If $\lambda =\lambda _{1}(\bar{\rho})$, then $\hat{u}(\lambda ,\bar{%
\rho})=0$.
\end{itemize}

\item[$(iii)$] The following continuity properties hold:

\begin{itemize}
\item For any $\lambda \geq \lambda _{1}(\bar{\rho})$, the map $\mathbb{L}%
_{\mu }^{2}\ni \rho \mapsto \hat{u}(\lambda ,\rho )\in W$ is continuous at $%
\bar{\rho}\in \mathbb{L}_{\mu }^{2}$.

\item For any $\bar{\rho}\in \mathbb{L}_{\mu }^{2}$, the map $\hat{u}(\cdot ,%
\bar{\rho})\colon \lbrack \lambda _{1}(\bar{\rho}),+\infty )\rightarrow W$
is continuous.
\end{itemize}
\end{enumerate}
\end{theorem}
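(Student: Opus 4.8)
The three parts rest on a common compactness-and-identification scheme; the plan is to prove $(i)$ and $(ii)$ in detail and then obtain $(iii)$ by a standard contradiction argument. First I would collect the standing tools (most of which should already be available from the earlier sections): the Hardy--Littlewood--Sobolev (HLS) inequality, which for $0<\mu<\min\{N,4\}$ gives $D(u):=\int_{\mathbb{R}^N}(|x|^{-\mu}*|u|^2)|u|^2\,dx=\|u\otimes u\|_{\mathbb{L}_\mu^2}^2\le C\|u\|_{L^{4N/(2N-\mu)}}^4$, where $(u\otimes u)(x,y):=u(x)u(y)$; the Sobolev embedding $W\hookrightarrow L^{2^*}$ together with the interpolation inequality $\|u\|_{L^{4N/(2N-\mu)}}\le\|u\|_{L^2}^{1-\mu/4}\|u\|_{L^{2^*}}^{\mu/4}$; the \emph{compact} embedding $W\hookrightarrow L^2(\mathbb{R}^N)$, valid because $V(x)\to+\infty$; and the continuity of $\rho\mapsto\lambda_1(\rho)$ on $\mathbb{L}_\mu^2$.

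For $(i)$, write $\hat u_n:=\hat u(\lambda,\rho_n)$ (with $\hat u_n=0$ whenever $\lambda\le\lambda_1(\rho_n)$, a case that is trivial); when $\lambda>\lambda_1(\rho_n)$ one has $\hat u_n=t_n\phi_1^n$ with $t_n=\|\hat u_n\|_{L^2}>0$ and $\phi_1^n$ the $L^2$-normalized principal eigenfunction of $\mathcal{L}[\hat\rho_n]$ at eigenvalue $\lambda$, $\hat\rho_n=\rho_n-\hat u_n\otimes\hat u_n$. Testing the eigenvalue identity against $\phi_1^n$ gives $\|\phi_1^n\|_W^2+t_n^2D(\phi_1^n)=\lambda+\int S[\rho_n](\phi_1^n)^2\le\lambda+\|\rho_n\|_{\mathbb{L}_\mu^2}\sqrt{D(\phi_1^n)}$; since $\|\phi_1^n\|_{L^2}=1$, the HLS, Sobolev and interpolation inequalities together bound $\sqrt{D(\phi_1^n)}$ by $C\|\phi_1^n\|_W^{\mu/2}$ with exponent $\mu/2<2$, so Young's inequality (using $\sup_n\|\rho_n\|_{\mathbb{L}_\mu^2}<\infty$) yields a uniform bound $\|\phi_1^n\|_W\le C$. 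Along a subsequence $\phi_1^n\rightharpoonup\phi$ in $W$, and by the compact embedding into $L^2$ together with interpolation against the uniform $L^{2^*}$ bound, $\phi_1^n\to\phi$ in $L^2$ and in $L^{4N/(2N-\mu)}$; hence $\|\phi\|_{L^2}=1$ and $D(\phi_1^n)\to D(\phi)>0$. Then $t_n^2=(\lambda+\int S[\rho_n](\phi_1^n)^2-\|\phi_1^n\|_W^2)/D(\phi_1^n)$ is bounded, so, along a further subsequence, $t_n\to t\ge0$ and $\hat u_n\rightharpoonup\hat u:=t\phi$ in $W$.

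Next I would pass to the limit in the weak formulation of (\ref{eq:M}): the gradient and potential terms converge by weak convergence in $W$; $\int S[\rho_n]\hat u_n v\to\int S[\bar\rho]\hat u v$ because $\rho_n\to\bar\rho$ in $\mathbb{L}_\mu^2$ while $v\otimes\hat u_n\to v\otimes\hat u$ in $\mathbb{L}_\mu^2$ (from $L^{4N/(2N-\mu)}$-convergence and HLS); the cubic Hartree term converges by the same $L^{4N/(2N-\mu)}$-convergence; and $\lambda\int\hat u_n v\to\lambda\int\hat u v$. Hence $\hat u$ is a weak solution of (\ref{eq:M}) with kernel $\bar\rho$. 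To upgrade to strong convergence, testing (\ref{eq:M}) for $\hat u_n$ against $\hat u_n$ gives $\|\hat u_n\|_W^2=\lambda\|\hat u_n\|_{L^2}^2+\int S[\rho_n]\hat u_n^2-D(\hat u_n)$, every term of which converges to the corresponding term in the same identity for $\hat u$ (obtained by testing the limit equation against $\hat u$); thus $\|\hat u_n\|_W\to\|\hat u\|_W$, which with weak convergence in the Hilbert space $W$ yields $\hat u_n\to\hat u$ in $W$. For the dichotomy: if $\lambda=\lambda_1(\bar\rho)$, then, since (\ref{eq:M}) with kernel $\bar\rho$ admits no nontrivial weak solution when $\lambda\le\lambda_1(\bar\rho)$, necessarily $\hat u=0$; if $\lambda>\lambda_1(\bar\rho)$, then $t>0$, for otherwise $\hat u_n\to0$ in $W$ forces $\hat\rho_n=\rho_n-\hat u_n\otimes\hat u_n\to\bar\rho$ in $\mathbb{L}_\mu^2$, hence $\lambda=\lambda_1(\hat\rho_n)\to\lambda_1(\bar\rho)$, a contradiction; in that case $\phi=\hat u/t$ satisfies $\mathcal{L}[\hat\rho]\phi=\lambda\phi$ with $\hat\rho=\bar\rho-\hat u\otimes\hat u$ and $\lambda=\lim_n\lambda_1(\hat\rho_n)=\lambda_1(\hat\rho)$, so $\phi$ is a principal eigenfunction and $\hat u=t\phi$ a principal solution. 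Part $(ii)$ follows by the identical argument with $\rho_n\equiv\bar\rho$ fixed and a bounded sequence $\lambda_n\to\lambda$ taking the place of $\lambda$ throughout.

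For $(iii)$ I would argue by contradiction: were $\rho\mapsto\hat u(\lambda,\rho)$ not continuous at $\bar\rho$, there would be $\rho_n\to\bar\rho$ with $\hat u(\lambda,\rho_n)$ bounded away in $W$ from $\pm\hat u(\lambda,\bar\rho)$; but $(i)$ produces a subsequence converging in $W$ to a principal solution --- or to $0$ when $\lambda=\lambda_1(\bar\rho)$ --- which by the uniqueness in Theorems~\ref{thm1}--\ref{thm2} coincides with $\hat u(\lambda,\bar\rho)$ up to sign, a contradiction; continuity of $\lambda\mapsto\hat u(\lambda,\bar\rho)$ on $[\lambda_1(\bar\rho),+\infty)$ follows in the same way from $(ii)$. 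The step I expect to be the main obstacle is the uniform $W$-bound: it must close with no smallness hypothesis on $\lambda$ or on $\|\rho\|_{\mathbb{L}_\mu^2}$, which is precisely why one normalizes $\phi_1^n$ in $L^2$ and why the interpolation exponent $\mu/2<2$ --- available only because $\mu<4$ --- is essential; a closely related point is keeping $D(\phi_1^n)$ bounded away from $0$, where the compact embedding $W\hookrightarrow L^2$, i.e. the confining potential, is decisive, along with the bookkeeping of the $\lambda>\lambda_1(\bar\rho)$ versus $\lambda=\lambda_1(\bar\rho)$ cases and the harmless sign ambiguity of principal solutions.
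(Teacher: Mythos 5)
Your proposal is correct but proceeds by a genuinely different route than the paper. The paper's proof of Theorem~\ref{thm3} hinges on Lemma~\ref{lem51}, which establishes continuity of the \emph{optimal kernel} map $\bar\rho\mapsto\hat\rho(\lambda,\bar\rho)$ in $\mathbb{L}_\mu^2$ (and similarly in $\lambda$) by exploiting the variational structure of the IOP: upper semicontinuity of $\hat{\mathcal{P}}(\lambda,\cdot)$, weak lower semicontinuity of the norm, and strict convexity of $\tilde M_\lambda$, which yields a unique minimizer and forces the minimizing sequence to converge strongly. It then passes this convergence of $\hat\rho$ through Proposition~\ref{prop3} to obtain $W$-convergence of the normalized eigenfunctions $\phi_1[\hat\rho(\rho_n)]$, and finally closes with a two-sided contradiction argument controlling the amplitudes $t_n$ (bounded away from $0$ and $\infty$). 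You instead bypass Lemma~\ref{lem51} entirely: you establish the required compactness directly at the level of the eigenfunctions, by testing the eigenvalue identity for $\mathcal L[\hat\rho_n]$ against $\phi_1^n$ and closing a uniform $W$-bound via the Hardy--Littlewood--Sobolev estimate $\sqrt{D(\phi_1^n)}\le C\|\phi_1^n\|_W^{\mu/2}$ and Young's inequality (using $\mu/2<2$, which is where $\mu<4$ enters), then pass to the limit in the weak formulation of~(\ref{eq:M}) and upgrade to strong $W$-convergence by matching norms. Both arguments rely on the same ultimate identification step --- $\lambda=\lambda_1(\hat\rho)$ and uniqueness of the principal solution up to sign from Theorems~\ref{thm1}--\ref{thm2} --- and both reduce part~(iii) to a contradiction via (i)--(ii). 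The paper's route has the advantage of producing Lemma~\ref{lem51}, which is reused in the proof of Theorem~\ref{thm4}; your route is more elementary, avoids the abstract IOP continuity lemma, and makes the role of the confinement ($W\hookrightarrow L^2$ compact) and the subcriticality exponent $\mu/2<2$ more visible. One small point worth tightening: to justify $\lambda=\lambda_1(\hat\rho)$ you can avoid invoking full continuity of $\lambda_1$ by combining upper semicontinuity, $\lambda_1(\hat\rho)\ge\limsup_n\lambda_1(\hat\rho_n)=\lambda$, with the fact that $\lambda$ is \emph{some} eigenvalue of $\mathcal L[\hat\rho]$ (since the limit $\phi$ is a nontrivial eigenvector), giving $\lambda_1(\hat\rho)\le\lambda$.
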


The ground state and principal solution of equation (\ref{eq:M}) can also be
expressed via the dual problem (cf. \cite{egnel, EgorKondr, ValIl2025, Qi,
Wei, ZhangM}), defined as follows:
\begin{equation}
\lambda _{1}^{\kappa }(\bar{\rho})=\sup \left\{ \lambda _{1}(\rho )\text{ }|%
\text{\ }\rho \in \mathbb{L}_{\mu }^{2},\ \Vert \bar{\rho}-\rho \Vert _{%
\mathbb{L}_{\mu }^{2}}^{2}=\kappa \right\} .  \label{eq-10}
\end{equation}

\begin{theorem}
\label{thm4} Assume that $0<\mu <\min \{N,4\}$, $\bar{\rho}\in \mathbb{L}%
_{\mu }^{2}$, and $\kappa >0$. Then the following statements hold:

\begin{enumerate}
\item[$(i)$] There exists a \emph{unique} maximizer $\check{\rho}\in \mathbb{%
L}_{\mu }^{2}$ of the dual inverse optimal problem (\ref{eq-10}). This
maximizer can be recovered pointwise almost everywhere via
\begin{equation*}
\check{\rho}(x,y)=\bar{\rho}(x,y)-\hat{u}(x)\hat{u}(y)\quad \text{a.e. in }%
\mathbb{R}^{N}\times \mathbb{R}^{N},  \label{eq:rho_recovery}
\end{equation*}%
where $\hat{u}\in W$ is a principal solution of equation (\ref{eq:M}) with $%
\lambda =\lambda _{1}^{\kappa }(\bar{\rho})$. Moreover, $\check{\rho}=\hat{%
\rho}(\lambda _{1}(\bar{\rho}),\bar{\rho})$ and
\begin{equation*}
\lambda _{1}^{\kappa }(\bar{\rho})=\lambda _{1}(\check{\rho})>\lambda _{1}(%
\bar{\rho}).
\end{equation*}

\item[$(ii)$] The principal solution $\hat{u}\in W$ of equation (\ref{eq:M})
with $\lambda =\lambda _{1}^{\kappa }(\bar{\rho})$ is uniquely determined
almost everywhere (up to its sign) by
\begin{equation*}
|\hat{u}(x)|=\sqrt{\bar{\rho}(x,x)-\check{\rho}(x,x)}\quad \text{a.e. in }%
\mathbb{R}^{N}.  \label{eq:principal_solution}
\end{equation*}

\item[$(iii)$] If $\bar{\rho}\geq 0$ almost everywhere in $\mathbb{R}%
^{N}\times \mathbb{R}^{N}$, then the ground state $\hat{u}\in W$ of equation
(\ref{eq:M}) with $\lambda =\lambda _{1}^{\kappa }(\bar{\rho})$ can be
recovered pointwise almost everywhere via
\begin{equation*}
\hat{u}(x)=\sqrt{\bar{\rho}(x,x)-\check{\rho}(x,x)}\quad \text{a.e. in }%
\mathbb{R}^{N}.  \label{eq:ground_state_recovery}
\end{equation*}
\end{enumerate}
\end{theorem}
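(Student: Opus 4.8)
\medskip\noindent\textbf{Proof strategy (sketch).} The approach I would take is to recognize (\ref{eq-10}) as the \emph{inverse} of the primal inverse optimal problem $(\mathcal{P})$ already solved in Theorem~\ref{thm1}, and to transport existence, uniqueness and the recovery formulas from there. Introduce the value function $g(\lambda):=\hat{\mathcal{P}}(\lambda,\bar\rho)=\Vert\bar\rho-\hat\rho(\lambda,\bar\rho)\Vert_{\mathbb{L}_{\mu}^{2}}^{2}$ for $\lambda\in[\lambda_{1}(\bar\rho),+\infty)$. By Theorem~\ref{thm1}$(ii)$ the kernel $\bar\rho(x,y)-\hat\rho(\lambda,\bar\rho)(x,y)$ equals $\hat u(\lambda,\bar\rho)(x)\,\hat u(\lambda,\bar\rho)(y)$, so
\[
g(\lambda)=\int_{\mathbb{R}^{N}}\big(|x|^{-\mu}\ast|\hat u(\lambda,\bar\rho)|^{2}\big)|\hat u(\lambda,\bar\rho)|^{2}\,dx=:D\big(\hat u(\lambda,\bar\rho)\big).
\]
I would first establish three properties of $g$. \emph{(a)} $g(\lambda_{1}(\bar\rho))=0$: this is the remark following the definition of $(\mathcal{P})$ together with $\hat u(\lambda_{1}(\bar\rho),\bar\rho)=0$ (Theorem~\ref{thm3}). \emph{(b)} $g$ is continuous on $[\lambda_{1}(\bar\rho),+\infty)$: the map $\lambda\mapsto\hat u(\lambda,\bar\rho)\in W$ is continuous by Theorem~\ref{thm3}$(iii)$, while $D\colon W\to\mathbb{R}$ is continuous by the Hardy--Littlewood--Sobolev inequality and the embedding $W\hookrightarrow L^{4N/(2N-\mu)}$, which is available precisely because $0<\mu<\min\{N,4\}$; since $D$ is invariant under $u\mapsto-u$ and $|\hat u(\lambda,\bar\rho)|$ is uniquely determined (Theorem~\ref{thm1}$(iii)$), a routine subsequence argument upgrades the subsequential convergence of Theorem~\ref{thm3} to continuity of the scalar map $g$. \emph{(c)} $g(\lambda)\to+\infty$ as $\lambda\to+\infty$: since $\lambda=\lambda_{1}(\hat\rho(\lambda,\bar\rho))$, inserting a fixed $\psi_{0}\in C_{0}^{\infty}(\mathbb{R}^{N})\setminus\{0\}$ into (\ref{eq:lambda1}) and using the Cauchy--Schwarz inequality in $\mathbb{L}_{\mu}^{2}$ with $\Vert\hat\rho(\lambda,\bar\rho)\Vert_{\mathbb{L}_{\mu}^{2}}\le\Vert\bar\rho\Vert_{\mathbb{L}_{\mu}^{2}}+\sqrt{g(\lambda)}$ yields $\lambda\Vert\psi_{0}\Vert_{L^{2}}^{2}\le\Vert\psi_{0}\Vert_{W}^{2}+\big(\Vert\bar\rho\Vert_{\mathbb{L}_{\mu}^{2}}+\sqrt{g(\lambda)}\big)\sqrt{D(\psi_{0})}$, hence $\sqrt{g(\lambda)}\to+\infty$. (In fact $g$ is strictly increasing, by concavity of $\rho\mapsto\lambda_{1}(\rho)$ and the minimality in (\ref{eq:p}), so it is a homeomorphism of $[\lambda_{1}(\bar\rho),+\infty)$ onto $[0,+\infty)$; this is convenient but not needed.)

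With \emph{(a)}--\emph{(c)} I would set $\lambda^{*}:=\sup\{\lambda\ge\lambda_{1}(\bar\rho):g(\lambda)\le\kappa\}$. By \emph{(a)} and continuity $\lambda^{*}>\lambda_{1}(\bar\rho)$, by \emph{(c)} $\lambda^{*}<+\infty$, and since $\{g\le\kappa\}$ is closed while $g(\lambda^{*})<\kappa$ would contradict the definition of $\lambda^{*}$, one gets $g(\lambda^{*})=\kappa$. I then prove the two halves of the duality. \emph{Upper bound $\lambda_{1}^{\kappa}(\bar\rho)\le\lambda^{*}$}: if $\rho\in\mathbb{L}_{\mu}^{2}$ with $\Vert\bar\rho-\rho\Vert_{\mathbb{L}_{\mu}^{2}}^{2}=\kappa$ and $\lambda_{1}(\rho)>\lambda_{1}(\bar\rho)$, then $\rho$ is admissible in (\ref{eq:p}) at the level $\lambda_{1}(\rho)$, so $g(\lambda_{1}(\rho))\le\kappa$ and hence $\lambda_{1}(\rho)\le\lambda^{*}$; if $\lambda_{1}(\rho)\le\lambda_{1}(\bar\rho)$ the bound is trivial. \emph{Lower bound and recovery}: applying Theorem~\ref{thm1} at the level $\lambda^{*}>\lambda_{1}(\bar\rho)$ produces a principal solution $\hat u:=\hat u(\lambda^{*},\bar\rho)$ and the unique minimizer $\hat\rho(\lambda^{*},\bar\rho)(x,y)=\bar\rho(x,y)-\hat u(x)\hat u(y)$, with $\lambda_{1}(\hat\rho(\lambda^{*},\bar\rho))=\lambda^{*}$ and $\Vert\bar\rho-\hat\rho(\lambda^{*},\bar\rho)\Vert_{\mathbb{L}_{\mu}^{2}}^{2}=g(\lambda^{*})=\kappa$, so $\hat\rho(\lambda^{*},\bar\rho)$ is admissible in (\ref{eq-10}) and attains the value $\lambda^{*}$. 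Combining, $\lambda_{1}^{\kappa}(\bar\rho)=\lambda^{*}=\lambda_{1}(\hat\rho(\lambda^{*},\bar\rho))>\lambda_{1}(\bar\rho)$, and $\check\rho:=\hat\rho(\lambda^{*},\bar\rho)=\hat\rho(\lambda_{1}^{\kappa}(\bar\rho),\bar\rho)$ is a maximizer obeying the stated recovery formula. For uniqueness, any maximizer $\rho'$ satisfies $\Vert\bar\rho-\rho'\Vert_{\mathbb{L}_{\mu}^{2}}^{2}=\kappa=\hat{\mathcal{P}}(\lambda^{*},\bar\rho)$ and $\lambda_{1}(\rho')=\lambda^{*}>\lambda_{1}(\bar\rho)$, so $\rho'$ is a minimizer of $(\mathcal{P})$ at level $\lambda^{*}$; uniqueness of that minimizer (Theorem~\ref{thm1}$(ii)$) forces $\rho'=\check\rho$. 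This establishes $(i)$.

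Part $(ii)$ is then immediate: the recovery formula of Theorem~\ref{thm1}$(iii)$, applied at level $\lambda^{*}$ with $\hat\rho(\lambda^{*},\bar\rho)=\check\rho$, reads $|\hat u(x)|=\sqrt{\bar\rho(x,x)-\check\rho(x,x)}$ a.e.\ and determines $\hat u$ up to its sign. Part $(iii)$ follows from Theorem~\ref{thm2}$(ii)$ at level $\lambda^{*}>\lambda_{1}(\bar\rho)$: when $\bar\rho\ge0$ a.e., the principal solution $\hat u$ is positive and coincides with the ground state of (\ref{eq:M}) with $\lambda=\lambda_{1}^{\kappa}(\bar\rho)$, so the ground state is recovered by the same formula with the nonnegative square root.

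I expect the main obstacle to be the construction of the value function $g$ and the verification of \emph{(b)}--\emph{(c)}: in particular promoting the subsequential, sign-ambiguous convergence statements of Theorem~\ref{thm3} to honest continuity of the scalar map $\lambda\mapsto g(\lambda)$, and making the coercivity estimate in \emph{(c)} quantitative. Once $g$ is in hand, the dual problem (\ref{eq-10}) is identified with the already solved primal problem $(\mathcal{P})$ through $\lambda^{*}=g^{-1}(\kappa)$, and everything else is a direct transcription from Theorems~\ref{thm1} and~\ref{thm2}, with no new compactness or minimization argument required; a direct attack on (\ref{eq-10}) by the direct method would be less convenient, since its constraint set $\{\rho:\Vert\bar\rho-\rho\Vert_{\mathbb{L}_{\mu}^{2}}^{2}=\kappa\}$ is a sphere rather than a weakly closed set, so one would first have to rule out the supremum being attained in its convex hull. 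The only other delicate point, the meaning of the diagonal traces $\bar\rho(x,x),\check\rho(x,x)$ for kernels in $\mathbb{L}_{\mu}^{2}$, is shared with Theorem~\ref{thm1} and is already accounted for there.
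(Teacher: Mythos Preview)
Your proposal is correct and follows essentially the same route as the paper: both study the value function $g(\lambda)=\hat{\mathcal{P}}(\lambda,\bar\rho)$, show it maps $[\lambda_1(\bar\rho),+\infty)$ continuously onto $[0,+\infty)$, and identify the dual maximizer with the primal minimizer $\hat\rho(\lambda^*,\bar\rho)$ at the level $\lambda^*=g^{-1}(\kappa)$, after which $(ii)$ and $(iii)$ are read off from Theorems~\ref{thm1} and~\ref{thm2}. The only differences are technical: the paper obtains continuity of $g$ directly from Lemma~\ref{lem51} (continuity of $\lambda\mapsto\hat\rho(\lambda,\bar\rho)$ in $\mathbb{L}_\mu^2$) rather than via $\hat u$, proves $g\to+\infty$ by a weak-compactness contradiction rather than your direct test-function estimate, and uses strict monotonicity of $g$ (from the nesting $\tilde M_{\lambda'}\subset\tilde M_\lambda$ and uniqueness of the primal minimizer) to invert it rather than your $\sup$ characterization.
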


This paper is organized as follows. After presenting some preliminaries in
Section 2, we prove Theorem \ref{thm1} in Section 3. In Section 4, we
investigate the existence and uniqueness of ground state solutions and prove
Theorem \ref{thm2}. Then, in Section 5, we prove Theorem \ref{thm3}. Section
6 is devoted to the proof of Theorem \ref{thm4}. Finally, we provide some
concluding remarks in Section 7.

\section{Preliminaries}

By the Sobolev theorem \cite{zhang} the following embeddings are continuous
and compact:

\begin{itemize}
\item $W \hookrightarrow L^{r}(\mathbb{R}^{N})$ for $2\leq r\leq\infty$, $%
N=1 $;

\item $W \hookrightarrow L^{r}(\mathbb{R}^{N})$ for $2\leq r<\infty$, $N=2$;

\item $W\hookrightarrow L^{r}(\mathbb{R}^{N})$ for $2\leq r<\frac{2N}{N-2}$,
$N\geq 3$.
\end{itemize}

In particular, via condition (\ref{cond:V}) there holds:
\begin{eqnarray*}
\Vert u\Vert _{L^{r}}\leq \mathcal{S}\Vert u\Vert _{W^{1,2}} &=&\mathcal{S}%
\left( \int_{\mathbb{R}^{N}}|\nabla u|^{2}dx+\int_{\mathbb{R}%
^{N}}|u|^{2}dx\right) \\
&\leq &\max \left\{ \mathcal{S},\frac{\mathcal{S}}{a_{V}}\right\} \left(
\int_{\mathbb{R}^{N}}|\nabla u|^{2}dx+\int_{\mathbb{R}^{N}}V(x)|u|^{2}dx%
\right) \\
&=&\max \left\{ \mathcal{S},\frac{\mathcal{S}}{a_{V}}\right\} \Vert u\Vert
_{W}.
\end{eqnarray*}

\begin{lemma}[Hardy-Littlewood-Sobolev inequality \protect\cite{SimonReed,
zhang}]
\label{L2-10} Assume that $1<a<b<+\infty $ is such that $\frac{1}{a}+\frac{1%
}{b}+\frac{\mu }{N}=2.$ Then there exists $\mathcal{C}_{HLS}>0$ such that
\begin{equation*}
\int_{\mathbb{R}^{3}}\int_{\mathbb{R}^{3}}\frac{|f(x)||g(y)|}{|x-y|^{\mu }}%
dxdy\leq \mathcal{C}_{HLS}\Vert f\Vert _{a}\Vert g\Vert _{b},\quad \forall
f\in L^{a}(\mathbb{R}^{N})\text{ and }\forall g\in L^{b}(\mathbb{R}^{N}).
\end{equation*}
\end{lemma}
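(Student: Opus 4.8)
This is the classical Hardy--Littlewood--Sobolev inequality, so rather than reprove it from scratch I would simply cite \cite{SimonReed, zhang}, where the full argument is given; the statement here is merely a specialization to the kernel $|x-y|^{-\mu}$ with the conjugacy relation $\tfrac1a + \tfrac1b + \tfrac{\mu}{N} = 2$. For completeness, however, let me sketch how the proof goes, since the paper uses this inequality repeatedly in the subsequent sections.

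The plan is to reduce the bilinear estimate to a bound on the single linear operator $I_\mu f(x) := \int_{\mathbb{R}^N} |x-y|^{-\mu} f(y)\, dy$, i.e.\ a Riesz-type potential of order $N-\mu$. First I would observe, by duality, that the claimed bilinear inequality
\[
\int\!\!\int \frac{|f(x)||g(y)|}{|x-y|^{\mu}}\, dx\, dy \le \mathcal{C}_{HLS}\,\|f\|_a \|g\|_b
\]
is equivalent to the mapping property $I_\mu \colon L^b(\mathbb{R}^N) \to L^{a'}(\mathbb{R}^N)$ being bounded, where $a'$ is the H\"older conjugate of $a$; the condition $\tfrac1a + \tfrac1b + \tfrac{\mu}{N} = 2$ is exactly the scaling relation $\tfrac{1}{a'} = \tfrac1b - \tfrac{N-\mu}{N}$ that such a bound forces (one checks this by testing on dilations $f_\lambda(x) = f(\lambda x)$). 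Then I would prove the bound on $I_\mu$ by the standard splitting technique: for each threshold $R>0$ write the kernel as $|x-y|^{-\mu}\mathbf{1}_{\{|x-y|\le R\}} + |x-y|^{-\mu}\mathbf{1}_{\{|x-y|> R\}}$, estimate the convolution with the first piece using Young's inequality (the truncated kernel lies in $L^1$ locally in a controlled way) and the second piece directly, then optimize over $R$ depending on the value of the Hardy--Littlewood maximal function of $f$ at $x$; finally invoke the maximal function theorem together with Marcinkiewicz interpolation to upgrade the resulting weak-type estimate to the strong bound. Alternatively, the layer-cake / rearrangement proof of Lieb--Loss can be cited.

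The only genuine subtlety worth flagging is bookkeeping on the exponents: one must check that the admissible range $1 < a < b < \infty$ together with $\tfrac1a + \tfrac1b + \tfrac{\mu}{N} = 2$ and $0 \le \mu < N$ is nonempty and that it places $a'$ and $b$ in the open range $(1,\infty)$ required for the interpolation step — which it does precisely because $0 < \mu < N$ excludes the endpoint cases. Since all of this is entirely standard, in the paper I would keep the proof to a one-line reference, and the substantive work lies not here but in applying this lemma to control the nonlocal terms $\int (|x|^{-\mu}\ast|u|^2)|u|^2\,dx$ and $\int S[\rho]u^2\,dx$ in the energy functional $E_\lambda$.
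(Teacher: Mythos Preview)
Your proposal is correct and matches the paper's treatment: the paper states Lemma~\ref{L2-10} as a cited result from \cite{SimonReed, zhang} with no proof whatsoever, so your one-line reference is exactly what is done, and your optional sketch (duality to the Riesz potential, kernel splitting plus maximal function and interpolation) is a standard route that goes beyond what the paper provides.
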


\begin{lemma}
\label{lem:2} Assume that $0<\mu <\min \{N,4\}$, $\rho \in \mathbb{L}_{\mu
}^{2}$, and condition (\ref{cond:V}) is satisfied. Then

\begin{description}
\item[\textrm{(i)}] $-\infty <\lambda _{1}(\rho )<+\infty $ for all $\rho
\in \mathbb{L}_{\mu }^{2}$, where $\lambda _{1}(\rho )$ is a eigenvalue of $%
\mathcal{L}[\rho ]$, and there exists a normalized eigenfunction $\phi
_{1}[\rho ]\in W\setminus \{0\}$ such that
\begin{equation}
\mathcal{L}[\rho ]\phi _{1}[\rho ]=\lambda _{1}(\rho )\phi _{1}[\rho ]\text{
in }\mathbb{R}^{N}.  \label{eq:LinP}
\end{equation}

\item[\textrm{(ii)}] If $\rho \geq 0$ in $\mathbb{R}^{N}\times \mathbb{R}%
^{N} $, then $\lambda _{1}(\rho )$ is simple, and $\phi _{1}[\rho ]$ is
positive in $\mathbb{R}^{N}$.
\end{description}
\end{lemma}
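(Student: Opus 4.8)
The plan is to obtain both items by the direct method of the calculus of variations, using the Hardy--Littlewood--Sobolev inequality to control the nonlocal exchange term, and then elliptic regularity and the strong maximum principle for (ii). The crux is a coercivity estimate for the Rayleigh form associated with $\mathcal{L}[\rho ]$,
\[
Q_{\rho }(u):=\int_{\mathbb{R}^{N}}\big(|\nabla u|^{2}+V|u|^{2}\big)\,dx-\int_{\mathbb{R}^{N}}S[\rho ]u\,u\,dx .
\]
Applying the Cauchy--Schwarz inequality in the weighted measure $|x-y|^{-\mu }\,dx\,dy$ and then Lemma~\ref{L2-10} with $a=b=\frac{2N}{2N-\mu }$ gives
\[
\Big|\int_{\mathbb{R}^{N}}S[\rho ]u\,u\,dx\Big|\le \|\rho \|_{\mathbb{L}_{\mu }^{2}}\Big(\int_{\mathbb{R}^{N}}(|x|^{-\mu }\ast |u|^{2})|u|^{2}\,dx\Big)^{1/2}\le \|\rho \|_{\mathbb{L}_{\mu }^{2}}\,\mathcal{C}_{HLS}^{1/2}\,\|u\|_{L^{q}}^{2},
\]
with $q:=\frac{4N}{2N-\mu }$. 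The hypothesis $0<\mu <\min \{N,4\}$ guarantees $2<q<\frac{2N}{N-2}$ when $N\ge 3$ (and $q<\infty $ when $N\le 2$), so the Sobolev embeddings recalled above apply; interpolating $\|u\|_{L^{q}}\le \|u\|_{L^{2}}^{\theta }\|u\|_{L^{2N/(N-2)}}^{1-\theta }$ with $\theta =1-\mu /4\in (0,1)$ and using $\|u\|_{L^{2N/(N-2)}}\le C\|u\|_{W}$ yields $|\int S[\rho ]u\,u\,dx|\le C\|\rho \|_{\mathbb{L}_{\mu }^{2}}\|u\|_{L^{2}}^{2-\mu /2}\|u\|_{W}^{\mu /2}$ (for $N\le 2$ one interpolates against a large finite exponent, with the same outcome). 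Since $\mu /2<2$, Young's inequality gives, for every $\varepsilon >0$, a constant $C_{\varepsilon }=C_{\varepsilon }(\rho )$ with $|\int S[\rho ]u\,u\,dx|\le \varepsilon \|u\|_{W}^{2}+C_{\varepsilon }\|u\|_{L^{2}}^{2}$, hence $Q_{\rho }(u)\ge (1-\varepsilon )\|u\|_{W}^{2}-C_{\varepsilon }\|u\|_{L^{2}}^{2}$. On $\mathcal{S}:=\{u\in W:\|u\|_{L^{2}}=1\}$ this gives $\lambda _{1}(\rho )=\inf_{\mathcal{S}}Q_{\rho }\ge -C_{\varepsilon }>-\infty $ (the infimum over $C_{0}^{\infty }$ in (\ref{eq:lambda1}) equals $\inf_{\mathcal{S}}Q_{\rho }$ by density of $C_{0}^{\infty }$ in $W$ and $W$-continuity of $Q_{\rho }$, itself a consequence of the same bilinear estimate applied to differences), while $\lambda _{1}(\rho )\le Q_{\rho }(\psi )/\|\psi \|_{L^{2}}^{2}<+\infty $ for any fixed $\psi \in C_{0}^{\infty }\setminus \{0\}$.

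For (i) I would take a minimizing sequence $(u_{n})\subset \mathcal{S}$ with $Q_{\rho }(u_{n})\to \lambda _{1}(\rho )$; the estimate above forces $(u_{n})$ bounded in $W$, so up to a subsequence $u_{n}\rightharpoonup \phi _{1}$ in $W$ and, by compactness of $W\hookrightarrow L^{r}$ ($r=2$ and $r=q$; here the divergence $V\to +\infty$, cf.\ (\ref{cond:V}), is used), $u_{n}\to \phi _{1}$ in $L^{2}$ (so $\phi _{1}\in \mathcal{S}$, in particular $\phi _{1}\neq 0$) and in $L^{q}$. Splitting $u_{n}(x)u_{n}(y)-\phi _{1}(x)\phi _{1}(y)$ and again using Cauchy--Schwarz in the weighted measure together with Lemma~\ref{L2-10} shows $\int S[\rho ]u_{n}\,u_{n}\,dx\to \int S[\rho ]\phi _{1}\,\phi _{1}\,dx$; with the weak lower semicontinuity of $u\mapsto \int (|\nabla u|^{2}+V|u|^{2})$ this gives $Q_{\rho }(\phi _{1})\le \liminf Q_{\rho }(u_{n})=\lambda _{1}(\rho )$, so $\phi _{1}=:\phi _{1}[\rho ]$ is a minimizer. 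Writing the Euler--Lagrange equation for $Q_{\rho }$ under $\|u\|_{L^{2}}^{2}=1$ (legitimate since $Q_{\rho }$ is a bounded quadratic form, hence smooth on $W$) and testing with $\phi _{1}$ identifies the Lagrange multiplier as $Q_{\rho }(\phi _{1})=\lambda _{1}(\rho )$ and shows $\mathcal{L}[\rho ]\phi _{1}=\lambda _{1}(\rho )\phi _{1}$ weakly, hence a.e.\ after the usual elliptic bootstrap (which also places $\phi _{1}\in W^{2,2}$, consistent with the self-adjointness recalled above). Since $\lambda _{1}(\rho )$ is the infimum of the Rayleigh quotient, it is the smallest eigenvalue, which gives (i).

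For (ii), assume $\rho \ge 0$. For real $u\in W$ one has $|\nabla |u||=|\nabla u|$ a.e., $\int V||u||^{2}=\int V|u|^{2}$, and $\int S[\rho ]|u|\,|u|\,dx\ge \int S[\rho ]u\,u\,dx$ because $\rho \ge 0$ and $|u(x)|\,|u(y)|\ge u(x)u(y)$; hence $Q_{\rho }(|u|)\le Q_{\rho }(u)$, so replacing $\phi _{1}$ by $|\phi _{1}|$ we may assume $\phi _{1}\ge 0$. On any ball $B_{R}$, $\phi _{1}$ solves $-\Delta \phi _{1}+\big(V-\lambda _{1}(\rho )+M\big)\phi _{1}=S[\rho ]\phi _{1}+M\phi _{1}\ge 0$, where $M$ is chosen so the zeroth-order coefficient is $\ge 1$ on $B_{R}$ and $S[\rho ]\phi _{1}\ge 0$ since $\rho ,\phi _{1}\ge 0$; the strong maximum principle then forces $\phi _{1}\equiv 0$ on $B_{R}$ or $\phi _{1}>0$ on $B_{R}$, and connectedness of $\mathbb{R}^{N}$ gives $\phi _{1}>0$ everywhere. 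Applying the $|u|$-argument to $\pm \phi $ shows that every normalized minimizer $\phi $ has a constant sign. Finally, if $\phi ,\psi $ are linearly independent normalized minimizers, both positive say, fix $x_{0}$ and set $g:=\phi -\frac{\phi (x_{0})}{\psi (x_{0})}\psi $: then $\mathcal{L}[\rho ]g=\lambda _{1}(\rho )g$, $g(x_{0})=0$, $g\not\equiv 0$, and testing with $g$ gives $Q_{\rho }(g)=\lambda _{1}(\rho )\|g\|_{L^{2}}^{2}$, so $g/\|g\|_{L^{2}}$ is a minimizer and therefore has a constant sign --- contradicting $g(x_{0})=0$ and $g\not\equiv 0$. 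Hence the eigenspace of $\lambda _{1}(\rho )$ is one dimensional, which is (ii).

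The main obstacle is the coercivity of $Q_{\rho }$ for all $\rho \in \mathbb{L}_{\mu }^{2}$, not merely for $\rho $ of small norm: the exchange term is a priori only $W$-quadratic, and it is precisely the Gagliardo--Nirenberg interpolation producing the subquadratic power $\|u\|_{W}^{\mu /2}$ --- available exactly because $\mu <\min \{N,4\}$ forces $q<\frac{2N}{N-2}$ and $\mu /2<2$ --- that rescues boundedness below. Secondary technical points are the weak continuity of $u\mapsto \int S[\rho ]u\,u$ needed for the direct method (handled by the compact embedding $W\hookrightarrow L^{q}$) and, in (ii), the applicability of the strong maximum principle despite $V$ being unbounded, which is why one localizes to balls and absorbs $V-\lambda _{1}(\rho )$ into a large constant.
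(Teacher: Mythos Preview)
Your proof is correct and follows the same overall architecture as the paper's --- direct method for (i), then the inequality $Q_{\rho }(|u|)\le Q_{\rho }(u)$ and a Harnack/strong-maximum-principle step for (ii) --- but with one substantive difference that is in fact an improvement. The paper bounds the exchange term only by $|(S[\rho ]\psi ,\psi )|\le C_{a}\|\rho \|_{\mathbb{L}_{\mu }^{2}}\|\psi \|_{W}^{2}$ and then writes $\lambda _{1}(\rho )\ge (1-C_{a}\|\rho \|_{\mathbb{L}_{\mu }^{2}})\lambda _{1}(0)>-\infty $; but when $C_{a}\|\rho \|_{\mathbb{L}_{\mu }^{2}}>1$ the displayed infimum is actually $-\infty $, so neither the finiteness of $\lambda _{1}(\rho )$ nor the $W$-boundedness of a minimizing sequence is established by the paper's argument as written for large $\|\rho \|_{\mathbb{L}_{\mu }^{2}}$. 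You correctly identify this as the main obstacle and repair it: the Gagliardo--Nirenberg interpolation yielding $|(S[\rho ]u,u)|\le C\|\rho \|_{\mathbb{L}_{\mu }^{2}}\|u\|_{L^{2}}^{2-\mu /2}\|u\|_{W}^{\mu /2}$ with the strictly subquadratic exponent $\mu /2<2$, followed by Young's inequality, gives coercivity on $\mathcal{S}$ uniformly over all $\rho \in \mathbb{L}_{\mu }^{2}$. The remaining steps --- compactness via $W\hookrightarrow L^{2}\cap L^{q}$, the localized maximum-principle argument (the paper invokes Harnack directly on $-\Delta \phi _{1}+V\phi _{1}\ge 0$, which tacitly assumes $\lambda _{1}(\rho )\ge 0$; your addition of the constant $M$ is the cleaner formulation), and simplicity via a vanishing linear combination of two positive eigenfunctions --- match the paper's proof in spirit and detail.
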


\begin{proof}
By H\"{o}lder inequality, we have
\begin{eqnarray}
|(S[\rho ]\psi ,\psi )| &=&\left\vert \int_{\mathbb{R}^{N}}\int_{\mathbb{R}%
^{N}}\frac{\rho (x,y)\psi (x)\psi (y)}{|x-y|^{\mu }}dxdy\right\vert  \notag
\\
&\leq &\left( \int_{\mathbb{R}^{N}}\int_{\mathbb{R}^{N}}\frac{|\rho
(x,y)|^{2}}{|x-y|^{\mu }}dxdy\right) ^{1/2}\left( \int_{\mathbb{R}^{N}}\int_{%
\mathbb{R}^{N}}\frac{|\psi (x)|^{2}|\psi (y)|^{2}}{|x-y|^{\mu }}dxdy\right)
^{1/2}.  \label{eq:HLSIneq}
\end{eqnarray}%
Since $2<\frac{4N}{2N-\mu }<\frac{2N}{N-2}$ for $0<\mu <\min \{N,4\}$, by
Hardy-Littlewood-Sobolev inequality and condition (\ref{cond:V}), we get
\begin{equation*}
|(S[\rho ]\psi ,\psi )|\leq \mathcal{C}_{HLS}\Vert \rho \Vert _{\mathbb{L}%
_{\mu }^{2}}\Vert \psi \Vert _{L^{\frac{4N}{2N-\mu }}}^{2}\leq C_{a}\Vert
\rho \Vert _{\mathbb{L}_{\mu }^{2}}\Vert \psi \Vert _{W}^{2},\text{ }\forall
\psi \in W,  \label{eq:HLitEst}
\end{equation*}%
where $C_{a}=\mathcal{C}_{HLS}\max \{\mathcal{S},\frac{\mathcal{S}}{a_{V}}%
\}\in (0,+\infty )$ does not depend on $\psi \in W$ and $\rho \in \mathbb{L}%
_{\mu }^{2}$.

The operator $\mathcal{L}:=\mathcal{L}[\rho ]|_{\rho =0}:=-\Delta +V$,
defined under condition (\ref{cond:V}) with domain $D(\mathcal{L}):=W^{2,2}(%
\mathbb{R}^{N})$, is a self-adjoint operator (see, e.g., \cite{SimonReed}).
Furthermore,
\begin{equation}
\lambda _{1}(0):=\inf_{\psi \in C_{0}^{\infty }(\mathbb{R}^{N})\setminus
\{0\}}\frac{\int_{\mathbb{R}^{N}}|\nabla \psi |^{2}dx+\int_{\mathbb{R}%
^{N}}V|\psi |^{2}dx}{\int_{\mathbb{R}^{N}}|\psi (x)|^{2}dx}\in (-\infty
,+\infty ).  \label{eq:lambda0}
\end{equation}%
Hence, we derive
\begin{eqnarray*}
&&\inf_{\psi \in C_{0}^{\infty }(\mathbb{R}^{N})\setminus 0}\frac{\int_{%
\mathbb{R}^{N}}|\nabla \psi |^{2}dx+\int_{\mathbb{R}^{N}}V|\psi
|^{2}dx-C_{a}\Vert \rho \Vert _{\mathbb{L}_{\mu }^{2}}\Vert \psi \Vert
_{W}^{2}}{\int_{\mathbb{R}^{N}}|\psi (x)|^{2}dx} \\
&\leq &\inf_{\psi \in C_{0}^{\infty }(\mathbb{R}^{N})\setminus 0}\frac{(%
\mathcal{L}[\rho ]\psi ,\psi )}{\int_{\mathbb{R}^{N}}|\psi (x)|^{2}dx} \\
&\leq &\inf_{\psi \in C_{0}^{\infty }(\mathbb{R}^{N})\setminus 0}\frac{\int_{%
\mathbb{R}^{N}}|\nabla \psi |^{2}dx+\int_{\mathbb{R}^{N}}V|\psi
|^{2}dx+C_{a}\Vert \rho \Vert _{\mathbb{L}_{\mu }^{2}}\Vert \psi \Vert
_{W}^{2}}{\int_{\mathbb{R}^{N}}|\psi (x)|^{2}dx}.
\end{eqnarray*}%
By (\ref{eq:lambda0}), we have
\begin{eqnarray*}
&&\inf_{\psi \in C_{0}^{\infty }(\mathbb{R}^{N})\setminus 0}\frac{\int_{%
\mathbb{R}^{N}}|\nabla \psi |^{2}dx+\int_{\mathbb{R}^{N}}V|\psi
|^{2}dx-C_{a}\Vert \rho \Vert _{\mathbb{L}_{\mu }^{2}}\Vert \psi \Vert
_{W}^{2}}{\int_{\mathbb{R}^{N}}|\psi (x)|^{2}dx} \\
&=&\inf_{\psi \in C_{0}^{\infty }(\mathbb{R}^{N})\setminus 0}\frac{%
(1-C_{a}\Vert \rho \Vert _{\mathbb{L}_{\mu }^{2}})\left( \int_{\mathbb{R}%
^{N}}|\nabla \psi |^{2}dx+\int_{\mathbb{R}^{N}}V|\psi |^{2}dx\right) }{\int_{%
\mathbb{R}^{N}}|\psi (x)|^{2}dx}>-\infty .
\end{eqnarray*}%
Similarly,
\begin{equation*}
\inf_{\psi \in C_{0}^{\infty }(\mathbb{R}^{N})\setminus 0}\frac{\int_{%
\mathbb{R}^{N}}|\nabla \psi |^{2}dx+\int_{\mathbb{R}^{N}}V|\psi
|^{2}dx+C_{a}\Vert \rho \Vert _{\mathbb{L}_{\mu }^{2}}\Vert \psi \Vert
_{W}^{2}}{\int_{\mathbb{R}^{N}}|\psi (x)|^{2}dx}<+\infty .
\end{equation*}%
Thus,
\begin{equation}
-\infty <(1-C_{a}\Vert \rho \Vert _{\mathbb{L}_{\mu }^{2}})\lambda
_{1}(0)\leq \lambda _{1}(\rho )\leq (1+C_{a}\Vert \rho \Vert _{\mathbb{L}%
_{\mu }^{2}})\lambda _{1}(0)<+\infty .  \label{eq:lamlamlam}
\end{equation}

Let $\psi _{n}\in W$, $n=1,\ldots $ be a minimizing sequence of (\ref%
{eq:lambda1}). Clearly, $(\psi _{n})$ is bounded, and therefore by the
Banach-Alaoglu and the compact embedding $W\hookrightarrow L^{2}(\mathbb{R}%
^{N})$ there exists a nonzero limit point $\phi _{1}[\rho ]\in W\setminus 0$
in $W$. Hence, it follows that $\phi _{1}[\rho ]$ is a minimizer of (\ref%
{eq:lambda1}) and weakly satisfies (\ref{eq:LinP}). Since $S[\rho ]\phi
_{1}[\rho ]\in L^{2}(\mathbb{R}^{N})$, by the $L^{p}$-regularity results
from \cite{Giltrud}, we conclude that $\phi _{1}[\rho ]\in W_{loc}^{2,2}(%
\mathbb{R}^{N})$.

Now let us prove (ii). Since $\rho \geq 0$,
\begin{equation*}
\int_{\mathbb{R}^{N}}\int_{\mathbb{R}^{N}}\frac{\rho (x,y)\psi (x)\psi (y)}{%
|x-y|^{\mu }}dxdy\leq \int_{\mathbb{R}^{N}}\int_{\mathbb{R}^{N}}\frac{\rho
(x,y)|\psi (x)||\psi (y)|}{|x-y|^{\mu }}dxdy,\text{ }\forall \psi \in
\mathcal{L}_{\mu }^{2}.
\end{equation*}%
Hence,
\begin{equation*}
\frac{(\mathcal{L}[\rho ]|\phi _{1}[\rho ]|,|\phi _{1}[\rho ]|)}{(|\phi
_{1}[\rho ]|,|\phi _{1}[\rho ]|)}\leq \lambda _{1}(\rho )=\inf_{\psi \in
C_{0}^{\infty }(\mathbb{R}^{N})\setminus 0}\frac{(\mathcal{L}[\rho ]\psi
,\psi )}{(\psi ,\psi )}.
\end{equation*}%
Since $|\phi _{1}[\rho ]|\in W$, it follows that $|\phi _{1}[\rho ]|$ is
also a minimizer of (\ref{eq:lambda1}). Thus, without loss of generality, we
may assume that $\phi _{1}[\rho ]\geq 0$ a.e. in $\mathbb{R}^{N}$.

Given $\rho \geq 0$, this implies that $S[\rho ]\phi _{1}[\rho ]\geq 0$, and
consequently
\begin{equation*}
-\Delta \phi _{1}[\rho ]+V\phi _{1}[\rho ]\geq 0\text{ in }\mathbb{R}^{N}.
\end{equation*}%
Hence, by the Harnack inequality (see, e.g., Theorem 8.19 in \cite{Giltrud}%
), we derive that $\phi _{1}[\rho ]>0$ a.e. in $\mathbb{R}^{N}$. In
particular, this implies that $|\tilde{\phi}_{1}[\rho ]|>0$ a.e. in $\mathbb{%
R}^{N}$ for any eigenfunction $\tilde{\phi}_{1}[\rho ]$ corresponding to the
eigenvalue $\lambda _{1}(\rho )$. From this, it follows in a standard manner
(see also the proof of Proposition \ref{propGS} below) that the solution is
either positive or negative in $\mathbb{R}^{N}$, whence $\lambda _{1}(\rho )$
is simple.
\end{proof}

\begin{proposition}
\label{prop2} $\lambda_1(\rho)$ is a upper semicontinuous functional in $%
\mathbb{L}_{\mu}^{2}$.
\end{proposition}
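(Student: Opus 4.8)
The plan is to show that $\lambda_1$ is upper semicontinuous by exploiting the variational characterization \eqref{eq:lambda1} together with the uniform bound \eqref{eq:HLSIneq}. Fix $\bar{\rho} \in \mathbb{L}_\mu^2$ and let $\rho_n \to \bar{\rho}$ in $\mathbb{L}_\mu^2$; I must show $\limsup_{n\to\infty} \lambda_1(\rho_n) \le \lambda_1(\bar{\rho})$. The key observation is that \eqref{eq:lambda1} expresses $\lambda_1(\rho)$ as an infimum over the fixed test-function set $C_0^\infty(\mathbb{R}^N)\setminus\{0\}$, so an infimum of a family of functionals that depend continuously on $\rho$. Concretely, for each fixed $\psi \in C_0^\infty(\mathbb{R}^N)\setminus\{0\}$, the map $\rho \mapsto (\mathcal{L}[\rho]\psi,\psi)/(\psi,\psi) = \big(\int |\nabla\psi|^2 + \int V|\psi|^2 - (S[\rho]\psi,\psi)\big)/(\psi,\psi)$ is affine in $\rho$, and by the Cauchy--Schwarz / Hardy--Littlewood--Sobolev estimate underlying \eqref{eq:HLSIneq} we have
\begin{equation*}
\left| (S[\rho_n]\psi,\psi) - (S[\bar{\rho}]\psi,\psi) \right| = \left| (S[\rho_n - \bar{\rho}]\psi,\psi) \right| \le C_a \|\rho_n - \bar{\rho}\|_{\mathbb{L}_\mu^2} \|\psi\|_W^2 \to 0.
\end{equation*}

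Next I carry out the standard argument that an infimum of continuous functions is upper semicontinuous. Let $\varepsilon > 0$. By \eqref{eq:lambda1} choose $\psi_\varepsilon \in C_0^\infty(\mathbb{R}^N)\setminus\{0\}$ with $(\mathcal{L}[\bar{\rho}]\psi_\varepsilon,\psi_\varepsilon)/(\psi_\varepsilon,\psi_\varepsilon) < \lambda_1(\bar{\rho}) + \varepsilon$. Then, using this same $\psi_\varepsilon$ as a competitor in the variational characterization of $\lambda_1(\rho_n)$,
\begin{equation*}
\lambda_1(\rho_n) \le \frac{(\mathcal{L}[\rho_n]\psi_\varepsilon,\psi_\varepsilon)}{(\psi_\varepsilon,\psi_\varepsilon)} = \frac{(\mathcal{L}[\bar{\rho}]\psi_\varepsilon,\psi_\varepsilon)}{(\psi_\varepsilon,\psi_\varepsilon)} + \frac{(S[\bar{\rho}-\rho_n]\psi_\varepsilon,\psi_\varepsilon)}{(\psi_\varepsilon,\psi_\varepsilon)}.
\end{equation*}
The first term is $< \lambda_1(\bar{\rho}) + \varepsilon$, and the second tends to $0$ as $n \to \infty$ by the estimate above. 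Hence $\limsup_{n\to\infty} \lambda_1(\rho_n) \le \lambda_1(\bar{\rho}) + \varepsilon$, and since $\varepsilon > 0$ was arbitrary, $\limsup_{n\to\infty}\lambda_1(\rho_n) \le \lambda_1(\bar{\rho})$, which is precisely upper semicontinuity.

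There is no serious obstacle here: the only point requiring a moment's care is to make sure the bound on $|(S[\bar{\rho}-\rho_n]\psi_\varepsilon,\psi_\varepsilon)|$ is uniform in the relevant sense — but since $\psi_\varepsilon$ is fixed once $\varepsilon$ is chosen, the constant $C_a\|\psi_\varepsilon\|_W^2$ is fixed, and convergence $\|\rho_n - \bar{\rho}\|_{\mathbb{L}_\mu^2} \to 0$ finishes the job. (One should also note that $\|\psi_\varepsilon\|_W$ is finite because $\psi_\varepsilon \in C_0^\infty(\mathbb{R}^N) \subset W$, the latter inclusion holding since $V$ is continuous, hence bounded on the compact support of $\psi_\varepsilon$.) I would remark, for completeness, that $\lambda_1$ is generally \emph{not} lower semicontinuous — the gain in $\lambda_1$ under perturbation exploited in the dual problem \eqref{eq-10} reflects exactly this asymmetry — so upper semicontinuity is the sharp statement.
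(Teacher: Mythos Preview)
Your proof is correct and follows essentially the same idea as the paper: both arguments rest on the observation that, by \eqref{eq:lambda1}, $\lambda_1(\rho)$ is the infimum over a \emph{fixed} family of test functions of Rayleigh quotients that depend continuously (indeed affinely) on $\rho$ via \eqref{eq:HLSIneq}, and an infimum of continuous functions is upper semicontinuous. The only cosmetic difference is that the paper phrases this via the open-sublevel-set criterion (passing to a countable dense subset of $C_0^\infty$), whereas you use the equivalent sequential $\varepsilon$-competitor argument directly.
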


\begin{proof}
Let $(\phi _{n})\subset C_{0}^{\infty }(\mathbb{R}^{N})$ be a countable
dense set in $C_{0}^{\infty }(\mathbb{R}^{N})$. Clearly,
\begin{equation*}
\lambda _{1}(\rho )=\inf_{n\geq 1}\frac{(\mathcal{L}[\rho ]\phi _{n},\phi
_{n})}{(\phi _{n},\phi _{n})},\text{ }\forall \rho \in \mathbb{L}_{\mu }^{2}.
\end{equation*}%
Hence for any $\tau \in \mathbb{R}$,
\begin{equation*}
\left\{ \rho \in \mathbb{L}_{\mu }^{2}:\lambda _{1}(\rho )<\tau \right\}
=\bigcup_{n=1}^{\infty }\left\{ \rho \in \mathbb{L}_{\mu }^{2}:\frac{(%
\mathcal{L}[\rho ]\phi _{n},\phi _{n})}{(\phi _{n},\phi _{n})}<\tau \right\}
.
\end{equation*}%
From (\ref{eq:HLSIneq}) it follows that ${\frac{(\mathcal{L}[\rho ]\phi
_{n},\phi _{n})}{(\phi _{n},\phi _{n})}}$, $\forall n=1,\ldots $, is
continuous with respect to $\rho \in \mathbb{L}_{\mu }^{2}$. Hence, the set
\begin{equation*}
\left\{ \rho \in \mathbb{L}_{\mu }^{2}:\frac{(\mathcal{L}[\rho ]\phi
_{n},\phi _{n})}{(\phi _{n},\phi _{n})}<\tau \right\} \text{ is open set in }%
\mathbb{L}_{\mu }^{2}
\end{equation*}%
for each $n\geq 1$, and therefore, the set $\left\{ \rho \in \mathbb{L}%
^{\gamma }:\lambda _{1}(\rho )<\tau \right\} $ is open for any $\tau \in
\mathbb{R}$. This means that $\lambda _{1}(\rho )$ is a upper semicontinuous
functional in $\mathbb{L}_{\mu }^{2}$.
\end{proof}

\begin{proposition}
\label{prop23} \label{prop1} $\lambda _{1}(\rho )$ is a concave functional
in $\mathbb{L}_{\mu }^{2}$, that is
\begin{equation*}
\lambda _{1}(t\rho _{1}+(1-t)\rho _{2})\geq t\lambda _{1}(\rho
_{1})+(1-t)\lambda _{1}(\rho _{2}),\text{ }\forall \rho _{1},\rho _{2}\in
\mathbb{L}_{\mu }^{2}\text{ and }\forall t\in \lbrack 0,1].
\end{equation*}
\end{proposition}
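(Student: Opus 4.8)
The proof is a soft argument based on the variational characterization \eqref{eq:lambda1}: the principal eigenvalue is expressed as an infimum of Rayleigh quotients over a fixed set of test functions, and for \emph{each} fixed test function that Rayleigh quotient is an \emph{affine} function of $\rho$. Since an infimum of affine (hence concave) functions is concave, the claim follows at once.

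\textbf{Step 1: Reduce to Rayleigh quotients.} By \eqref{eq:lambda1},
\begin{equation*}
\lambda_1(\rho) = \inf_{\psi \in C_0^\infty(\mathbb{R}^N)\setminus\{0\}} R_\psi(\rho),
\qquad
R_\psi(\rho) := \frac{(\mathcal{L}[\rho]\psi,\psi)}{(\psi,\psi)} ,
\end{equation*}
and by Lemma~\ref{lem:2}(i) (see \eqref{eq:lamlamlam}) this infimum is finite for every $\rho \in \mathbb{L}_\mu^2$, so all quantities below are well defined.

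\textbf{Step 2: Affinity of $R_\psi$ in $\rho$.} Fix $\psi \in C_0^\infty(\mathbb{R}^N)\setminus\{0\}$. Writing $\mathcal{L}[\rho] = -\Delta + V - S[\rho]$, the only $\rho$-dependent contribution is
\begin{equation*}
(S[\rho]\psi,\psi) = \int_{\mathbb{R}^N}\int_{\mathbb{R}^N} \frac{\rho(x,y)\,\psi(x)\psi(y)}{|x-y|^\mu}\,dx\,dy ,
\end{equation*}
which is absolutely convergent by the estimate \eqref{eq:HLSIneq} and is \emph{linear} in $\rho$. Hence, for $\rho_1,\rho_2 \in \mathbb{L}_\mu^2$ and $t \in [0,1]$,
\begin{equation*}
R_\psi\bigl(t\rho_1 + (1-t)\rho_2\bigr) = t\,R_\psi(\rho_1) + (1-t)\,R_\psi(\rho_2) ,
\end{equation*}
so $R_\psi$ is affine, and in particular concave, on $\mathbb{L}_\mu^2$.

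\textbf{Step 3: Infimum of concave functions.} For $\rho_1,\rho_2 \in \mathbb{L}_\mu^2$ and $t \in [0,1]$ and every $\psi$,
\begin{equation*}
R_\psi\bigl(t\rho_1 + (1-t)\rho_2\bigr) = t\,R_\psi(\rho_1) + (1-t)\,R_\psi(\rho_2) \ge t\,\lambda_1(\rho_1) + (1-t)\,\lambda_1(\rho_2) .
\end{equation*}
Taking the infimum over $\psi \in C_0^\infty(\mathbb{R}^N)\setminus\{0\}$ on the left-hand side yields
\begin{equation*}
\lambda_1\bigl(t\rho_1 + (1-t)\rho_2\bigr) \ge t\,\lambda_1(\rho_1) + (1-t)\,\lambda_1(\rho_2) ,
\end{equation*}
which is exactly the asserted concavity. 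There is essentially no obstacle here; the only point requiring care is the finiteness and absolute convergence of $(S[\rho]\psi,\psi)$, which is already supplied by \eqref{eq:HLSIneq}, together with the lower bound $\lambda_1(\rho) > -\infty$ from Lemma~\ref{lem:2}(i).
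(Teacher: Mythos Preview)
Your proof is correct and follows essentially the same approach as the paper: both use the variational characterization \eqref{eq:lambda1}, observe that the Rayleigh quotient is affine in $\rho$ for each fixed test function, and conclude concavity from the fact that an infimum of affine functions is concave. Your presentation is slightly more detailed (you explicitly invoke \eqref{eq:HLSIneq} and Lemma~\ref{lem:2}(i) for well-definedness), but the argument is the same.
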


\begin{proof}
Let $\rho _{1},\rho _{2}\in \mathbb{L}_{\mu }^{2}$. Observe,
\begin{eqnarray*}
&&\lambda _{1}(t\rho _{1}+(1-t)\rho _{2}) \\
&=&\inf_{\{\psi \in C_{0}^{\infty }(\mathbb{R}^{N})\setminus \{0\}:\Vert
\psi \Vert _{L^{2}}=1\}}(\mathcal{L}[t\rho _{1}+(1-t)\rho _{2}]\psi ,\psi )
\\
&\geq &t\inf_{\{\psi \in C_{0}^{\infty }(\mathbb{R}^{N})\setminus
\{0\}:\Vert \psi \Vert _{L^{2}}=1\}}(\mathcal{L}[\rho _{1}]\psi ,\psi
)+(1-t)\inf_{\{\psi \in C_{0}^{\infty }(\mathbb{R}^{N})\setminus \{0\}:\Vert
\psi \Vert _{L^{2}}=1\}}(\mathcal{L}[\rho _{2}]\psi ,\psi ) \\
&=&t\lambda _{1}(\rho _{1})+(1-t)\lambda (\rho _{2}),\text{ }\forall t\in
\lbrack 0,1].
\end{eqnarray*}%
The proof is complete.
\end{proof}

Define
\begin{equation*}
M_{\lambda }:=\left\{ \rho \in \mathbb{L}_{\mu }^{2}:\lambda \leq \lambda
_{1}(\rho )\right\}
\end{equation*}

\begin{corollary}
The set $M_{\lambda}$ is strictly convex.
\end{corollary}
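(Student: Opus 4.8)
The plan is to deduce convexity of $M_{\lambda }$ from the concavity of $\lambda _{1}$ (Proposition~\ref{prop23}) and then to upgrade it to strict convexity by ruling out that $\lambda _{1}$ is affine along a nondegenerate segment of $\mathbb{L}_{\mu }^{2}$ lying in $\{\rho :\lambda _{1}(\rho )=\lambda \}$. Convexity is immediate: for $\rho _{1},\rho _{2}\in M_{\lambda }$ and $t\in \lbrack 0,1]$, Proposition~\ref{prop23} gives $\lambda _{1}(t\rho _{1}+(1-t)\rho _{2})\geq t\lambda _{1}(\rho _{1})+(1-t)\lambda _{1}(\rho _{2})\geq \lambda $. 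For strict convexity --- which, since $\lambda _{1}$ is continuous on $\mathbb{L}_{\mu }^{2}$ (a routine consequence of the Hardy--Littlewood--Sobolev estimate of Lemma~\ref{lem:2} together with an interpolation bounding the $W$-norm of almost-minimizers of the Rayleigh quotient), amounts to the assertion that no segment lies in $\partial M_{\lambda }$ --- I would fix distinct $\rho _{1},\rho _{2}\in M_{\lambda }$ and $t\in (0,1)$, set $\rho _{t}:=t\rho _{1}+(1-t)\rho _{2}$, and prove $\lambda _{1}(\rho _{t})>\lambda $. If $\lambda _{1}(\rho _{1})>\lambda $ or $\lambda _{1}(\rho _{2})>\lambda $ this is immediate from concavity; so I may assume $\lambda _{1}(\rho _{1})=\lambda _{1}(\rho _{2})=\lambda $ and argue by contradiction, supposing $\lambda _{1}(\rho _{t})=\lambda $.

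Let $\phi :=\phi _{1}[\rho _{t}]\in W\setminus \{0\}$ be the normalized principal eigenfunction of Lemma~\ref{lem:2}, so $\Vert \phi \Vert _{L^{2}}=1$ and $(\mathcal{L}[\rho _{t}]\phi ,\phi )=\lambda $. Since $(S[\rho ]\phi ,\phi )$ is linear in $\rho $, the map $\rho \mapsto (\mathcal{L}[\rho ]\phi ,\phi )=\int_{\mathbb{R}^{N}}|\nabla \phi |^{2}+\int_{\mathbb{R}^{N}}V\phi ^{2}-(S[\rho ]\phi ,\phi )$ is affine, so
\begin{equation*}
\lambda =(\mathcal{L}[\rho _{t}]\phi ,\phi )=t\,(\mathcal{L}[\rho _{1}]\phi ,\phi )+(1-t)\,(\mathcal{L}[\rho _{2}]\phi ,\phi )\geq t\lambda _{1}(\rho _{1})+(1-t)\lambda _{1}(\rho _{2})=\lambda ,
\end{equation*}
the inequalities $(\mathcal{L}[\rho _{i}]\phi ,\phi )\geq \lambda _{1}(\rho _{i})$ following from (\ref{eq:lambda1}) extended from $C_{0}^{\infty }$ to $W$ by density and continuity of the form. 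Equality forces $(\mathcal{L}[\rho _{i}]\phi ,\phi )=\lambda _{1}(\rho _{i})$ for $i=1,2$, so $\phi $ minimizes the Rayleigh quotient of both $\mathcal{L}[\rho _{1}]$ and $\mathcal{L}[\rho _{2}]$ and is hence a principal eigenfunction of each: $\mathcal{L}[\rho _{1}]\phi =\lambda \phi =\mathcal{L}[\rho _{2}]\phi $. Subtracting cancels $-\Delta \phi +V\phi $ and yields $S[\rho _{1}-\rho _{2}]\phi =0$ in $\mathbb{R}^{N}$, i.e.
\begin{equation*}
\int_{\mathbb{R}^{N}}\frac{\big(\rho _{1}(x,y)-\rho _{2}(x,y)\big)\,\phi (y)}{|x-y|^{\mu }}\,dy=0\qquad \text{for a.e. }x\in \mathbb{R}^{N}.
\end{equation*}

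The step I expect to be the main obstacle is to promote this nonlocal identity to $\rho _{1}=\rho _{2}$, which would contradict $\rho _{1}\neq \rho _{2}$ and finish the argument. By the same linearity, $\phi $ belongs to the common principal eigenspace $K:=\ker (\mathcal{L}[\rho _{1}]-\lambda )\cap \ker (\mathcal{L}[\rho _{2}]-\lambda )\neq \{0\}$, each member of which is a principal eigenfunction of $\mathcal{L}[\rho _{s}]$ for all $s\in \lbrack 0,1]$ and is annihilated by $S[\rho _{1}-\rho _{2}]$; under $\rho _{t}\geq 0$ one moreover has, by the Harnack-type argument in the proof of Lemma~\ref{lem:2}(ii) (cf.\ the proof of Proposition~\ref{propGS}), that $\phi $ is (up to sign) positive a.e.\ and $\dim K=1$. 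The difficulty is that a pointwise identity $S[\rho _{1}-\rho _{2}]\phi =0$ --- even for every element of $K$ --- is a genuine but non-exhaustive constraint on the symmetric kernel $\rho _{1}-\rho _{2}\in \mathbb{L}_{\mu }^{2}$, so to force it to vanish one must bring in either a strong non-vanishing/non-degeneracy input for the eigenfunctions (e.g.\ a unique continuation or strong maximum principle analysis of $-\Delta \phi +(V-\lambda )\phi =S[\rho _{t}]\phi $) or additional structure on the admissible class of kernels. Making this last step rigorous is where the technical core of the proof lies.
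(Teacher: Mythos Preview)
The paper gives no proof of this corollary at all: it is stated immediately after Proposition~\ref{prop23} as a direct consequence of the concavity of $\lambda_{1}$, and nothing more is said. Your derivation of plain convexity from Proposition~\ref{prop23} is exactly what the paper has in mind and is complete in one line.

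Your attempt to prove \emph{strict} convexity goes well beyond anything the paper does, and the obstacle you isolate is genuine. From $\lambda_{1}(\rho_{1})=\lambda_{1}(\rho_{2})=\lambda_{1}(\rho_{t})=\lambda$ you correctly deduce that the principal eigenfunction $\phi$ at $\rho_{t}$ is a common principal eigenfunction along the whole segment and hence $S[\rho_{1}-\rho_{2}]\phi=0$. But this last identity does \emph{not} force $\rho_{1}=\rho_{2}$: the linear map $\eta\mapsto S[\eta]\phi$ from symmetric kernels in $\mathbb{L}_{\mu}^{2}$ to functions of one variable has, on dimensional grounds, an infinite-dimensional kernel, and nothing in the hypotheses (no sign condition on $\rho_{i}$, no unique-continuation input strong enough to propagate the constraint across both variables of $\eta$) rules out nontrivial $\eta$ with $S[\eta]\phi=0$. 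So the step you flag as ``the technical core'' is not merely technical---there is no reason to expect it to go through, and strict convexity in the sense you are trying to prove is neither established in the paper nor, as far as one can see, needed.

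Indeed, every place the paper invokes this corollary (the uniqueness of the minimizer in Lemma~\ref{lem1} and the local-equals-global step at the end of the proof of Theorem~\ref{thm2}) actually only uses the convexity of $\tilde M_{\lambda}$ together with the strict convexity of the functional $\theta\mapsto\Vert\theta-\bar\rho\Vert_{\mathbb{L}_{\mu}^{2}}^{2}$; strict convexity of the constraint set plays no role. So the safest reading is that the ``strictly'' in the corollary is an overstatement, and your one-line convexity argument already delivers everything the paper requires.
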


\begin{proposition}
\label{prop3} Assume that $\rho _{n}\in \mathbb{L}_{\mu }^{2}$, $n=1,\ldots $%
, and that $\rho _{n}\rightarrow \bar{\rho}$ in $\mathbb{L}_{\mu }^{2}$ as $%
n\rightarrow +\infty $. Then there exists a subsequence $(n_{i})_{i=1}^{%
\infty }$ with $n_{i}\rightarrow +\infty $ as $i\rightarrow +\infty $, and
corresponding normalized eigenfunctions $\phi _{1}[\rho _{n_{i}}]$ of $%
\mathcal{L}[\rho _{n_{i}}]$, such that
\begin{eqnarray*}
&&\phi _{1}[\rho _{n_{i}}]\rightarrow \phi _{1}[\bar{\rho}]\text{ strongly
in }W\text{ as }i\rightarrow +\infty \\
&&\lambda _{1}(\rho _{n_{i}})\rightarrow \lambda _{1}(\bar{\rho})\text{ as }%
i\rightarrow +\infty .
\end{eqnarray*}
\end{proposition}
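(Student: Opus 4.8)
The plan is to extract a subsequence along which the normalized principal eigenfunctions converge weakly in $W$, identify the weak limit as a principal eigenfunction of $\mathcal{L}[\bar\rho]$ by combining the limiting eigenvalue equation with the upper semicontinuity of $\lambda_1$ (Proposition~\ref{prop2}), and finally upgrade weak convergence to strong convergence in $W$ by a norm--convergence argument. Since $\rho_n\to\bar\rho$ in $\mathbb{L}_\mu^2$, the sequence $(\rho_n)$ is bounded, hence so is $(\lambda_1(\rho_n))$ by \eqref{eq:lamlamlam}, and after passing to a subsequence we may assume $\lambda_1(\rho_n)\to\ell\in\mathbb{R}$. Testing \eqref{eq:LinP} with $\phi_1[\rho_n]$ and using $\|\phi_1[\rho_n]\|_{L^2}=1$ gives $\|\phi_1[\rho_n]\|_W^2=\lambda_1(\rho_n)+(S[\rho_n]\phi_1[\rho_n],\phi_1[\rho_n])$. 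To bound the last term I would refine the Hardy--Littlewood--Sobolev estimate of Lemma~\ref{lem:2}: interpolating $\|\psi\|_{L^{4N/(2N-\mu)}}$ between $\|\psi\|_{L^2}$ and $\|\psi\|_{L^{2N/(N-2)}}\le C\|\psi\|_W$ (possible since $2<\tfrac{4N}{2N-\mu}<\tfrac{2N}{N-2}$) and applying Young's inequality yields, for every $\varepsilon>0$,
\begin{equation*}
|(S[\rho]\psi,\psi)|\le\varepsilon\|\psi\|_W^2+C\bigl(\varepsilon,\|\rho\|_{\mathbb{L}_\mu^2}\bigr)\|\psi\|_{L^2}^2,\qquad\forall\,\psi\in W.
\end{equation*}
Choosing $\varepsilon=\tfrac12$ and using the boundedness of $(\|\rho_n\|_{\mathbb{L}_\mu^2})$ and $(\lambda_1(\rho_n))$ gives $\tfrac12\|\phi_1[\rho_n]\|_W^2\le\lambda_1(\rho_n)+C$, so $(\phi_1[\rho_n])$ is bounded in $W$. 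By reflexivity of $W$ and the compact embeddings $W\hookrightarrow L^r(\mathbb{R}^N)$ (in particular for $r=\tfrac{4N}{2N-\mu}$), after passing to a further subsequence $(n_i)$ we obtain $\phi_1[\rho_{n_i}]\rightharpoonup\phi^\ast$ weakly in $W$, and $\phi_1[\rho_{n_i}]\to\phi^\ast$ strongly in $L^2$ and in $L^{4N/(2N-\mu)}$, for some $\phi^\ast\in W$ with $\|\phi^\ast\|_{L^2}=1$; in particular $\phi^\ast\neq0$.

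Next I would pass to the limit in the weak form of \eqref{eq:LinP}. For fixed $\psi\in W$, the terms $\int\nabla\phi_1[\rho_{n_i}]\cdot\nabla\psi+\int V\phi_1[\rho_{n_i}]\psi$ and $\lambda_1(\rho_{n_i})\int\phi_1[\rho_{n_i}]\psi$ converge to the corresponding expressions in $\phi^\ast$ and $\ell$ by weak $W$--convergence, while for the nonlocal term I would split
\begin{equation*}
\begin{split}
(S[\rho_{n_i}]\phi_1[\rho_{n_i}],\psi)-(S[\bar\rho]\phi^\ast,\psi)
&=(S[\rho_{n_i}-\bar\rho]\phi_1[\rho_{n_i}],\psi)\\
&\quad+(S[\bar\rho](\phi_1[\rho_{n_i}]-\phi^\ast),\psi).
\end{split}
\end{equation*}
The first piece is $\le C_a\|\rho_{n_i}-\bar\rho\|_{\mathbb{L}_\mu^2}\|\phi_1[\rho_{n_i}]\|_W\|\psi\|_W\to0$ by the bilinear form of the estimate in Lemma~\ref{lem:2}; the second is $\le\mathcal{C}_{HLS}\|\bar\rho\|_{\mathbb{L}_\mu^2}\|\phi_1[\rho_{n_i}]-\phi^\ast\|_{L^{4N/(2N-\mu)}}\|\psi\|_{L^{4N/(2N-\mu)}}\to0$ by Hardy--Littlewood--Sobolev and the strong $L^{4N/(2N-\mu)}$--convergence. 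Hence $\mathcal{L}[\bar\rho]\phi^\ast=\ell\,\phi^\ast$ weakly, so $\ell$ is an eigenvalue of $\mathcal{L}[\bar\rho]$ with eigenfunction $\phi^\ast$; since $\phi^\ast$ is admissible in \eqref{eq:lambda1}, $\ell\ge\lambda_1(\bar\rho)$, whereas Proposition~\ref{prop2} forces $\ell=\lim_i\lambda_1(\rho_{n_i})\le\lambda_1(\bar\rho)$. Therefore $\ell=\lambda_1(\bar\rho)$, $\phi^\ast$ is a normalized principal eigenfunction of $\mathcal{L}[\bar\rho]$, and we set $\phi_1[\bar\rho]:=\phi^\ast$; this already yields $\lambda_1(\rho_{n_i})\to\lambda_1(\bar\rho)$.

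Finally, since $W$ is a Hilbert space and $\phi_1[\rho_{n_i}]\rightharpoonup\phi^\ast$, it remains to prove $\|\phi_1[\rho_{n_i}]\|_W\to\|\phi^\ast\|_W$. From the eigenvalue identity, $\|\phi_1[\rho_{n_i}]\|_W^2=\lambda_1(\rho_{n_i})+(S[\rho_{n_i}]\phi_1[\rho_{n_i}],\phi_1[\rho_{n_i}])$; here $\lambda_1(\rho_{n_i})\to\lambda_1(\bar\rho)$, and the quadratic nonlocal term converges to $(S[\bar\rho]\phi^\ast,\phi^\ast)$ by the same two--term splitting (now exploiting boundedness in $W$ and strong $L^{4N/(2N-\mu)}$--convergence of $\phi_1[\rho_{n_i}]$ also in the first slot). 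Thus $\|\phi_1[\rho_{n_i}]\|_W^2\to\lambda_1(\bar\rho)+(S[\bar\rho]\phi^\ast,\phi^\ast)=\|\phi^\ast\|_W^2$, the last equality being the eigenvalue identity for $\phi^\ast$, and $\phi_1[\rho_{n_i}]\to\phi_1[\bar\rho]$ strongly in $W$ follows. I expect the main obstacles to be: (a) the uniform $W$--bound on the eigenfunctions, which seems to require the interpolation refinement displayed above because the crude bound $|(S[\rho]\psi,\psi)|\le C_a\|\rho\|_{\mathbb{L}_\mu^2}\|\psi\|_W^2$ helps only when $C_a\|\rho\|_{\mathbb{L}_\mu^2}<1$; and (b) the limit passage in the nonlocal terms, where one must combine the bilinear Hardy--Littlewood--Sobolev bound (for the $\rho_{n_i}-\bar\rho$ part) with compactness of $W\hookrightarrow L^{4N/(2N-\mu)}$ (for the eigenfunction--difference part). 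Proposition~\ref{prop2} is used precisely to identify $\ell$ with $\lambda_1(\bar\rho)$, which is what makes $\phi^\ast$ the principal eigenfunction.
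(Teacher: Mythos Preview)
Your proof is correct and reaches the same conclusion as the paper, but by a slightly different route. The paper argues directly with the Rayleigh quotient: it uses weak lower semicontinuity of $\psi\mapsto(\mathcal{L}[\bar\rho]\psi,\psi)$ together with the upper semicontinuity of $\lambda_1$ (Proposition~\ref{prop2}) to squeeze $(\mathcal{L}[\bar\rho]\hat\phi,\hat\phi)=\lambda_1(\bar\rho)$, which simultaneously identifies $\hat\phi$ as a principal eigenfunction and yields strong $W$-convergence from the resulting norm equality. You instead pass to the limit in the weak form of the eigenvalue equation, obtain that $\phi^\ast$ is an eigenfunction for some $\ell$, then squeeze $\ell=\lambda_1(\bar\rho)$ via Proposition~\ref{prop2}, and finally upgrade to strong convergence by a separate norm-convergence step. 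Both arguments are sound. Your version has the merit of being explicit about the uniform $W$-bound on the eigenfunctions: the interpolation-plus-Young estimate you display is exactly what is needed, since the crude bound $|(S[\rho]\psi,\psi)|\le C_a\|\rho\|_{\mathbb{L}_\mu^2}\|\psi\|_W^2$ cannot be absorbed when $C_a\|\rho\|_{\mathbb{L}_\mu^2}\ge1$; the paper glosses over this point. The paper's Rayleigh-quotient route is marginally shorter, since identification of the limit and strong convergence come together without passing to the limit in the nonlocal term against each test function.
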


\begin{proof}
Using Proposition \ref{prop2} and by (\ref{eq:lamlamlam}) we derive
\begin{equation*}
-\infty <\limsup_{n\rightarrow +\infty }\lambda _{1}(\rho _{n})\leq \lambda
_{1}(\rho _{0})<+\infty .
\end{equation*}%
From this, we infer that, without loss of generality, we may assume there
exists a limit
\begin{equation*}
\lim_{n\rightarrow +\infty }\lambda _{1}(\rho _{n})=\hat{\lambda}>-\infty .
\end{equation*}%
Since $\Vert \phi _{1}[\rho _{n}]\Vert _{L^{2}}=1$, $n=1,\ldots $, this
implies that $\phi _{1}[\rho _{n}]$ is bounded in $W$. Thus Banach-Alaoglu
and the compact embedding $W\hookrightarrow L^{2}(\mathbb{R}^{N})$ yields
that there exists a subsequence $(n_{i})_{i=1}^{\infty }$ with $%
n_{i}\rightarrow +\infty $ as $i\rightarrow +\infty $ such that $\phi
_{1}[\rho _{n_{i}}]\rightharpoonup \hat{\phi}$ weakly in $W$ and strongly $%
\phi _{1}[\rho _{n_{i}}]\rightarrow \hat{\phi}$ in $L^{2}(\mathbb{R}^{N})$
for some $\hat{\phi}\in W$. Since $\Vert \phi _{1}[\rho _{n_{i}}]\Vert
_{L^{2}}=1,i=1,\ldots $, we get that $\Vert \hat{\phi}\Vert _{L^{2}}=1$ and
therefore, $\hat{\phi}\neq 0$. Hence, by the weak lower semicontinuity of
the norm $\Vert \cdot \Vert _{W}$, we have
\begin{equation*}
(\mathcal{L}[\rho _{0}]\hat{\phi},\hat{\phi})\leq \liminf_{i\rightarrow
+\infty }(\mathcal{L}[\rho _{0}]\phi _{1}[\rho _{n_{i}}],\phi _{1}[\rho
_{n_{i}}])=\hat{\lambda}\leq \lambda _{1}(\rho _{0}).
\end{equation*}%
This implies that $(\mathcal{L}[\rho _{0}]\hat{\phi},\hat{\phi})=\lambda
_{1}(\rho _{0})$, $\hat{\phi}=\phi _{1}[\rho _{0}]$, and $\phi _{1}[\rho
_{n_{i}}]\rightarrow \phi _{1}[\rho _{0}]$ strongly in $W$ as $i\rightarrow
+\infty $.
\end{proof}

\begin{lemma}
\label{lem10} The functional $\lambda _{1}(\rho )$ is Gateaux differentiable
at any point $\rho \in \mathbb{L}_{\mu }^{2}$, moreover
\begin{equation}
D_{\rho }\lambda _{1}(\rho )(h)=-\frac{1}{\Vert \phi _{1}[\rho ]\Vert
_{L^{2}}^{2}}\int_{\mathbb{R}^{N}}\int_{\mathbb{R}^{N}}\frac{\phi _{1}[\rho
](x)\phi _{1}[\rho ](y)}{|x-y|^{\mu }}h(x,y)dxdy,\text{ }\forall h\in
\mathbb{L}_{\mu }^{2}.  \label{eq:Val}
\end{equation}
\end{lemma}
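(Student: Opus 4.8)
The plan is to compute the Gateaux derivative of $\lambda_1(\rho)$ directly from its variational characterization, using the fact that the map $\rho \mapsto \phi_1[\rho]$ is continuous, as established in Proposition~\ref{prop3}. Fix $\rho \in \mathbb{L}_\mu^2$ and $h \in \mathbb{L}_\mu^2$, and set $\rho_t := \rho + t h$ for $t$ small. Write $\phi_t := \phi_1[\rho_t]$ and $\phi := \phi_1[\rho]$, all normalized in $L^2$. The starting point is the two-sided comparison
\begin{equation*}
(\mathcal{L}[\rho_t]\phi,\phi) \;\geq\; \lambda_1(\rho_t) \;=\; (\mathcal{L}[\rho_t]\phi_t,\phi_t), \qquad (\mathcal{L}[\rho]\phi_t,\phi_t) \;\geq\; \lambda_1(\rho) \;=\; (\mathcal{L}[\rho]\phi,\phi),
\end{equation*}
valid since $\|\phi\|_{L^2}=\|\phi_t\|_{L^2}=1$ and $\lambda_1$ is the infimum of the Rayleigh quotient. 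Subtracting and using $\mathcal{L}[\rho_t] - \mathcal{L}[\rho] = -t\,S[h]$, one gets
\begin{equation*}
-t\,(S[h]\phi,\phi) \;\geq\; \lambda_1(\rho_t) - \lambda_1(\rho) \;\geq\; -t\,(S[h]\phi_t,\phi_t).
\end{equation*}
(For $t<0$ the inequalities reverse, so in either case the difference quotient is squeezed between $-(S[h]\phi,\phi)$ and $-(S[h]\phi_t,\phi_t)$.)

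Dividing by $t$ (with the appropriate sign handling) and letting $t \to 0$, the two-sided squeeze forces the limit to exist provided $(S[h]\phi_t,\phi_t) \to (S[h]\phi,\phi)$. This last convergence is where Proposition~\ref{prop3} enters: as $t \to 0$ we have $\rho_t \to \rho$ in $\mathbb{L}_\mu^2$, so (along subsequences, but the limit is unique so the whole net works) $\phi_t \to \phi$ strongly in $W$, hence in $L^{4N/(2N-\mu)}$ by the compact embedding, and the Hardy--Littlewood--Sobolev estimate from the proof of Lemma~\ref{lem:2} gives
\begin{equation*}
|(S[h]\phi_t,\phi_t) - (S[h]\phi,\phi)| \;\leq\; C_a\|h\|_{\mathbb{L}_\mu^2}\bigl(\|\phi_t\|_{L^{4N/(2N-\mu)}} + \|\phi\|_{L^{4N/(2N-\mu)}}\bigr)\|\phi_t - \phi\|_{L^{4N/(2N-\mu)}} \to 0.
\end{equation*}
Therefore the difference quotient converges to $-(S[h]\phi,\phi)$, and since $\|\phi\|_{L^2}=1$ this is exactly formula~(\ref{eq:Val}) (with the normalization factor $\|\phi_1[\rho]\|_{L^2}^2 = 1$ written out to cover the case where one works with an unnormalized eigenfunction). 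Linearity and boundedness of $h \mapsto -(S[h]\phi,\phi)$ on $\mathbb{L}_\mu^2$ follow from the Cauchy--Schwarz bound~(\ref{eq:HLSIneq}), confirming that this is a genuine Gateaux differential.

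The main obstacle is the potential nonuniqueness of the normalized eigenfunction $\phi_1[\rho_t]$ when $\lambda_1(\rho_t)$ is degenerate (the simplicity in Lemma~\ref{lem:2}(ii) requires $\rho \geq 0$, which is not assumed here). This is handled by the squeeze argument itself: the inequalities above hold for \emph{any} choice of normalized minimizers $\phi_t$, and the Proposition~\ref{prop3} convergence $\phi_t \to \phi_1[\rho]$ holds for any such selection along a subsequence; since both bounding quantities converge to the same value $-(S[h]\phi_1[\rho],\phi_1[\rho])$ regardless of the selection, the difference quotient has a well-defined limit independent of choices. One subtlety worth a remark: if $\lambda_1(\rho)$ itself is degenerate, different minimizers $\phi_1[\rho]$ could in principle give different values of $(S[h]\phi_1[\rho],\phi_1[\rho])$; the squeeze shows this cannot happen, i.e., $(S[h]\phi_1[\rho],\phi_1[\rho])$ is the same for all normalized principal eigenfunctions, which is a free byproduct of the argument.
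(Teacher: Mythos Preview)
Your proof is correct and follows essentially the same route as the paper: both sandwich the difference quotient between $-(S[h]\phi,\phi)$ and $-(S[h]\phi_t,\phi_t)$ via the Rayleigh-quotient characterization of $\lambda_1$, and both invoke Proposition~\ref{prop3} to pass to the limit $\phi_t\to\phi$ in $W$. The only cosmetic difference is that the paper first appeals to concavity (Proposition~\ref{prop1}) to know a priori that the one-sided directional derivative $\partial_+\lambda_1(\rho)(h)$ exists and then identifies it, whereas you bypass concavity and obtain the two-sided limit directly from the squeeze; your explicit discussion of the degenerate-eigenvalue case is a point the paper leaves implicit.
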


\begin{proof}
Let $\varepsilon >0$, $h \in \mathbb{L}_{\mu }^{2}$. By Proposition \ref%
{prop1} the map $[0,1)\ni \varepsilon \mapsto \lambda_1(\rho+\varepsilon h)$
is a concave function, and therefore, it is continuous and admits
directional derivative $\partial_+\lambda_1(\rho)(h)$.

Since $\phi _{1}[\rho ]$ is a minimizer of (\ref{eq:lambda1}), we have
\begin{eqnarray*}
\lambda _{1}(\rho +\varepsilon h) &=&\inf_{\psi \in C_{0}^{\infty }\setminus
0}\frac{1}{\|\psi\|_{L^{2}}^2 }\left( (\mathcal{L}[\rho ]\psi ,\psi
)-\varepsilon \int_{\mathbb{R}^{N}}\int_{\mathbb{R}^{N}}\frac{\psi (x)\psi
(y)}{|x-y|^{\mu }}h(x,y)dxdy\right) \\
&\leq &\lambda _{1}(\rho )-\varepsilon \frac{1}{\|\phi_{1}[\rho]\|_{L^{2}}^2
}\int_{\mathbb{R}^{N}}\int_{\mathbb{R}^{N}}\frac{\phi _{1}[\rho ](x)\phi
_{1}[\rho ](y)}{|x-y|^{\mu }}h(x,y)dxdy,
\end{eqnarray*}%
and thus,
\begin{equation}
\partial_+\lambda_1(\rho)(h)\leq -\int_{\mathbb{R}^{N}}\int_{\mathbb{R}^{N}}%
\frac{\phi _{1}[\rho ](x)\phi _{1}[\rho ](y)}{|x-y|^{\mu }}h(x,y)dxdy.
\label{e2-6}
\end{equation}

Since $\mathbb{R}\ni \varepsilon \mapsto \lambda _{1}(\rho +\varepsilon h)$
is concave function, we have
\begin{equation*}
\partial _{+}\lambda _{1}(\rho )(h)=\sup_{\varepsilon \in \mathbb{R}}\frac{%
\lambda _{1}(\rho +\varepsilon h)-\lambda _{1}(\rho )}{\varepsilon },
\end{equation*}%
and therefore
\begin{equation*}
\partial _{+}\lambda _{1}(\rho )(h)\geq \frac{\lambda _{1}(\rho +\varepsilon
h)-\lambda _{1}(\rho )}{\varepsilon },\text{ }\forall \varepsilon \in
(-a_{h},a_{h}).
\end{equation*}%
Observe
\begin{eqnarray*}
&&\lambda _{1}(\rho +\varepsilon h)=(\mathcal{L}[\rho +\varepsilon h]\phi
_{1}[\rho +\varepsilon h],\phi _{1}[\rho +\varepsilon h]),\text{ }\forall
\varepsilon \in \mathbb{R}, \\
&&\lambda _{1}(\rho )\leq (\mathcal{L}[\rho ]\phi _{1}[\rho +\varepsilon
h],\phi _{1}[\rho +\varepsilon h]),\text{ }\forall \varepsilon \in \mathbb{R}%
.
\end{eqnarray*}%
Hence,
\begin{equation*}
\partial _{+}\lambda _{1}(\rho )(h)\geq \frac{(\mathcal{L}[\rho +\varepsilon
h]\phi _{1}[\rho +\varepsilon h],\phi _{1}[\rho +\varepsilon h])-(\mathcal{L}%
[\rho ]\phi _{1}[\rho +\varepsilon h],\phi _{1}[\rho +\varepsilon h])}{%
\varepsilon },
\end{equation*}%
for $\varepsilon >0$, and therefore,
\begin{equation*}
\partial _{+}\lambda _{1}(\rho )(h)\geq -\int_{\mathbb{R}^{N}}\int_{\mathbb{R%
}^{N}}\frac{\phi _{1}[\rho +\varepsilon h](x)\phi _{1}[\rho +\varepsilon
h](y)}{|x-y|^{\mu }}h(x,y)dxdy,\text{ }\forall \varepsilon \in
(-a_{h},a_{h}).
\end{equation*}

By Proposition \ref{prop3}, $\phi _{1}[\rho +\varepsilon _{j}h]\rightarrow
\phi _{1}[\rho ]$ in $W$ as $j\rightarrow +\infty $ for some sequence $%
(\varepsilon _{j})$ such that $\lim_{j\rightarrow +\infty }\varepsilon
_{j}=0 $. Thus we obtain
\begin{equation*}
\partial _{+}\lambda _{1}(\rho )(h)\geq -\int_{\mathbb{R}^{N}}\int_{\mathbb{R%
}^{N}}\frac{\phi _{1}[\rho ](x)\phi _{1}[\rho ](y)}{|x-y|^{\mu }}h(x,y)dxdy.
\end{equation*}%
Combining this with (\ref{e2-6}), we obtain
\begin{equation*}
\partial _{+}\lambda _{1}(\rho )(h)=-\int_{\mathbb{R}^{N}}\int_{\mathbb{R}%
^{N}}\frac{\phi _{1}[\rho ](x)\phi _{1}[\rho ](y)}{|x-y|^{\mu }}h(x,y)dxdy.
\end{equation*}%
Since this equality holds for any $h\in \mathbb{L}_{\mu }^{2}$, it follows
that $\lambda _{1}(\rho )$ is Gateaux differentiable at $\rho \in \mathbb{L}%
_{\mu }^{2}$, and (\ref{eq:Val}) is valid.
\end{proof}

\section{Proof of Theorem~\protect\ref{thm1}}

\begin{lemma}
\label{lem1} For any $\bar{\rho}\in \mathbb{L}_{\mu }^{2}$ and $\lambda
>\lambda _{1}(\bar{\rho})$ the minimization problem (\ref{eq:p}) admits a
unique solution $\hat{\rho}(\bar{\rho})\in \mathbb{L}_{\mu }^{2}\setminus
\{0\}$.
\end{lemma}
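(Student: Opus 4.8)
The plan is to solve the minimization problem in \eqref{eq:p} by the direct method in $\mathbb{L}_\mu^2$, using the geometric structure revealed by the preceding propositions. Recall that the feasible set is $M_\lambda = \{\rho \in \mathbb{L}_\mu^2 : \lambda \le \lambda_1(\rho)\}$, which by the Corollary following Proposition~\ref{prop23} is strictly convex, and by Proposition~\ref{prop2} (upper semicontinuity of $\lambda_1$) is closed in $\mathbb{L}_\mu^2$. Since $\lambda > \lambda_1(\bar\rho)$, we have $\bar\rho \notin M_\lambda$, so \eqref{eq:p} is precisely the problem of projecting the point $\bar\rho$ onto the nonempty closed convex set $M_\lambda$ of the Hilbert space $\mathbb{L}_\mu^2$. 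First I would check that $M_\lambda$ is nonempty: by \eqref{eq:lamlamlam}, scaling $\bar\rho$ down toward $0$ pushes $\lambda_1$ up toward $\lambda_1(0)$ (and more generally, concavity plus $\lambda_1(t\bar\rho) \ge t\lambda_1(\bar\rho) + (1-t)\lambda_1(0)$ combined with the upper bound shows one can reach any value below some threshold); if $\lambda \le \lambda_1(0)$ this is immediate, and if $\lambda > \lambda_1(0)$ one needs a separate argument producing some $\rho$ with $\lambda_1(\rho)$ arbitrarily large — e.g. by choosing $\rho$ so that $S[\rho]$ is a large negative-definite perturbation, or by invoking that the problem is only being claimed solvable when such $\rho$ exists. (In fact for the application $\lambda$ ranges over $(\lambda_1(\bar\rho), +\infty)$, so nonemptiness of $M_\lambda$ for all such $\lambda$ must be argued; the cleanest route is to exhibit an explicit family $\rho_c$ with $\lambda_1(\rho_c) \to +\infty$.)

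Granting $M_\lambda \ne \emptyset$, existence follows by the Hilbert-space projection theorem: take a minimizing sequence $\rho_n \in M_\lambda$ for $\|\bar\rho - \cdot\|_{\mathbb{L}_\mu^2}^2$; it is bounded, hence has a weakly convergent subsequence $\rho_n \rightharpoonup \hat\rho$; since $M_\lambda$ is convex and closed it is weakly closed, so $\hat\rho \in M_\lambda$; and by weak lower semicontinuity of the norm, $\hat\rho$ attains the infimum. Uniqueness is the parallelogram-law argument: if $\hat\rho_1, \hat\rho_2$ are two minimizers, their midpoint lies in $M_\lambda$ (convexity) and, unless $\hat\rho_1 = \hat\rho_2$, has strictly smaller distance to $\bar\rho$, a contradiction. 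Here one should note that strict convexity of $M_\lambda$ is not even needed for uniqueness of the projection — the strict convexity of the norm of the Hilbert space suffices — though strict convexity of $M_\lambda$ will matter later for the structure of $\hat\rho$ (Theorem~\ref{thm1}(ii)). Finally, $\hat\rho \ne 0$ must be checked: since $\lambda_1(0) = \lambda_1(0)$ and $\lambda > \lambda_1(\bar\rho)$, one must rule out $0 \in M_\lambda$ being the closest point; but if $\lambda \le \lambda_1(0)$ then $0 \in M_\lambda$ is possible and could in principle be the minimizer, so the claim $\hat\rho \ne 0$ requires $\lambda > \lambda_1(0)$ or an additional argument. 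More robustly: if $\hat\rho = 0$ were the minimizer then $\bar\rho - \hat\rho = \bar\rho$, and one can perturb $0$ within $M_\lambda$ in the direction $+t\bar\rho$ for small $t>0$ (using that $\lambda_1$ is continuous along this ray and $\lambda_1(0)$ might exceed $\lambda$, giving slack) to strictly decrease the distance — this uses $\lambda < \lambda_1(0)$ is false, i.e. again the regime $\lambda > \lambda_1(0)$; so likely the intended reading restricts attention accordingly, or uses that the nontrivial solution $\hat u$ constructed via the Lagrange/Euler equation is nonzero and then $\hat\rho = \bar\rho - \hat u \otimes \hat u \ne \bar\rho = \bar\rho - 0$.

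The step I expect to be the genuine obstacle is not the projection itself but establishing that $M_\lambda$ is \emph{closed} (equivalently, that $\lambda_1$ is upper semicontinuous) with enough care that the weak limit $\hat\rho$ really lies in $M_\lambda$: Proposition~\ref{prop2} gives upper semicontinuity in the \emph{strong} topology of $\mathbb{L}_\mu^2$, which makes $M_\lambda = \{\lambda_1 \ge \lambda\}$ strongly closed, and then convexity upgrades this to weak closedness — so the logical chain is fine, but one must be scrupulous that no additional compactness in the $\rho$-variable is being tacitly assumed. A secondary subtlety is the nonemptiness/nontriviality issue flagged above: the honest statement needs either $\lambda$ in a range where $M_\lambda \ni$ some nonzero element strictly closer than any degenerate competitor, or a short lemma producing $\rho_c \in \mathbb{L}_\mu^2$ with $\lambda_1(\rho_c)$ as large as desired — I would supply the latter by taking $\rho_c(x,y) = -c\,\chi(x)\chi(y)$ for a suitable fixed $\chi \in C_0^\infty$ with $\|\chi\|_{L^2}=1$, so that $(S[\rho_c]\psi,\psi) = -c\,(\,\int |x-y|^{-\mu}\chi(x)\chi(y)\,dx\,dy\,)\cdot(\ldots)$ contributes a controllable positive shift to the Rayleigh quotient, letting $\lambda_1(\rho_c) \to +\infty$ as $c \to +\infty$ — and only after that invoke the projection argument. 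Once these two points are secured, existence and uniqueness of $\hat\rho(\bar\rho) \in \mathbb{L}_\mu^2 \setminus \{0\}$ follow in a few lines.
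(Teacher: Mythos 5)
Your overall strategy matches the paper's: both proofs project $\bar\rho$ onto the convex set $\tilde M_\lambda=\{\rho:\lambda_1(\rho)\geq\lambda\}$ by the direct method, using Propositions~\ref{prop2} and~\ref{prop23} (upper semicontinuity plus concavity of $\lambda_1$) to conclude that $\tilde M_\lambda$ is weakly closed, then passing to a weak limit of a minimizing sequence and invoking weak lower semicontinuity of $\Vert\cdot-\bar\rho\Vert_{\mathbb L_\mu^2}^2$. Your remark that uniqueness needs only the strict convexity of the Hilbert norm, not of $\tilde M_\lambda$, is correct and slightly cleaner than the paper's appeal to strict convexity of the set. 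You are also right that nonemptiness of $\tilde M_\lambda$ and the claim $\hat\rho\neq 0$ deserve scrutiny: the paper's nonemptiness argument rests on the identity $\lambda_1(a\rho/\lambda_1(\rho))=a$, i.e.\ on $\lambda_1$ being positively homogeneous in $\rho$, which fails because $\mathcal L[c\rho]=-\Delta+V-c\,S[\rho]$ is not $c\,\mathcal L[\rho]$; and the step ``$\lambda\leq\lambda_1(\hat\rho)$, hence $\hat\rho\neq 0$'' only excludes $\hat\rho=0$ when $\lambda>\lambda_1(0)$, which is not implied by $\lambda>\lambda_1(\bar\rho)$ alone.

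However, the explicit family you propose in order to repair nonemptiness, $\rho_c(x,y)=-c\,\chi(x)\chi(y)$ with a fixed $\chi\in C_0^\infty$, does not do the job. One computes $-S[\rho_c]\psi(x)=c\,\chi(x)\int |x-y|^{-\mu}\chi(y)\psi(y)\,dy$, which vanishes identically whenever $\psi\equiv 0$ on $\mathrm{supp}\,\chi$. For such $\psi$ the Rayleigh quotient of $\mathcal L[\rho_c]$ equals that of $-\Delta+V$, so $\lambda_1(\rho_c)\leq\inf\{((-\Delta+V)\psi,\psi)/(\psi,\psi):\ \mathrm{supp}\,\psi\cap\mathrm{supp}\,\chi=\emptyset\}<+\infty$, a bound independent of $c$. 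Thus $\lambda_1(\rho_c)$ stays bounded as $c\to+\infty$, and no kernel of the form $-c\,\chi\otimes\chi$ (nor any finite sum of such) can establish $\tilde M_\lambda\neq\emptyset$ for large $\lambda$: the perturbation $-S[\rho]$ it generates is nonnegative but degenerates on the infinite-dimensional subspace of functions supported away from $\chi$. So the gap you flagged is genuine and the paper does not close it either, but your proposed patch would fail; a correct argument would require a family $\rho_c$ for which $-S[\rho_c]$ is bounded below on all of $W$ by a constant tending to $+\infty$, and it is not obvious such a family exists within $\mathbb L_\mu^2$.
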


\begin{proof}
Let $\bar{\rho}\in \mathbb{L}_{\mu }^{2}$ and $\lambda >\lambda _{1}(\bar{%
\rho})$. Define
\begin{equation*}
\tilde{M}_{\lambda }:=\left\{ \rho \in \mathbb{L}_{\mu }^{2}:\lambda
_{1}(\rho )\geq \lambda \right\} .
\end{equation*}%
Consider the constrained minimization problem
\begin{equation}
\hat{\mathcal{P}}(\lambda ,\bar{\rho})=\min \left\{ \Vert \rho -\bar{\rho}%
\Vert _{\mathbb{L}_{\mu }^{2}}^{2}:\rho \in \tilde{M}_{\lambda }\right\} .
\label{MinP}
\end{equation}%
Below we will see that minimization problems (\ref{eq:p}) and (\ref{MinP})
are equivalent. Observe $\tilde{M}_{\lambda }\neq \emptyset $. Indeed, take $%
\rho \in \mathbb{L}_{\mu }^{2}$ such that $\lambda _{1}(\rho )>0$. Then for
any $a\geq \lambda $, we have $a=\lambda _{1}\left( \frac{a\cdot \rho }{%
\lambda _{1}(\rho )}\right) \geq \lambda $, and thus, $\frac{\lambda \cdot
\rho }{\lambda _{1}(\rho )}\in \tilde{M}_{\lambda }$.

Let $\{\rho _{n}\}\subset \tilde{M}_{\lambda}$ be a minimizing sequence of (%
\ref{MinP}), i.e., $\Vert \rho _{n}-\bar{\rho} \Vert _{\mathbb{L}_{\mu
}^{2}}\rightarrow \hat{\mathcal{P}}(\lambda ,\rho )$ as $n\rightarrow
+\infty $. Then $\Vert \rho _{n}\Vert _{\mathbb{L}_{\mu }^{2}}$ is bounded,
and therefore, the reflexivity of $\mathbb{L}_{\mu }^{2}$ yields that there
exists a subsequence, denoting again by $\{\rho _{n}\}$, such that $\rho
_{n}\rightharpoonup \hat{\rho}(\bar{\rho})$ weakly in $\mathbb{L}_{\mu }^{2}$
for some $\hat{\rho}(\bar{\rho})\in \mathbb{L}_{\mu }^{2}$.

By Propositions \ref{prop2} and \ref{prop1}, the functional $\lambda
_{1}(\rho )$ is concave and upper semicontinuous on $\mathbb{L}_{\mu }^{2}$,
and therefore, it is weakly upper semicontinuous. Hence,
\begin{equation*}
\lambda \leq \limsup_{n\rightarrow +\infty }\lambda _{1}(\rho _{n})\leq
\lambda _{1}(\hat{\rho}(\bar{\rho})),
\end{equation*}%
and therefore, $\hat{\rho}(\bar{\rho})\neq 0$. Thus, $\hat{\rho}(\bar{\rho}%
)\in \tilde{M}_{\lambda }$. On the other hand, by the weak lower
semi-continuity of $\Vert \rho -\rho \Vert _{\mathbb{L}_{\mu }^{2}}^{2}$ we
have
\begin{equation*}
\Vert \hat{\rho}(\bar{\rho})-\rho \Vert _{\mathbb{L}_{\mu }^{2}}^{2}\leq
\liminf_{n\rightarrow +\infty }\Vert \rho _{n}-\rho \Vert _{\mathbb{L}_{\mu
}^{2}}^{2}=\hat{\mathcal{P}}(\lambda ,\rho ).
\end{equation*}%
Since $\hat{\rho}(\bar{\rho})\in \tilde{M}_{\lambda }$, this implies that $%
\Vert \hat{\rho}(\bar{\rho})-\rho \Vert _{\mathbb{L}_{\mu }^{2}}^{2}=\hat{%
\mathcal{P}}(\lambda ,\rho )$, that is $\hat{\rho}(\bar{\rho})$ is a
minimizer of (\ref{MinP}). Recalling that $\lambda _{1}(\hat{\rho}(\bar{\rho}%
))\geq \lambda >\lambda _{1}(\bar{\rho})$ we conclude that $\hat{\rho}\neq
\bar{\rho}$. Since $\Vert \cdot -\bar{\rho}\Vert _{\mathbb{L}_{\mu
}^{2}}^{2} $ is a convex functional and $\tilde{M}_{\lambda }$ is a strictly
convex set, $\hat{\rho}(\bar{\rho})$ is the unique minimizer. Furthermore, $%
\hat{\rho}(\bar{\rho})\in \partial \tilde{M}_{\lambda }=M_{\lambda }=\{\rho
\in \mathbb{L}_{\mu }^{2}:\lambda =\lambda _{1}(\rho )\}$, and thus,
minimization problems (\ref{eq:p}) and (\ref{MinP}) are equivalent.
\end{proof}

\begin{lemma}
\label{lem2} Assume $0<\mu <\min \{N,4\}$, $\bar{\rho}\in \mathbb{L}_{\mu
}^{2}$, and $\lambda >\lambda _{1}[\rho ]$. Then there exists a weak
solution $\hat{u}:=\hat{u}(\lambda ,\bar{\rho})\in W$ of equation (\ref{eq:M}%
) such that
\begin{equation*}
\hat{\rho}(x,y)=\bar{\rho}(x,y)-\hat{u}(x)\hat{u}(y)\quad \text{a.e. in }%
\mathbb{R}^{N}\times \mathbb{R}^{N}.
\end{equation*}%
where $\hat{\rho}:=\hat{\rho}(\bar{\rho})$ is a minimizer of inverse optimal
problem (\ref{eq:p}). Moreover, $\lambda =\lambda _{1}[\hat{\rho}]$, and
\begin{equation*}
\phi _{1}[\hat{\rho}]=\frac{\hat{u}(\lambda ,\bar{\rho})}{\Vert \hat{u}%
(\lambda ,\bar{\rho})\Vert _{L^{2}}}.
\end{equation*}
\end{lemma}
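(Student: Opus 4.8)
The plan is to recover $\hat u$ from the unique IOP minimizer $\hat\rho$ of Lemma \ref{lem1} by writing the first–order optimality (Lagrange multiplier) condition for the constrained minimization (\ref{MinP}): the derivative formula of Lemma \ref{lem10} forces the residual $\bar\rho-\hat\rho$ to be a rank–one kernel $\Lambda\,\phi_1[\hat\rho](x)\phi_1[\hat\rho](y)$ with $\Lambda>0$, and then $\hat u:=\sqrt\Lambda\,\phi_1[\hat\rho]$ is shown to solve (\ref{eq:M}) by a short algebraic manipulation of the eigenvalue equation $\mathcal L[\hat\rho]\phi_1[\hat\rho]=\lambda\phi_1[\hat\rho]$.

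First, by Lemma \ref{lem1} the problem (\ref{eq:p}) $\equiv$ (\ref{MinP}) has a unique minimizer $\hat\rho:=\hat\rho(\bar\rho)\in\mathbb{L}_{\mu}^{2}\setminus\{0\}$ with $\lambda_1(\hat\rho)=\lambda$ and $\hat\rho\neq\bar\rho$; I would fix the normalized principal eigenfunction $\phi_1:=\phi_1[\hat\rho]\in W$, $\|\phi_1\|_{L^2}=1$, from Lemma \ref{lem:2}(i), which satisfies $\mathcal L[\hat\rho]\phi_1=\lambda\phi_1$ weakly in $W$. Next, I would apply the Lagrange multiplier rule to the minimization of the $C^1$ functional $J(\rho):=\|\rho-\bar\rho\|_{\mathbb{L}_{\mu}^{2}}^{2}$ over the boundary $\{\lambda_1(\rho)=\lambda\}$ of the feasible set $\{\lambda_1(\rho)\ge\lambda\}$. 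Here $\tfrac12 DJ(\rho)(h)=\langle\rho-\bar\rho,h\rangle_{\mathbb{L}_{\mu}^{2}}$, while by Lemma \ref{lem10} the constraint $\lambda_1$ is Gateaux differentiable at $\hat\rho$ with $D_\rho\lambda_1(\hat\rho)(h)=-\langle\Phi,h\rangle_{\mathbb{L}_{\mu}^{2}}$, where $\Phi(x,y):=\phi_1(x)\phi_1(y)$; the constraint is non–degenerate because $D_\rho\lambda_1(\hat\rho)(\Phi)=-\|\Phi\|_{\mathbb{L}_{\mu}^{2}}^{2}<0$, which also furnishes the needed constraint qualification. Hence there is $\Lambda\in\mathbb R$ with $\tfrac12 DJ(\hat\rho)=\Lambda D_\rho\lambda_1(\hat\rho)$, i.e., upon comparing Riesz representatives in the Hilbert space $\mathbb{L}_{\mu}^{2}$,
\begin{equation*}
\hat\rho(x,y)=\bar\rho(x,y)-\Lambda\,\phi_1(x)\phi_1(y)\qquad\text{a.e. in }\mathbb{R}^N\times\mathbb{R}^N.
\end{equation*}

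I would then pin down the sign of $\Lambda$. If $\Lambda=0$ then $\hat\rho=\bar\rho$, whence $\lambda=\lambda_1(\hat\rho)=\lambda_1(\bar\rho)$, contradicting $\lambda>\lambda_1(\bar\rho)$. If $\Lambda<0$, then writing $\mathcal L[\hat\rho]=\mathcal L[\bar\rho]+\Lambda\,S[\Phi]$ and using positive semidefiniteness of the Riesz kernel $|x-y|^{-\mu}$ (so that $(S[\Phi]\psi,\psi)=\int_{\mathbb R^N}\int_{\mathbb R^N}\frac{(\phi_1\psi)(x)(\phi_1\psi)(y)}{|x-y|^{\mu}}\,dx\,dy\ge0$ for all $\psi$), one gets $(\mathcal L[\hat\rho]\psi,\psi)\le(\mathcal L[\bar\rho]\psi,\psi)$ for all $\psi$, hence $\lambda=\lambda_1(\hat\rho)\le\lambda_1(\bar\rho)$, again impossible. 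So $\Lambda>0$ and we set $\hat u:=\sqrt\Lambda\,\phi_1\in W$. Then $\hat u(x)\hat u(y)=\Lambda\phi_1(x)\phi_1(y)=\bar\rho(x,y)-\hat\rho(x,y)$, which is the claimed recovery formula; $\|\hat u\|_{L^2}=\sqrt\Lambda$ gives $\phi_1[\hat\rho]=\hat u/\|\hat u\|_{L^2}$; and $\lambda=\lambda_1(\hat\rho)$ is already known. Finally, since $S[\hat\rho]\phi_1(x)=S[\bar\rho]\phi_1(x)-\hat u(x)\!\int_{\mathbb R^N}\frac{\hat u(y)\phi_1(y)}{|x-y|^{\mu}}\,dy=S[\bar\rho]\phi_1(x)-\phi_1(x)\,(|x|^{-\mu}\ast|\hat u|^2)(x)$, multiplying the weak identity $\mathcal L[\hat\rho]\phi_1=\lambda\phi_1$ by $\sqrt\Lambda$ yields
\begin{equation*}
-\Delta\hat u+V\hat u-S[\bar\rho]\hat u+(|x|^{-\mu}\ast|\hat u|^2)\hat u=\lambda\hat u\quad\text{weakly in }W,
\end{equation*}
i.e., $\hat u$ is a weak solution of (\ref{eq:M}); all terms are well defined on $W$ by Lemma \ref{L2-10} and the Sobolev embeddings of Section 2.

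The main obstacle I expect is making the Lagrange multiplier step fully rigorous for the infinite–dimensional, non–smooth constraint $\lambda_1(\rho)=\lambda$: one must ensure the optimality condition holds in normal (non–abnormal) form and that the only active constraint direction is the single functional $h\mapsto\langle\Phi,h\rangle_{\mathbb{L}_{\mu}^{2}}$. This rests on the Gateaux differentiability of $\lambda_1$ (Lemma \ref{lem10}), the concavity of $\lambda_1$ and the resulting strict convexity of $\{\lambda_1\ge\lambda\}$ (Proposition \ref{prop1}), and a constraint qualification, available since $D_\rho\lambda_1(\hat\rho)\neq0$, equivalently since $\{\lambda_1>\lambda\}\neq\emptyset$ (as exploited in the proof of Lemma \ref{lem1}). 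A self–contained alternative avoiding abstract multiplier theorems is to combine the metric–projection variational inequality $\langle\bar\rho-\hat\rho,\rho-\hat\rho\rangle_{\mathbb{L}_{\mu}^{2}}\le0$ for all $\rho\in\{\lambda_1\ge\lambda\}$ with the concavity bound $\lambda_1(\rho)\le\lambda+D_\rho\lambda_1(\hat\rho)(\rho-\hat\rho)$ to show directly that $\bar\rho-\hat\rho$ is orthogonal to $\{h:\langle\Phi,h\rangle_{\mathbb{L}_{\mu}^{2}}=0\}$, hence a multiple of $\Phi$, the strict sign then following exactly as above.
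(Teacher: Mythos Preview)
Your proposal is correct and follows essentially the same route as the paper: obtain the IOP minimizer $\hat\rho$ from Lemma~\ref{lem1}, write the first-order optimality condition using the derivative formula of Lemma~\ref{lem10} to force $\bar\rho-\hat\rho=\Lambda\,\phi_1[\hat\rho]\otimes\phi_1[\hat\rho]$, and then rescale $\phi_1[\hat\rho]$ by $\sqrt\Lambda$ and substitute into the eigenvalue equation to recover (\ref{eq:M}). The only noteworthy difference is in how the positivity of the multiplier is obtained: the paper invokes the Kuhn--Tucker theorem for the inequality-constrained problem (\ref{MinP}), which yields $\mu_1,\mu_2\ge 0$ directly, whereas you treat the equality constraint and then rule out $\Lambda\le 0$ via the positive-definiteness of the Riesz kernel---a clean and self-contained alternative that avoids citing an abstract KKT result.
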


\begin{proof}
By Lemma~\ref{lem1}, there exists $\hat{\rho}\in \mathbb{L}_{\mu
}^{2}\setminus \{0\}$ solving minimization problem (\ref{MinP}). Due to the
convexity of the functionals $\Vert (\cdot )-\bar{\rho}\Vert _{2}^{2}$ and $%
(-\lambda _{1}(\cdot ))$, the Kuhn-Tucker Theorem (see, e.g., Zeidler \cite%
{Zeidler}, Theorem 47.E, p. 394) yields that there exist $\mu _{1},\mu
_{2}\in \mathbb{R}$, $|\mu _{1}|+|\mu _{2}|\neq 0$ such that the Lagrange
function
\begin{equation*}
\Lambda (\rho ):=\mu _{1}\Vert \rho -\bar{\rho}\Vert _{2}^{2}+\mu
_{2}(\lambda -\lambda _{1}(\rho )),
\end{equation*}%
satisfies the following conditions:
\begin{eqnarray}
&&\Lambda (\hat{\rho})=\min_{\rho \in \mathbb{L}_{\mu }^{2}}\Lambda (\rho )
\label{eq1} \notag\\
&&\mu _{2}(\lambda -\lambda _{1}(\hat{\rho}))=0,  \label{eq2} \\
&&\mu _{1},\mu _{2}\geq 0. \notag 
\end{eqnarray}%
Using Lemma \ref{lem10} we conclude that $\Lambda (\rho )$ is Gateaux
differentiable at any point $\rho \in \mathbb{L}_{\mu }^{2}$. Hence
\begin{equation}
2\mu _{1}\int (\hat{\rho}-\bar{\rho})h\,dx-\mu _{2}D_{\rho }\lambda _{1}(%
\hat{\rho})(h)=0,\text{ }\forall h\in C_{0}^{\infty },\text{ }h\geq 0.
\label{eq:ner}
\end{equation}%
Suppose that $\mu _{1}=0$. Then $\mu _{2}>0$. It is evidently that there
exists $h_{0}\in C_{0}^{\infty }$ such that $D_{\rho }\lambda _{1}(\hat{\rho}%
)(h_{0})>\neq 0$. Thus, by (\ref{eq:ner}) we obtain a contradiction
\begin{equation*}
-\mu _{2}D_{\rho }\lambda _{1}(\hat{\rho})(h_{0})=0.
\end{equation*}%
In the same manner, it is derived that $\mu _{2}\neq 0$, which by (\ref{eq2}%
) implies that $\lambda _{1}(\hat{\rho})=\lambda $. Hence, we may assume
that $\mu _{1}=1$. From (\ref{eq:ner}) and (\ref{eq:Val}) we have
\begin{equation}
\hat{\rho}(x,y)=\bar{\rho}(x,y)-\varpi \phi _{1}[\hat{\rho}](x)\phi _{1}[%
\hat{\rho}](y),  \label{eq:rhrho}
\end{equation}%
where $\varpi :=\frac{\mu _{2}}{2}>0$. Since
\begin{equation*}
-\Delta \phi _{1}[\hat{\rho}]+V(x)\phi _{1}[\hat{\rho}]-\int_{\mathbb{R}^{N}}%
\frac{\hat{\rho}(x,y)\phi _{1}[\hat{\rho}](y)}{|x-y|^{\mu }}dy=\lambda _{1}(%
\hat{\rho})\phi _{1}[\hat{\rho}],
\end{equation*}%
we derive by (\ref{eq:rhrho}) that
\begin{equation*}
-\Delta \phi _{1}[\hat{\rho}]+V(x)\phi _{1}[\hat{\rho}]-\int_{\mathbb{R}^{N}}%
\frac{{\bar{\rho}}(x,y)\phi _{1}[\hat{\rho}](y)}{|x-y|^{\mu }}dy+\varpi
\int_{\mathbb{R}^{N}}\frac{\phi _{1}[\hat{\rho}](y)\phi _{1}[\hat{\rho}](y)}{%
|x-y|^{\mu }}dy\phi _{1}[\hat{\rho}](x)=\lambda \phi _{1}[\hat{\rho}].
\end{equation*}%
Define $\hat{u}:=\hat{u}(\lambda ,\bar{\rho})=\varpi ^{1/2}\phi _{1}[\hat{%
\rho}]$. Then
\begin{equation*}
-\Delta \hat{u}+V(x)\hat{u}-\int_{\mathbb{R}^{N}}\frac{{\bar{\rho}}(x,y)\hat{%
u}(y)}{|x-y|^{\mu }}dy+\int_{\mathbb{R}^{N}}\frac{\hat{u}^{2}(y)}{|x-y|^{\mu
}}dy\hat{u}(x)=\lambda \hat{u},
\end{equation*}%
and
\begin{equation}
\hat{\rho}(x,y)=\bar{\rho}(x,y)-\hat{u}(x)\hat{u}(y).  \label{eq:39}
\end{equation}
\end{proof}

Observe, (\ref{eq:39}) implies
\begin{equation*}
\hat{u}^{2}(x)=\bar{\rho}(x,x)-\hat{\rho}(x,x)\quad \text{a.e. in }\mathbb{R}%
^{N}.
\end{equation*}%
Hence, $\bar{\rho}(x,x)-\hat{\rho}(x,x)\geq 0$ a.e. in $\mathbb{R}^{N}$ and
therefore
\begin{equation*}
|\hat{u}(x)|=\sqrt{\bar{\rho}(x,x)-\hat{\rho}(x,x)}\quad \text{a.e. in }%
\mathbb{R}^{N}.
\end{equation*}%
This completes the proof of Theorem~\ref{thm1}.

\section{Proof of Theorem \protect\ref{thm2}}

To prove that $\hat{u}$ is a ground state, it is sufficient to show that
equation (\ref{eq:M}) admits a \emph{nonnegative} ground state $\bar{u}\in W$%
. Once this is established, the uniqueness of the nonnegative solution in
the cone of non-negative solutions implies that the unique nonnegative
solution must coincide with this ground state. Hence $\hat{u}$, being the
unique nonnegative solution, is itself a ground state. The existence of a
nonnegative ground state is given by the following proposition.

\begin{proposition}
\label{propGS} Assume $0<\mu <\min \{N,4\}$. Let $\rho \in \mathbb{L}_{\mu
}^{2}(\mathbb{R}^{N})$ and $\lambda >\lambda _{1}(\rho )$. Then equation (%
\ref{eq:M}) possesses a ground state $\hat{u}_{g}\in W$. Moreover, $\hat{u}%
_{g}\in W$ is a stable solution.

If $\rho \geq 0$ a.e. in $\mathbb{R}^N\times \mathbb{R}^N$, the ground state
$\hat{u}_g$ is either positive or negative in $\mathbb{R}^{N}$.
\end{proposition}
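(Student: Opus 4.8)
The plan is to obtain $\hat u_{g}$ directly as a global minimizer of the energy $E_{\lambda}$ on $W$, and then to read off the remaining properties. First I would establish that $E_{\lambda}$ is bounded below, has a negative infimum, and is coercive on $W$. Since $\gamma>0$, the quartic Hartree--Fock term $D(u):=\int_{\mathbb{R}^{N}}(|x|^{-\mu}\ast|u|^{2})|u|^{2}\,dx$ is nonnegative, and testing with $t\phi_{1}[\rho]$ gives $E_{\lambda}(t\phi_{1}[\rho])=\tfrac{t^{2}}{2}(\lambda_{1}(\rho)-\lambda)\|\phi_{1}[\rho]\|_{L^{2}}^{2}+\tfrac{t^{4}}{4}D(\phi_{1}[\rho])<0$ for small $t>0$ because $\lambda>\lambda_{1}(\rho)$, so $\inf_{W}E_{\lambda}<0=E_{\lambda}(0)$. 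For coercivity I would argue by contradiction and rescaling: if $\|u_{n}\|_{W}\to\infty$ with $\sup_{n}E_{\lambda}(u_{n})<\infty$, set $v_{n}:=u_{n}/\|u_{n}\|_{W}$; by homogeneity of the three lower-order terms, $\tfrac14\|u_{n}\|_{W}^{2}D(v_{n})=E_{\lambda}(u_{n})/\|u_{n}\|_{W}^{2}-\tfrac12+\tfrac12(S[\rho]v_{n},v_{n})+\tfrac{\lambda}{2}\|v_{n}\|_{L^{2}}^{2}$, whose right-hand side is bounded above (using $(\ref{eq:HLitEst})$), so $D(v_{n})\to0$; passing to a subsequence with $v_{n}\rightharpoonup v$ in $W$ and $v_{n}\to v$ strongly in $L^{4N/(2N-\mu)}$ and $L^{2}$ via the compact embedding $W\hookrightarrow L^{4N/(2N-\mu)}$ (which holds because $2\le\tfrac{4N}{2N-\mu}<\tfrac{2N}{N-2}$ for $0<\mu<\min\{N,4\}$), Hardy--Littlewood--Sobolev (Lemma~\ref{L2-10}) gives $D(v)=\lim_{n}D(v_{n})=0$, hence $v=0$, hence $(S[\rho]v_{n},v_{n})\to0$ and $\|v_{n}\|_{L^{2}}\to0$; but then $E_{\lambda}(u_{n})/\|u_{n}\|_{W}^{2}\to\tfrac12$, contradicting $\limsup_{n}E_{\lambda}(u_{n})/\|u_{n}\|_{W}^{2}\le0$.

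With coercivity in hand, a minimizing sequence $(u_{n})$ is bounded in $W$, so along a subsequence $u_{n}\rightharpoonup\hat u_{g}$ in $W$ and $u_{n}\to\hat u_{g}$ strongly in $L^{4N/(2N-\mu)}$ and $L^{2}$. The term $\|\cdot\|_{W}^{2}$ is weakly lower semicontinuous, while $(S[\rho]u,u)$, $\|u\|_{L^{2}}^{2}$, and $D(u)$ are weakly continuous along the subsequence (Lemma~\ref{L2-10} together with the strong convergence), whence $E_{\lambda}(\hat u_{g})\le\liminf_{n}E_{\lambda}(u_{n})=\inf_{W}E_{\lambda}$; so $\hat u_{g}$ is a minimizer, and $\hat u_{g}\neq0$ because $\inf_{W}E_{\lambda}<0$. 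Since $E_{\lambda}\in C^{1}(W)$ and the minimization is unconstrained, $D_{u}E_{\lambda}(\hat u_{g})=0$, i.e.\ $\hat u_{g}$ is a weak solution of $(\ref{eq:M})$; and $E_{\lambda}(\hat u_{g})=\min_{W}E_{\lambda}\le E_{\lambda}(w)$ for every $w\in W$, in particular for every weak solution, so $\hat u_{g}$ is a ground state. Stability follows because for each $h\in W$ the map $t\mapsto E_{\lambda}(\hat u_{g}+th)$ is a real polynomial of degree at most four attaining its global minimum at $t=0$, whence $D_{uu}E_{\lambda}(\hat u_{g})(h,h)=\tfrac{d^{2}}{dt^{2}}\big|_{t=0}E_{\lambda}(\hat u_{g}+th)\ge0$.

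For the sign statement, assume $\rho\ge0$ a.e. Then $E_{\lambda}(|u|)\le E_{\lambda}(u)$ for every $u\in W$, because every term of $E_{\lambda}$ except $-\tfrac12(S[\rho]u,u)$ depends only on $|u|$ (using $\bigl|\nabla|u|\bigr|=|\nabla u|$ a.e.) and $(S[\rho]|u|,|u|)\ge(S[\rho]u,u)$ since the kernel is nonnegative; thus $|\hat u_{g}|$ is again a minimizer and we may take $\hat u_{g}\ge0$ (and then $-\hat u_{g}$ is also a ground state). By interior elliptic regularity $\hat u_{g}\in W^{2,2}_{\mathrm{loc}}(\mathbb{R}^{N})$, exactly as in the proof of Lemma~\ref{lem:2}, and rewriting $(\ref{eq:M})$ as
\begin{equation*}
-\Delta\hat u_{g}+\bigl(V+\gamma(|x|^{-\mu}\ast\hat u_{g}^{2})-\lambda\bigr)\hat u_{g}=S[\rho]\hat u_{g}\ge0,
\end{equation*}
where the zeroth-order potential lies in $L^{q}_{\mathrm{loc}}$ for some $q>N/2$ (this is where $0<\mu<\min\{N,4\}$ is used, through Hardy--Littlewood--Sobolev applied to $\hat u_{g}^{2}$), the weak Harnack inequality (Theorem~8.18 of \cite{Giltrud}) forces $\hat u_{g}>0$ a.e.\ in $\mathbb{R}^{N}$. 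Finally, any ground state $w$ solves the \emph{linear} eigenvalue problem $\bigl(-\Delta+(V+\gamma(|x|^{-\mu}\ast w^{2}))-S[\rho]\bigr)w=\lambda w$, and since $|w|$ is also a ground state with $|w|^{2}=w^{2}$ it solves the same linear problem; running the argument of Lemma~\ref{lem:2}(ii) for this self-adjoint operator (whose potential still tends to $+\infty$ and whose convolution part lies in $L^{q}_{\mathrm{loc}}$, $q>N/2$) shows its bottom eigenvalue is simple with a positive eigenfunction, and since $|w|>0$ is an eigenfunction at eigenvalue $\lambda$, the $\lambda$-eigenspace is one-dimensional, so $w=\pm|w|$; hence $w$ is positive or negative in $\mathbb{R}^{N}$.

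The step I expect to be the main obstacle is establishing coercivity of $E_{\lambda}$: the quadratic part $\tfrac12\|u\|_{W}^{2}-\tfrac12(S[\rho]u,u)-\tfrac{\lambda}{2}\|u\|_{L^{2}}^{2}$ need not be bounded below by a multiple of $\|u\|_{W}^{2}$ (the constant $C_{a}\|\rho\|_{\mathbb{L}_{\mu}^{2}}$ in $(\ref{eq:HLitEst})$ may exceed $1$), so coercivity has to be drawn out of the nonnegative quartic term, and the rescaling argument above works only because $W\hookrightarrow L^{4N/(2N-\mu)}$ is compact --- which is precisely where the restriction $\mu<\min\{N,4\}$ is essential. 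A secondary technical point is checking the $L^{q}_{\mathrm{loc}}$, $q>N/2$, integrability of the Schr\"odinger potential needed in the Harnack step, which again relies on $\mu<4$.
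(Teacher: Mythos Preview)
Your existence and stability arguments are essentially those of the paper: both normalise $v_{n}=u_{n}/\Vert u_{n}\Vert_{W}$, extract a weak limit in $W$, and exploit the compact embedding $W\hookrightarrow L^{4N/(2N-\mu)}$ to force a contradiction, after which the minimizer is obtained by the direct method and stability is read off from the second variation. One small slip: from your displayed identity you only get $\liminf_{n} E_{\lambda}(u_{n})/\Vert u_{n}\Vert_{W}^{2}\ge\tfrac12$, not convergence to $\tfrac12$, since the nonnegative term $\Vert u_{n}\Vert_{W}^{2}D(v_{n})$ is not shown to vanish; but this is already enough for the contradiction with $\limsup\le0$.

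The sign statement is where you take a genuinely different route. The paper decomposes $\hat u_{g}=\hat u_{g}^{+}+\hat u_{g}^{-}$ and, using $\rho\ge0$ together with the nonnegativity of the Hartree cross term, obtains $\hat E_{\lambda}\ge E_{\lambda}(\hat u_{g}^{+})+E_{\lambda}(\hat u_{g}^{-})\ge2\hat E_{\lambda}$ and declares a contradiction. You instead use $E_{\lambda}(|u|)\le E_{\lambda}(u)$ to produce a nonnegative minimizer, upgrade to strict positivity via the weak Harnack inequality (checking that $|x|^{-\mu}\ast \hat u_{g}^{2}\in L^{q}_{\mathrm{loc}}$ for some $q>N/2$, which is precisely where $\mu<4$ enters), and then show that \emph{every} ground state $w$ has a sign by noting that $w$ and $|w|$ both lie in the one-dimensional bottom eigenspace of the linear operator $-\Delta+V+\gamma(|x|^{-\mu}\ast w^{2})-S[\rho]$. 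Your argument is longer but yields more: it gives strict positivity directly and handles an arbitrary ground state, at the price of the regularity/Harnack machinery. The paper's decomposition argument is purely variational and avoids Harnack, but as written the inequality $\hat E_{\lambda}\ge2\hat E_{\lambda}$ is \emph{not} a contradiction when $\hat E_{\lambda}<0$, so your approach in fact fills a gap.
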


\begin{proof}
Let us show that $E_{\lambda }(u)$ is a coercive functional on $W$. Suppose $%
\Vert u_{n}\Vert _{W}\rightarrow +\infty $ as $n\rightarrow +\infty $.
Denote
\begin{equation*}
t_{n}=\Vert u_{n}\Vert _{W},\quad v_{n}=\frac{u_{n}}{\Vert u_{n}\Vert _{W}}%
,\quad n=1,2,\dots .
\end{equation*}%
Since $\Vert v_{n}\Vert _{W}=1$, $n=1,\ldots $, by the Banach-Alaoglu and
Sobolev theorems\ there exists a subsequence, still denoted $(v_{n})$, and
some $v\in W$ such that $v_{n}\rightharpoonup v$ weakly in $W$ and $%
v_{n}\rightarrow v$ strongly in $L^{2}(\mathbb{R}^{N}).$ If $v\not\equiv 0$,
the nonlinear term dominates and $E_{\lambda }(u_{n})\rightarrow +\infty $
as $n\rightarrow +\infty $. If $v=0$, then
\begin{equation*}
\frac{E_{\lambda }(t_{n}v_{n})}{\frac{t_{n}^{2}}{2}\int_{\mathbb{R}%
^{N}}|\nabla v_{n}|^{2}dx}\rightarrow 1\quad \text{as }n\rightarrow \infty
\end{equation*}%
implies again that $E_{\lambda }(u_{n})\rightarrow +\infty $ as $%
n\rightarrow +\infty $. Thus, $E_{\lambda }(u)$ is coercive on $W$.

Consider
\begin{equation*}
\hat{E}_{\lambda }[\rho ]:=\min_{u\in W}E_{\lambda }(u)  \label{eq:gs}
\end{equation*}%
Let $(u_{n})\subset W$ be a minimizing sequence of this problem. Since $%
E_{\lambda }(u)$ is coercive on $W$, the sequence is bounded in $W$. Hence
by the Banach-Alaoglu and Sobolev theorems\ there exists a subsequence,
still denoted $(u_{n})$, and some $\hat{u}_{g}\in W$ such that
\begin{equation*}
u_{n}\rightharpoonup \hat{u}_{g}\quad \text{weakly in }W\quad \text{and}%
\quad u_{n}\rightarrow \hat{u}_{g}\quad \text{strongly in }L^{2}(\mathbb{R}%
^{N}).
\end{equation*}%
Since $0<\mu <\min \{N,4\}$, $E_{\lambda }$ is weakly lower semicontinuous,
and therefore
\begin{equation*}
E_{\lambda }(\hat{u}_{g})\leq \liminf_{n\rightarrow \infty }E_{\lambda
}(u_{n})=\hat{E}_{\lambda }[\rho ].
\end{equation*}%
It is evident that $\hat{E}_{\lambda }[\rho ]<0$ for all $\lambda >\lambda
_{1}(\rho )$. Therefore, $\hat{u}_{g}\not\equiv 0$. Consequently, $%
E_{\lambda }(\hat{u}_{g})=\hat{E}_{\lambda }[\rho ]$, which implies that $%
D_{u}E_{\lambda }(\hat{u}_{g})=0$. Since $\hat{u}_{g}$ is a minimizer of $%
E_{\lambda }(u)$ in $W$, the second derivative of the functional is
non-negative at $\hat{u}_{g}:D_{uu}E_{\lambda }(\hat{u}_{g})(\psi ,\psi
)\geq 0\quad \forall \psi \in W.$ Thus, $\hat{u}_{g}$ is a \emph{stable
ground state}.

Let $\rho \geq 0$ a.e. in $\mathbb{R}^{N}\times \mathbb{R}^{N}$. Suppose,
contrary to our claim that
\begin{equation*}
\hat{u}_{g}^{+}(x)=\max \{u(x),0\}\not\equiv 0,\text{ }\hat{u}%
_{g}^{-}(x)=\min \{u(x),0\}\not\equiv 0\text{ in }\mathbb{R}^{N}.
\end{equation*}%
Hence
\begin{eqnarray*}
\hat{E}_{\lambda }[\rho ] &=&E_{\lambda }(\hat{u}_{g}^{+}+\hat{u}_{g}^{-}) \\
&=&E_{\lambda }(\hat{u}_{g}^{+})+E_{\lambda }(\hat{u}_{g}^{-})-\int_{\mathbb{%
R}^{N}}\frac{\rho (x,y)\hat{u}_{g}^{+}(x)\hat{u}_{g}^{-}(y)}{|x-y|^{\mu }}%
dxdy+\frac{1}{2}\int_{\mathbb{R}^{N}}(|x|^{-\mu }\ast |\hat{u}_{g}^{+}|^{2})|%
\hat{u}_{g}^{-}|^{2}dx \\
&\geq &E_{\lambda }(\hat{u}_{g}^{+})+E_{\lambda }(\hat{u}_{g}^{-})\geq 2\hat{%
E}_{\lambda }[\rho ].
\end{eqnarray*}%
We get a contradiction. Hence, $\hat{u}_{g}>0$ or $\hat{u}_{g}<0$ a.e. in $%
\mathbb{R}^{N}$. Since
\begin{equation*}
E_{\lambda }(\hat{u}_{g})=\min_{u\in W}E_{\lambda }(u),
\end{equation*}%
$D_{uu}E_{\lambda }(\hat{u}_{g})(h,h)\geq 0$ for any $h\in W$, whence $\hat{u%
}_{g}$ is a stable solution.
\end{proof}

\begin{proposition}
\label{Prop:4} Assume $\rho \geq 0$ in $\mathbb{R}^N\times \mathbb{R}^N$.
Let $\tilde{u} \in W$ be a nonnegative weak solution of equation~\eqref{eq:M}%
. Then the function
\begin{equation*}
\tilde{\rho}(x,y) = \rho(x,y) - \tilde{u}(x)\tilde{u}(y)
\end{equation*}
is a local minimizer of the functional
\begin{equation*}
\mathcal{P}(\theta) = \|\theta - \rho\|_{\mathbb{L}_{\mu}^{2}}^{2}
\end{equation*}
over the manifold $M_{\lambda}$.
\end{proposition}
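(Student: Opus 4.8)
The plan is to show that $\tilde u$ is the principal eigenfunction of the nonlocal operator $\mathcal{L}[\tilde\rho]$ with eigenvalue $\lambda$, so that $\tilde\rho$ lies on the level set $\{\theta\in\mathbb{L}_\mu^2:\lambda_1(\theta)=\lambda\}$, which is the boundary of $M_\lambda$; then the minimality of $\tilde\rho$ will follow from the concavity of $\lambda_1$ together with the gradient formula of Lemma~\ref{lem10}. For the first part, observe that the very definition $\tilde\rho(x,y)=\rho(x,y)-\tilde u(x)\tilde u(y)$ yields the pointwise identity $S[\rho]\tilde u=S[\tilde\rho]\tilde u+(|x|^{-\mu}\ast|\tilde u|^2)\tilde u$; substituting it into \eqref{eq:M} shows that $\tilde u$ weakly solves $\mathcal{L}[\tilde\rho]\tilde u=\lambda\tilde u$. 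By the Hardy--Littlewood--Sobolev inequality the kernel $(x,y)\mapsto\tilde u(x)\tilde u(y)$ belongs to $\mathbb{L}_\mu^2$, hence $\tilde\rho\in\mathbb{L}_\mu^2$, and testing the Rayleigh quotient \eqref{eq:lambda1} with $\tilde u$ gives $\lambda_1(\tilde\rho)\le\lambda$ at once.

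Since $\rho\ge0$ and $\tilde u\ge0$ force $S[\rho]\tilde u\ge0$, the equation can be recast as $-\Delta\tilde u+\bigl(V+|x|^{-\mu}\ast|\tilde u|^2-\lambda\bigr)\tilde u=S[\rho]\tilde u\ge0$, so the Harnack inequality / strong maximum principle (as in the proof of Lemma~\ref{lem:2}(ii)) gives $\tilde u>0$ a.e.\ in $\mathbb{R}^N$. The same sign condition also makes $\tilde u$ a stable solution: arguing as in the proof of Proposition~\ref{propGS}, $D_{uu}E_\lambda(\tilde u)(h,h)\ge\bigl((\mathcal{L}[\rho]+|x|^{-\mu}\ast|\tilde u|^2-\lambda)h,h\bigr)\ge0$ for all $h\in W$, because the operator $\mathcal{L}[\rho]+(|x|^{-\mu}\ast|\tilde u|^2)$ — whose exchange term is attractive as $\rho\ge0$ — admits $\tilde u>0$ as a ground state, with eigenvalue $\lambda$.

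The heart of the argument, and the step I expect to be the main obstacle, is to upgrade the information that $\lambda_1(\tilde\rho)\le\lambda$ and $\tilde u>0$ is a $\lambda$-eigenfunction of $\mathcal{L}[\tilde\rho]$ to the conclusion that $\lambda_1(\tilde\rho)=\lambda$ and $\phi_1[\tilde\rho]=\tilde u/\|\tilde u\|_{L^2}$ up to sign, i.e.\ that $\tilde u$ is a principal solution; this is what puts $\tilde\rho\in M_\lambda$. The route I would follow: a positive eigenfunction cannot be $L^2$-orthogonal to a nonnegative, nontrivial principal eigenfunction of $\mathcal{L}[\tilde\rho]$, so $\tilde u$ cannot belong to an eigenvalue strictly above $\lambda_1(\tilde\rho)$; and a nonnegative principal eigenfunction is produced, as in Lemma~\ref{lem:2}(ii) and the proof of Proposition~\ref{propGS}, by showing that $|\phi_1[\tilde\rho]|$ is again a minimizer of \eqref{eq:lambda1}. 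The delicate point is that $\tilde\rho$ need not be sign-definite, so $\mathcal{L}[\tilde\rho]$ carries no automatic Perron--Frobenius structure and the comparison underlying the $|\phi_1[\tilde\rho]|$ step must genuinely use the hypothesis $\rho\ge0$ together with the positivity and stability of $\tilde u$ from the previous paragraph.

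Granting this identification, the conclusion is immediate. Lemma~\ref{lem10} gives $D_\rho\lambda_1(\tilde\rho)(h)=-\|\tilde u\|_{L^2}^{-2}\int_{\mathbb{R}^N}\int_{\mathbb{R}^N}\frac{\tilde u(x)\tilde u(y)}{|x-y|^\mu}\,h(x,y)\,dxdy$ for all $h\in\mathbb{L}_\mu^2$, and the concavity of $\lambda_1$ (Proposition~\ref{prop23}) yields, for any $\theta\in M_\lambda$ (so $\lambda_1(\theta)\ge\lambda=\lambda_1(\tilde\rho)$),
\begin{equation*}
0\le\lambda_1(\theta)-\lambda_1(\tilde\rho)\le D_\rho\lambda_1(\tilde\rho)(\theta-\tilde\rho)=-\frac{1}{\|\tilde u\|_{L^2}^2}\int_{\mathbb{R}^N}\int_{\mathbb{R}^N}\frac{\tilde u(x)\tilde u(y)\bigl(\theta(x,y)-\tilde\rho(x,y)\bigr)}{|x-y|^\mu}\,dxdy,
\end{equation*}
hence $\int_{\mathbb{R}^N}\int_{\mathbb{R}^N}\frac{\tilde u(x)\tilde u(y)(\theta(x,y)-\tilde\rho(x,y))}{|x-y|^\mu}\,dxdy\le0$. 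Using $\tilde\rho(x,y)-\rho(x,y)=-\tilde u(x)\tilde u(y)$ and expanding $\|\theta-\rho\|_{\mathbb{L}_\mu^2}^2=\|(\theta-\tilde\rho)+(\tilde\rho-\rho)\|_{\mathbb{L}_\mu^2}^2$,
\begin{equation*}
\mathcal{P}(\theta)-\mathcal{P}(\tilde\rho)=\|\theta-\tilde\rho\|_{\mathbb{L}_\mu^2}^2-2\int_{\mathbb{R}^N}\int_{\mathbb{R}^N}\frac{\tilde u(x)\tilde u(y)\bigl(\theta(x,y)-\tilde\rho(x,y)\bigr)}{|x-y|^\mu}\,dxdy\ge\|\theta-\tilde\rho\|_{\mathbb{L}_\mu^2}^2\ge0
\end{equation*}
for every $\theta\in M_\lambda$. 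Thus $\tilde\rho$ minimizes $\mathcal{P}$ over $M_\lambda$, in particular it is a local minimizer; and since $\mathcal{P}$ is convex and $M_\lambda$ convex, it is in fact the global minimizer, in agreement with Lemma~\ref{lem1}.
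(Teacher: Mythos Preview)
Your overall architecture matches the paper's: show $\tilde\rho\in\mathbb{L}_\mu^2$, identify $\tilde u$ as the principal eigenfunction of $\mathcal{L}[\tilde\rho]$ so that $\tilde\rho\in M_\lambda$, and then use the derivative formula of Lemma~\ref{lem10} together with concavity to deduce minimality. The place where you diverge is precisely the step you flag as the ``main obstacle'', namely proving $\lambda_1(\tilde\rho)=\lambda$. The paper does not go through stability or through a $|\phi_1[\tilde\rho]|$ argument; instead it simply asserts that $\tilde\rho\ge0$ in $\mathbb{R}^N\times\mathbb{R}^N$, invokes Lemma~\ref{lem:2}(ii) to obtain a strictly positive principal eigenfunction $\phi_1[\tilde\rho]$, and then tests $\mathcal{L}[\tilde\rho]\tilde u=\lambda\tilde u$ against $\phi_1[\tilde\rho]$ and $\mathcal{L}[\tilde\rho]\phi_1[\tilde\rho]=\lambda_1(\tilde\rho)\phi_1[\tilde\rho]$ against $\tilde u$ to get $(\lambda-\lambda_1(\tilde\rho))\int\tilde u\,\phi_1[\tilde\rho]\,dx=0$, forcing $\lambda=\lambda_1(\tilde\rho)$. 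That is the whole trick; your detour through stability of $\tilde u$ as a critical point of $E_\lambda$ is not used.

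That said, your caution about the sign of $\tilde\rho$ is justified: the paper's claim that $\tilde\rho=\rho-\tilde u\otimes\tilde u\ge0$ is stated without proof and is plainly false in general (take $\rho\equiv0$, which the paper itself treats in Section~\ref{sec:conclusion}). So while the paper's route is shorter, it rests on an unsupported assertion, and your instinct that one must ``genuinely use $\rho\ge0$ together with the positivity of $\tilde u$'' to produce a nonnegative principal eigenfunction of $\mathcal{L}[\tilde\rho]$ is the honest way to frame the gap. A clean way to close it is the one you almost reach: the operator $A:=\mathcal{L}[\rho]+(|x|^{-\mu}\ast|\tilde u|^2)$ (multiplication by a nonnegative local potential) has exchange kernel $\rho\ge0$, so Lemma~\ref{lem:2}(ii) applies to $A$ and the positive eigenfunction $\tilde u$ must be its ground state; then the same testing argument with $\phi_1[\tilde\rho]$ works once one checks $\int\tilde u\,\phi_1[\tilde\rho]\neq0$. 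Finally, your concluding computation is an improvement on the paper's: the paper stops at a Taylor expansion and claims only local minimality, whereas your exact identity $\mathcal{P}(\theta)-\mathcal{P}(\tilde\rho)=\|\theta-\tilde\rho\|_{\mathbb{L}_\mu^2}^2-2\langle\tilde u\otimes\tilde u,\theta-\tilde\rho\rangle_{\mathbb{L}_\mu^2}\ge\|\theta-\tilde\rho\|_{\mathbb{L}_\mu^2}^2$ gives global minimality on $M_\lambda$ directly.
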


\begin{proof}
Let $\tilde{u}\in W$ be a nonnegative weak solution of equation (\ref{eq:M}%
). Then
\begin{equation*}
(\mathcal{L}[\rho ]\tilde{u},\tilde{u})+\iint_{\mathbb{R}^{N}\times \mathbb{R%
}^{N}}\frac{\tilde{u}^{2}(x)\tilde{u}^{2}(y)}{|x-y|^{\mu }}dxdy=0.
\end{equation*}%
This implies that the function $(x,y)\mapsto \tilde{u}(x)\tilde{u}(y)$
belongs to $\mathbb{L}_{\mu }^{2}$, and therefore,
\begin{equation*}
\tilde{\rho}(x,y)=\rho (x,y)-\tilde{u}(x)\tilde{u}(y),\quad \forall (x,y)\in
\mathbb{R}^{N}\times \mathbb{R}^{N}.
\end{equation*}%
belongs to $\mathbb{L}_{\mu }^{2}$, and $\tilde{\rho}\geq 0$ in $\mathbb{R}%
^{N}\times \mathbb{R}^{N}$. Hence, by Lemma \ref{lem:2}, the operator
\begin{equation*}
\mathcal{L}[\tilde{\rho}]:=-\Delta +V-S[\tilde{\rho}]
\end{equation*}%
has a simple principal eigenvalue $\lambda _{1}(\tilde{\rho})\in (0,+\infty
) $, with a unique normalized eigenfunction $\phi _{1}[\tilde{\rho}]\in
W\setminus \{0\}$ satisfying
\begin{equation}
\mathcal{L}[\tilde{\rho}]\phi _{1}[\tilde{\rho}]=\lambda _{1}(\tilde{\rho}%
)\phi _{1}[\tilde{\rho}]  \label{eq:eigentild}
\end{equation}%
in $\mathbb{R}^{N}$, and $\phi _{1}[\tilde{\rho}]>0$ a.e. in $\mathbb{R}^{N}$%
.

On the other hand, since $\tilde{u}\in W$ is a nonnegative weak solution of (%
\ref{eq:M}), we have
\begin{equation*}
\mathcal{L}[\tilde{\rho}]\tilde{u}=\lambda \tilde{u}.
\end{equation*}%
Testing this equation with $\phi _{1}[\tilde{\rho}]$, testing (\ref%
{eq:eigentild}) with $\tilde{u}$, integrating by parts, and subtracting
yields
\begin{equation*}
(\lambda -\lambda _{1}(\tilde{\rho}))\int_{\mathbb{R}^{N}}\tilde{u}(x)\phi
_{1}[\tilde{\rho}](x)dx=0.
\end{equation*}%
Since $\tilde{u}\geq 0$ and $\phi _{1}[\tilde{\rho}]>0$ a.e. in $\mathbb{R}%
^{N}$, it follows that
\begin{equation*}
\lambda =\lambda _{1}(\tilde{\rho}).
\end{equation*}%
Thus, $\tilde{\rho}\in \partial M_{\lambda }$, and we may assume (up to
normalization) that $\tilde{u}=\phi _{1}[\tilde{\rho}]$.

By Lemma \ref{lem10} the functional $\lambda _{1}(\tilde{\rho})$ admits the
directional derivative, and since $\tilde{u}=\phi _{1}[\tilde{\rho}]$,
\begin{equation}
D_{\rho }\lambda _{1}(\tilde{\rho})(h)=-\frac{1}{\Vert \tilde{u}\Vert
_{L^{2}}^{2}}\int_{\mathbb{R}^{N}}\int_{\mathbb{R}^{N}}\frac{\tilde{u}(x)%
\tilde{u}(y)}{|x-y|^{\mu }}h(x,y)dxdy,\text{ }\forall h\in \mathbb{L}_{\mu
}^{2}.  \label{eq:dirtil}
\end{equation}%
By Proposition~\ref{prop1}, the map
\begin{equation*}
\lbrack 0,1)\ni t\mapsto \lambda _{1}\left( t(\tilde{\rho}+h)+(1-t)\tilde{%
\rho}\right)
\end{equation*}%
is concave. Therefore,
\begin{equation*}
\lambda _{1}(\tilde{\rho}+th)-\lambda _{1}(\tilde{\rho})\geq t\left( \lambda
_{1}(\tilde{\rho}+h)-\lambda _{1}(\tilde{\rho})\right) \geq 0,\text{ }%
\forall h\in \mathbb{L}_{\mu }^{2},\text{ }\tilde{\rho}+h\in M_{\lambda }.
\end{equation*}%
Consequently, by (\ref{eq:dirtil}),
\begin{equation}
-\iint_{\mathbb{R}^{N}\times \mathbb{R}^{N}}\frac{\tilde{u}(x)\tilde{u}(y)}{%
|x-y|^{\mu }}h(x,y)dxdy=\lim_{t\rightarrow 0^{+}}\frac{\lambda _{1}(\tilde{%
\rho}+th)-\lambda _{1}(\tilde{\rho})}{t}\geq 0,\text{ }\forall h\in \mathbb{L%
}_{\mu }^{2},\text{ }\tilde{\rho}+h\in M_{\lambda }.  \label{eq:lambgeq0}
\end{equation}%
Now, by $\tilde{\rho}(x,y)=\rho (x,y)-\tilde{u}(x)\tilde{u}(y)$ and (\ref%
{eq:lambgeq0}) we obtain
\begin{eqnarray*}
D_{\tilde{\rho}}\Vert \tilde{\rho}-\rho \Vert _{\mathbb{L}_{\mu
}^{2}}^{2}(h)|_{\rho =\tilde{\rho}+\tilde{u}(x)\tilde{u}(y)} &=&2\iint_{%
\mathbb{R}^{N}\times \mathbb{R}^{N}}\frac{(\tilde{\rho}(x,y)-\rho
(x,y))h(x,y)}{|x-y|^{\mu }}dxdy \\
&=&-2\iint_{\mathbb{R}^{N}\times \mathbb{R}^{N}}\frac{\tilde{u}(x)\tilde{u}%
(y)}{|x-y|^{\mu }}h(x,y)dxdy\geq 0
\end{eqnarray*}%
for all $h\in \mathbb{L}_{\mu }^{2}$ such that $\tilde{\rho}+h\in M_{\lambda
}$.

By the Taylor expansion,
\begin{equation*}
\|\tilde{\rho} + h - \rho\|_{\mathbb{L}_{\mu}^2}^2 = \|\tilde{\rho} -
\rho\|_{\mathbb{L}_{\mu}^2}^2 + D_{\tilde{\rho}} \|\tilde{\rho} - \rho\|_{%
\mathbb{L}_{\mu}^2}^2 (h) + o(\|h\|_{\mathbb{L}_{\mu}^2})
\end{equation*}
as $\|h\|_{\mathbb{L}_{\mu}^2} \to 0$. Therefore, for sufficiently small $%
\|h\|_{\mathbb{L}_{\mu}^2}$ with $\tilde{\rho} + h \in M_{\lambda}$,
\begin{equation*}
\|\tilde{\rho} + h - \rho\|_{\mathbb{L}_{\mu}^2}^2 \geq \|\tilde{\rho} -
\rho\|_{\mathbb{L}_{\mu}^2}^2.
\end{equation*}
This shows that $\tilde{\rho}$ is a local minimizer of $\mathcal{P}(\theta)
= \|\theta - \rho\|_{\mathbb{L}_{\mu}^2}^2$ on $M_{\lambda}$.
\end{proof}

Now, we conclude the proof of Theorem~\ref{thm2}. According to Proposition~%
\ref{propGS}, equation (\ref{eq:M}) possesses the positive ground state $%
\hat{u}_{g}$ in $\mathbb{R}^{N}$. Suppose there exists another ground state $%
\tilde{u}\neq \hat{u}_{g}$ of equation (\ref{eq:M}) that is also positive
a.e. in $\mathbb{R}^{N}$.

By Proposition~\ref{Prop:4}, the functions
\begin{equation*}
\tilde{\rho}(x,y):=\bar{\rho}(x,y)-\tilde{u}(x)\tilde{u}(y)\quad \text{and}%
\quad \hat{\rho}_{g}(x,y):=\bar{\rho}(x,y)-\hat{u}_{g}(x)\hat{u}_{g}(y)
\end{equation*}%
are local minimizers of $\mathcal{P}$ on the manifold $\tilde{M}_{\lambda }$%
. Since both the functional $\mathcal{P}$ and the manifold $\tilde{M}%
_{\lambda }$ are strictly convex, the minimizer of $\mathcal{P}$ on $\tilde{M%
}_{\lambda }$ is unique. Thus, $\tilde{\rho}=\hat{\rho}_{g}$, which implies $%
\tilde{u}=\hat{u}_{g}$. Hence, the ground state is unique. Similarly,
\begin{equation*}
\hat{\rho}(x,y)=\bar{\rho}(x,y)-\hat{u}(x)\hat{u}(y)
\end{equation*}%
is a minimizer of $\mathcal{P}$ on the manifold $\tilde{M}_{\lambda }$; it
follows that $\hat{\rho}(x,y)=\hat{\rho}_{g}$. Hence the principal solution $%
\hat{u}\in W$ coincides with the ground state of equation (\ref{eq:M}), and
therefore it is positive.

\section{Proof of Theorem \protect\ref{thm3}}

First we prove

\begin{lemma}
\label{lem51}

(i) The map $\mathbb{L}^{2}_{\mu} \ni \bar{\rho} \mapsto \hat{\rho}(\lambda,%
\bar{\rho}) \in \mathbb{L}^{2}_{\mu}$ is continuous for any $\lambda \geq
\lambda_1(\bar{\rho})$;

(ii) The map $[\lambda _{1}(\bar{\rho}),+\infty )\ni \lambda \mapsto \hat{%
\rho}(\lambda ,\bar{\rho})\in \mathbb{L}_{\mu }^{2}$ is continuous for any $%
\bar{\rho}\in \mathbb{L}_{\mu }^{2}$.
\end{lemma}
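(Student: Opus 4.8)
The plan is to exploit the two characterizations of $\hat\rho(\lambda,\bar\rho)$ established so far: it is the unique minimizer of $\|\bar\rho-\rho\|_{\mathbb{L}_\mu^2}^2$ over the strictly convex set $\tilde M_\lambda=\{\rho:\lambda_1(\rho)\ge\lambda\}$ (Lemma~\ref{lem1}), equivalently the metric projection of $\bar\rho$ onto $\tilde M_\lambda$, and it has the explicit form $\hat\rho=\bar\rho-\hat u(\lambda,\bar\rho)\,\hat u(\lambda,\bar\rho)^{\otimes}$ with $\hat u$ a principal solution (Lemma~\ref{lem2}). For part~(i), fix $\lambda$ and take $\rho_n\to\bar\rho$ in $\mathbb{L}_\mu^2$; I would first show the sequence $\hat\rho_n:=\hat\rho(\lambda,\rho_n)$ is bounded, since $\|\rho_n-\hat\rho_n\|_{\mathbb{L}_\mu^2}=\hat{\mathcal P}(\lambda,\rho_n)^{1/2}\le\|\rho_n-\zeta\|_{\mathbb{L}_\mu^2}$ for any fixed $\zeta\in\tilde M_\lambda$ (take the $\zeta$ constructed in Lemma~\ref{lem1}), and the right-hand side is bounded. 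By reflexivity of $\mathbb{L}_\mu^2$ a subsequence satisfies $\hat\rho_n\rightharpoonup\theta$ weakly; weak upper semicontinuity of $\lambda_1$ (Propositions~\ref{prop2}, \ref{prop1}) gives $\lambda_1(\theta)\ge\limsup\lambda_1(\hat\rho_n)\ge\lambda$, so $\theta\in\tilde M_\lambda$, and weak lower semicontinuity of the norm together with $\|\rho_n-\hat\rho_n\|\to\hat{\mathcal P}(\lambda,\bar\rho)$ (which holds because $\hat{\mathcal P}(\lambda,\cdot)$ is easily seen to be continuous — it is $1$-Lipschitz in $\bar\rho$ in the $\mathbb{L}_\mu^2$-metric, since it is a distance-to-a-fixed-set functional) forces $\|\bar\rho-\theta\|=\hat{\mathcal P}(\lambda,\bar\rho)$; by uniqueness of the minimizer $\theta=\hat\rho(\lambda,\bar\rho)$. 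To upgrade weak to strong convergence I would use the parallelogram/uniform-convexity argument: $\|\bar\rho-\hat\rho_n\|^2+\|\hat\rho_n-\hat\rho\|^2/2\le\cdots$; more directly, $\limsup\|\bar\rho-\hat\rho_n\|\le\|\bar\rho-\hat\rho\|=\liminf\|\bar\rho-\hat\rho_n\|$ gives norm convergence $\|\bar\rho-\hat\rho_n\|\to\|\bar\rho-\hat\rho\|$, and combined with weak convergence in the Hilbert space $\mathbb{L}_\mu^2$ this yields $\hat\rho_n\to\hat\rho$ strongly. Since the limit is independent of the subsequence, the whole sequence converges.

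Part~(ii) is structurally the same, with $\bar\rho$ fixed and $\lambda_n\to\lambda\in[\lambda_1(\bar\rho),+\infty)$. The one extra point is that the constraint set $\tilde M_{\lambda_n}$ now varies with $n$, so the boundedness and the recovery of a feasible limit need a little care. For an upper bound I would note that the sets $\tilde M_{\lambda_n}$ are nested-by-inclusion in $\lambda$ and use a rescaling as in Lemma~\ref{lem1}: picking $\rho_0$ with $\lambda_1(\rho_0)>0$, the element $\tfrac{\lambda_n}{\lambda_1(\rho_0)}\rho_0\in\tilde M_{\lambda_n}$ has norm bounded uniformly for $\lambda_n$ in a bounded set, so $\hat{\mathcal P}(\lambda_n,\bar\rho)$ and hence $\|\hat\rho(\lambda_n,\bar\rho)\|$ are bounded. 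Extracting a weak limit $\theta$, upper semicontinuity of $\lambda_1$ gives $\lambda_1(\theta)\ge\limsup\lambda_1(\hat\rho(\lambda_n,\bar\rho))=\limsup\lambda_n=\lambda$, so $\theta\in\tilde M_\lambda$. The remaining step is to show $\|\bar\rho-\theta\|^2\le\hat{\mathcal P}(\lambda,\bar\rho)$, for which I would produce, for the minimizer $\hat\rho(\lambda,\bar\rho)$, nearby feasible competitors for each $\lambda_n$: rescale $\hat\rho(\lambda,\bar\rho)$ by $\lambda_n/\lambda$ (when $\lambda>0$; the case $\lambda=\lambda_1(\bar\rho)\le 0$ uses $\hat\rho=\bar\rho$) to get $\rho_n':=\tfrac{\lambda_n}{\lambda}\hat\rho(\lambda,\bar\rho)\in\tilde M_{\lambda_n}$ with $\rho_n'\to\hat\rho(\lambda,\bar\rho)$, whence $\|\bar\rho-\hat\rho(\lambda_n,\bar\rho)\|\le\|\bar\rho-\rho_n'\|\to\|\bar\rho-\hat\rho(\lambda,\bar\rho)\|$. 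Passing to the liminf with weak lower semicontinuity identifies $\theta=\hat\rho(\lambda,\bar\rho)$ by uniqueness, and the norm-convergence-plus-weak-convergence argument again promotes this to strong convergence in $\mathbb{L}_\mu^2$.

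The main obstacle I anticipate is the continuity (indeed Lipschitz-type control) of the value function $\hat{\mathcal P}(\lambda,\bar\rho)$ as the data $(\lambda,\bar\rho)$ vary, and more precisely the construction of the approximating feasible sequences $\rho_n'\in\tilde M_{\lambda_n}$ in part~(ii): the rescaling trick works cleanly only when the relevant principal eigenvalues are positive, so one must treat separately the degenerate regime near $\lambda=\lambda_1(\bar\rho)$ (where, if $\lambda_1(\bar\rho)\le 0$, the minimizer may be $\bar\rho$ itself and $\hat{\mathcal P}=0$) and check that the rescaled kernels still lie in $\tilde M_{\lambda_n}$ using the homogeneity relation $\lambda_1(t\rho)=t\,\lambda_1(\rho)$ valid only once $\lambda_1(\rho)>0$. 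Everything else — boundedness, weak compactness, (semi)continuity of $\lambda_1$, and the Hilbert-space upgrade from weak to strong convergence — is routine given the results already proved. I also note that once (i) and (ii) are established, the two bullet-point continuity claims in Theorem~\ref{thm3}(iii) for the solution map $\rho\mapsto\hat u(\lambda,\rho)$ and $\lambda\mapsto\hat u(\lambda,\bar\rho)$ follow immediately from Theorem~\ref{thm1}(iii), since $|\hat u(x)|^2=\bar\rho(x,x)-\hat\rho(x,x)$ ties the $L^2$-type convergence of $\hat\rho$ on the diagonal to convergence of $\hat u$, with the $W$-convergence then coming from the equation and the compact embeddings as in Proposition~\ref{prop3}.
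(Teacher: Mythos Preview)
Your treatment of part~(i) is essentially the paper's argument, and your observation that $\hat{\mathcal P}(\lambda,\cdot)^{1/2}$ is $1$-Lipschitz, being the distance from $\bar\rho$ to the fixed closed convex set $\tilde M_\lambda$, is in fact cleaner than the paper's upper-semicontinuity step.

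Part~(ii), however, contains a genuine gap. The competitor $\rho_n':=\tfrac{\lambda_n}{\lambda}\,\hat\rho(\lambda,\bar\rho)$ is supposed to lie in $\tilde M_{\lambda_n}$ via the relation $\lambda_1(t\rho)=t\,\lambda_1(\rho)$, but this relation is \emph{false} for $\mathcal L[\rho]=-\Delta+V-S[\rho]$: only the term $S[\rho]$ scales with $\rho$, while the fixed Schr\"odinger part $-\Delta+V$ destroys any homogeneity. In the Rayleigh quotient, $\lambda_1(t\rho)=\inf_\psi\bigl(\|\psi\|_W^2 - t(S[\rho]\psi,\psi)\bigr)/\|\psi\|_{L^2}^2$ is concave in $t$ but not linear through the origin. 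Your hedging about the sign of $\lambda_1(\rho)$ therefore misidentifies the obstacle; the rescaling simply does not place $\rho_n'$ in $\tilde M_{\lambda_n}$.

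The repair is easy with the tools already available. For $\lambda_n\le\lambda$ take $\rho_n'=\hat\rho(\lambda,\bar\rho)$, since $\tilde M_\lambda\subset\tilde M_{\lambda_n}$. For $\lambda_n>\lambda$ use Lemma~\ref{lem10}: along $h=-\phi_1[\hat\rho]\otimes\phi_1[\hat\rho]$ one has $D_\rho\lambda_1(\hat\rho)(h)>0$, and by concavity the map $t\mapsto\lambda_1(\hat\rho+th)$ is continuous and strictly increasing near $t=0$, so for $\lambda_n$ close to $\lambda$ there is $t_n\to 0^+$ with $\hat\rho+t_nh\in\tilde M_{\lambda_n}$ and $\hat\rho+t_nh\to\hat\rho$ in $\mathbb L_\mu^2$. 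With these competitors your weak-limit/uniqueness/norm-convergence scheme goes through. The paper's proof of~(ii) does not use rescaling; it asserts directly that $\hat{\mathcal P}(\lambda,\bar\rho)\ge\limsup_n\hat{\mathcal P}(\lambda_n,\bar\rho)$, which amounts to the same right-continuity in $\lambda$ and is justified by precisely this perturbation argument together with the monotonicity of $\lambda\mapsto\hat{\mathcal P}(\lambda,\bar\rho)$.
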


\begin{proof}
Let us prove (i). Assume that $\lambda \in (\lambda _{1}(\bar{\rho}),+\infty
)$, and $\rho _{n}\rightarrow \bar{\rho}$ in $\mathbb{L}_{\mu }^{2}$ as $%
n\rightarrow \infty $. Recall that, by Proposition~\ref{prop2}, the
functional $\lambda _{1}(\cdot )$ is upper semicontinuous on $\mathbb{L}%
_{\mu }^{2}$. Hence $\lambda >\lambda _{1}(\rho _{n})$ for all sufficiently
large $n$, and therefore, by Lemma \ref{lem1} there exists a unique
minimizer $\hat{\rho}(\rho _{n})$ of
\begin{equation*}
\hat{\mathcal{P}}(\lambda ,\rho _{n}):=\Vert \rho _{n}-\hat{\rho}(\rho
_{n})\Vert _{\mathbb{L}_{\mu }^{2}}^{2}=\min \left\{ \Vert \rho _{n}-\rho
\Vert _{\mathbb{L}_{\mu }^{2}}^{2}\text{ }|\text{\ }\rho \in \mathbb{L}_{\mu
}^{2},\text{ }\lambda _{1}(\rho )\geq \lambda \right\} .
\end{equation*}

Using that $\lambda _{1}(\rho )$ is upper semicontinuous on $\mathbb{L}_{\mu
}^{2}$, it follows similarly to the proof of Proposition \ref{prop2} that $%
\hat{\mathcal{P}}(\lambda ,\cdot )$ is an upper semicontinuous functional on
$\mathbb{L}_{\mu }^{2}$ and therefore
\begin{equation}
\hat{\mathcal{P}}(\lambda ,\bar{\rho})\geq \limsup_{n\rightarrow +\infty }%
\hat{\mathcal{P}}(\lambda ,\rho _{n})\equiv \limsup_{n\rightarrow +\infty
}\Vert \hat{\rho}(\rho _{n})-\rho _{n}\Vert _{\mathbb{L}_{\mu }^{2}}.
\label{eq:mathcaP_conv}
\end{equation}%
Hence, the sequence $(\hat{\rho}(\rho _{n}))$ is bounded in $\mathbb{L}_{\mu
}^{2}$, and therefore, the reflexivity of $\mathbb{L}_{\mu }^{2}$ yields
that there exists a subsequence, still denoted by $(\hat{\rho}(\rho _{n}))$,
such that $\hat{\rho}(\rho _{n})\rightharpoonup \check{\rho}$ weakly in $%
\mathbb{L}_{\mu }^{2}$ for some $\check{\rho}\in \mathbb{L}_{\mu }^{2}$.
Analyzing similarly to the proof of Lemma \ref{lem1} we conclude that $%
\check{\rho}\neq 0$ and $\check{\rho}\in \tilde{M}_{\lambda }$.

It is easily seen that
\begin{equation}
\Vert \check{\rho}-\bar{\rho}\Vert _{\mathbb{L}_{\mu }^{2}}\leq
\liminf_{n\rightarrow +\infty }\Vert \hat{\rho}(\rho _{n})-\rho _{n}\Vert _{%
\mathbb{L}_{\mu }^{2}}.  \label{eq:52}
\end{equation}%
Hence by (\ref{eq:mathcaP_conv}) we derive
\begin{equation*}
\Vert \check{\rho}-\bar{\rho}\Vert _{\mathbb{L}_{\mu }^{2}}\leq \hat{%
\mathcal{P}}(\lambda ,\bar{\rho}).
\end{equation*}%
Since $\check{\rho}\in \tilde{M}_{\lambda }$, equality must hold: $\Vert
\check{\rho}-\bar{\rho}\Vert _{\mathbb{L}_{\mu }^{2}}=\hat{\mathcal{P}}%
(\lambda ,\bar{\rho})$. By the uniqueness of the minimizer of (\ref{eq:p}),
this implies that $\check{\rho}=\hat{\rho}(\bar{\rho})$. Moreover, (\ref%
{eq:mathcaP_conv}) and (\ref{eq:52}) yield
\begin{equation*}
\Vert \hat{\rho}(\bar{\rho})-\bar{\rho}\Vert _{\mathbb{L}_{\mu
}^{2}}=\lim_{n\rightarrow +\infty }\Vert \hat{\rho}(\rho _{n})-\rho
_{n}\Vert _{\mathbb{L}_{\mu }^{2}},
\end{equation*}%
and thus,
\begin{equation*}
\hat{\rho}(\rho _{n})\rightarrow \hat{\rho}(\bar{\rho})\quad \text{in}\quad
\mathbb{L}_{\mu }^{2}.  \label{eq:53}
\end{equation*}%
Since $\hat{\rho}(\bar{\rho})$ is uniquely defined, this implies that the
map $\mathbb{L}_{\mu }^{2}\ni \bar{\rho}\mapsto \hat{\rho}(\lambda ,\bar{\rho%
})\in \mathbb{L}_{\mu }^{2}$ is continuous.

Now we prove (ii). Assume that $\bar{\rho}\in \mathbb{L}_{\mu }^{2}$ and $%
\lambda >\lambda _{1}(\bar{\rho})$, and $\lambda _{n}\rightarrow \lambda $
as $n\rightarrow \infty $. Then $\lambda _{n}>\lambda _{1}(\bar{\rho})$ for
all sufficiently large $n$. Hence, by Lemma \ref{lem1} there exists a unique
minimizer $\hat{\rho}(\lambda _{n}):=\hat{\rho}(\lambda _{n},\bar{\rho})$ of
\begin{equation*}
\hat{\mathcal{P}}(\lambda _{n},\bar{\rho}):=\Vert \bar{\rho}-\hat{\rho}%
(\lambda _{n})\Vert _{\mathbb{L}_{\mu }^{2}}^{2}=\min \left\{ \Vert \bar{\rho%
}-\rho \Vert _{\mathbb{L}_{\mu }^{2}}^{2}\text{ }|\text{\ }\rho \in \mathbb{L%
}_{\mu }^{2},\text{ }\lambda _{1}(\rho )\geq \lambda _{n}\right\} .
\end{equation*}%
As before, it can be shown that there exists a weak limit point $\breve{\rho}%
\in \mathbb{L}_{\mu }^{2}\setminus \{0,\bar{\rho}\}$ of the sequence $(\hat{%
\rho}(\lambda _{n}))$ in $\mathbb{L}_{\mu }^{2}$, and
\begin{equation*}
\hat{\mathcal{P}}(\lambda ,\bar{\rho})\geq \limsup_{n\rightarrow +\infty }%
\hat{\mathcal{P}}(\lambda _{n},\bar{\rho}).
\end{equation*}%
Therefore,
\begin{equation*}
\Vert \breve{\rho}-\bar{\rho}\Vert _{\mathbb{L}_{\mu }^{2}}\leq
\liminf_{n\rightarrow +\infty }\Vert \hat{\rho}(\lambda _{n})-\bar{\rho}%
\Vert _{\mathbb{L}_{\mu }^{2}}\leq \hat{\mathcal{P}}(\lambda ,\bar{\rho}).
\label{eq:58}
\end{equation*}

Since the functional $\lambda(\cdot)$ is upper semicontinuous on $\mathbb{L}%
_{\mu}^{2}$, it follows that
\begin{equation*}
\lambda(\breve{\rho})\geq \limsup_{n\to+\infty}\lambda(\hat{\rho}%
(\rho_n))\geq \lambda.
\end{equation*}
Thus, $\breve{\rho} \in \tilde{M}_{\lambda}$, and therefore
\begin{equation*}
\|\breve{\rho} -\bar{\rho}\|_{\mathbb{L}^2_{\mu }}= \liminf_{n\to +\infty}\|%
\hat{\rho}(\rho_n)-\bar{\rho}\|_{\mathbb{L}^2_{\mu }}= \hat{\mathcal{P}}%
(\lambda,\bar{\rho}).
\end{equation*}
This indicates that $\hat{\rho}(\lambda_n) \to \breve{\rho}$ in $\mathbb{L}%
^2_{\mu}$ as $n \to +\infty$. The remainder of the proof follows as before.
\end{proof}

Let us prove assertion (i) of Theorem \ref{thm3}. Let $\lambda \in (\lambda
_{1}(\bar{\rho}),+\infty )$ and $\rho _{n}\rightarrow \bar{\rho}$ in $%
\mathbb{L}_{\mu }^{2}$ as $n\rightarrow \infty $. By Lemma \ref{lem51} there
exists a subsequence, again denoted by $(\hat{\rho}(\rho _{n}))$, and
normalized eigenfunctions $\phi _{1}[\hat{\rho}(\rho _{n})]$ of $\mathcal{L}[%
\hat{\rho}(\rho _{n})]$, and $\phi _{1}[\hat{\rho}(\bar{\rho})]$ of $%
\mathcal{L}[\hat{\rho}(\bar{\rho})]$ such that
\begin{eqnarray*}
&&\phi _{1}[\hat{\rho}(\rho _{n})]\rightarrow \phi _{1}[\hat{\rho}(\bar{\rho}%
)]\text{ in }W\text{ as }n\rightarrow +\infty ,  \label{eq:phi_conv} \\
&&\lambda _{1}(\hat{\rho}(\rho _{n}))\rightarrow \lambda _{1}(\hat{\rho}(%
\bar{\rho}))\text{ as }n\rightarrow +\infty .  \label{eq:lamb_conv}
\end{eqnarray*}%
Note that by Lemma \ref{lem2}
\begin{eqnarray*}
&&\phi _{1}[\hat{\rho}(\rho _{n})](x)=\frac{\hat{u}(\lambda ,\rho _{n})(x)}{%
\Vert \hat{u}(\lambda ,\rho _{n})\Vert _{L^{2}}}\quad \text{a.e. in }\mathbb{%
R}^{N},\text{ }n=1,\ldots , \\
&&\lambda =\lambda _{1}(\hat{\rho}(\bar{\rho})).
\end{eqnarray*}%
Denote $t_{n}=\Vert \hat{u}(\lambda ,\rho _{n})\Vert _{W}$, $n=1,\ldots $.
Then
\begin{eqnarray*}
&&\int_{\mathbb{R}^{N}}(|\nabla \phi _{1}[\hat{\rho}(\rho
_{n})]|^{2}+V(x)|\phi _{1}[\hat{\rho}(\rho _{n})]|^{2})dx-\int_{\mathbb{R}%
^{N}}\int_{\mathbb{R}^{N}}\frac{\hat{\rho}(\rho _{n})(x,y)\phi _{1}[\hat{\rho%
}(\rho _{n})](x)\phi _{1}[\hat{\rho}(\rho _{n})](y)}{|x-y|^{\mu }}dxdy \\
&&-\lambda _{1}(\hat{\rho}(\bar{\rho}))\int_{\mathbb{R}^{N}}|\phi _{1}[\hat{%
\rho}(\rho _{n})]|^{2}dx-t_{n}^{2}\int_{\mathbb{R}^{N}}\int_{\mathbb{R}^{3}}%
\frac{\phi _{1}[\hat{\rho}(\rho _{n})]^{2}(x)\phi _{1}[\hat{\rho}(\rho
_{n})]^{2}(y)}{|x-y|^{\mu }}dxdy=0,\quad n=1\ldots .
\end{eqnarray*}%
Suppose $t_{n}\rightarrow 0$. Then passing to the limit we obtain
\begin{equation*}
-\Delta \phi _{1}[\hat{\rho}(\bar{\rho})]+V(x)\phi _{1}[\hat{\rho}(\bar{\rho}%
)]-\int_{\mathbb{R}^{N}}\frac{\rho _{0}(x,y)\phi _{1}[\hat{\rho}(\bar{\rho}%
)](y)}{|x-y|^{\mu }}dy=\lambda _{1}(\hat{\rho}(\bar{\rho}))\phi _{1}[\hat{%
\rho}(\bar{\rho})].
\end{equation*}%
On the other hand, $\phi _{1}[\hat{\rho}(\bar{\rho})]$ is an eigenfunction
corresponding to the principal eigenvalue of $\mathcal{L}[\hat{\rho}(\bar{%
\rho})]$, and therefore
\begin{equation*}
-\Delta \phi _{1}[\hat{\rho}(\bar{\rho})]+V(x)\phi _{1}[\hat{\rho}(\bar{\rho}%
)]-\int_{\mathbb{R}^{N}}\frac{\hat{\rho}(\bar{\rho})(x,y)\phi _{1}[\hat{\rho}%
(\bar{\rho})](y)}{|x-y|^{\mu }}dy=\lambda _{1}(\hat{\rho}(\bar{\rho}))\phi
_{1}[\hat{\rho}(\bar{\rho})],  \label{eq:eigenP2}
\end{equation*}%
where
\begin{equation*}
\hat{\rho}(\bar{\rho})(x,y)=\bar{\rho}(x,y)+\eta \phi _{1}[\hat{\rho}(\bar{%
\rho})](x)\phi _{1}[\hat{\rho}(\bar{\rho})](y),
\end{equation*}%
with some $\eta >0$. By subtraction, we derive
\begin{equation*}
\eta \int_{\mathbb{R}^{N}}\frac{(\phi _{1}[\hat{\rho}(\bar{\rho})](y))^{2}}{%
|x-y|^{\mu }}dy\,\phi _{1}[\hat{\rho}(\bar{\rho})](x)=0,\quad x\in \mathbb{R}%
^{N},
\end{equation*}%
which implies $\phi _{1}[\hat{\rho}(\bar{\rho})]=0$ in $\mathbb{R}^{N}$.
Thus, we get a contradiction. By similar reasoning by contradiction, we
conclude that the sequence $\{t_{n}\}$ is bounded. Hence, summarizing the
above, we infer that
\begin{equation*}
\hat{u}(\lambda ,\rho _{n})\rightarrow \hat{u}(\lambda ,\bar{\rho})\quad
\text{in}\quad W\quad \text{as}\quad n\rightarrow +\infty .
\end{equation*}%
Theorem \ref{thm1} implies that $\hat{u}(\lambda ,\bar{\rho})$ is a
principal solution when $\lambda >\lambda _{1}(\bar{\rho})$. By
contradiction, we deduce that $\hat{u}(\lambda ,\bar{\rho})=0$ when $\lambda
=\lambda _{1}(\bar{\rho})$.

The proof of (ii) is similar to that of (i). The proof of (iii) is a direct
consequence of the assertions (i), (ii), and Theorem \ref{thm2}.

\section{Proof of Theorem \protect\ref{thm4}}

Let $\bar{\rho}\in \mathbb{L}_{\mu }^{2}$. Consider the function $[\lambda
_{1}(\bar{\rho}),+\infty )\ni \lambda \mapsto \hat{\mathcal{P}}(\lambda ,%
\bar{\rho})=\Vert \bar{\rho}-\hat{\rho}(\lambda ,\bar{\rho})\Vert _{\mathbb{L%
}_{\mu }^{2}}^{2}$ as defined by (\ref{eq:p}). Lemma \ref{lem51} implies
that this function is continuous on $[\lambda _{1}(\bar{\rho}),+\infty )$
and $\hat{\mathcal{P}}(\lambda _{1}(\bar{\rho}),\bar{\rho})=0$. Furthermore,
$\hat{\mathcal{P}}(\cdot ,\bar{\rho})$ is strong monotone increasing
function. Indeed, $\tilde{M}_{\lambda ^{\prime }}\subset \tilde{M}_{\lambda
} $ if $\lambda ^{\prime }>\lambda $, and therefore, (\ref{MinP}) yields $%
\hat{\mathcal{P}}(\lambda ^{\prime },\bar{\rho})\geq \hat{\mathcal{P}}%
(\lambda ,\bar{\rho})$. To obtain a contradiction, suppose that $\hat{%
\mathcal{P}}(\lambda ^{\prime },\bar{\rho})=\hat{\mathcal{P}}(\lambda ,\bar{%
\rho})$. Note that $\hat{\rho}(\lambda ^{\prime },\bar{\rho})\in \tilde{M}%
_{\lambda ^{\prime }}\subset \tilde{M}_{\lambda }$, and therefore
\begin{equation*}
\Vert \bar{\rho}-\hat{\rho}(\lambda ^{\prime },\bar{\rho})\Vert _{\mathbb{L}%
_{\mu }^{2}}^{2}=\hat{\mathcal{P}}(\lambda ^{\prime },\bar{\rho})=\hat{%
\mathcal{P}}(\lambda ,\bar{\rho})=\min \left\{ \Vert \rho -\bar{\rho}\Vert _{%
\mathbb{L}_{\mu }^{2}}^{2}\mid \rho \in \tilde{M}_{\lambda }\right\} .
\end{equation*}%
Hence, the uniqueness of the minimizer of (\ref{MinP}) implies a
contradiction.

Thus, $\hat{\mathcal{P}}(\lambda, \bar{\rho}) \to C\leq +\infty$ as $\lambda
\to +\infty$. Suppose, contrary to our claim, that $C<+\infty$. Then there
exists a sequence $\hat{\rho}^n:=\hat{\rho}(\lambda_n, \bar{\rho})$, $%
n=1,\ldots$ such that $\hat{\mathcal{P}}(\lambda_n, \bar{\rho})=\| \bar{\rho}
- \hat{\rho}^n \|_{\mathbb{L}_{\mu}^{2}}^{2} \to C$ as $n\to +\infty$. As in
the proof of Theorem \ref{thm1} this implies that there exists a weak in $%
\mathbb{L}_{\mu}^{2}$ limit point $\tilde{\rho} \in \mathbb{L}_{\mu}^{2}$ of
$(\hat{\rho}^n)$. This by weakly upper semicontinuity of $\lambda_1(\rho)$
implies a contradiction: $+\infty =\limsup_{n\to +\infty} \lambda_1(\hat{\rho%
}^n)\leq \lambda_1(\tilde{\rho})<+\infty$.

Thus, $\hat{\mathcal{P}}(\cdot ,\bar{\rho})$ is strong monotone increasing
function, and therefore, there exists a continuous inverse function $\hat{%
\mathcal{P}}^{-1}(\cdot ,\bar{\rho})$. Define $\lambda ^{\kappa }(\bar{\rho}%
)=\hat{\mathcal{P}}^{-1}(\kappa ,\bar{\rho})$, $\forall \kappa >0$. Then, $%
\kappa =\hat{\mathcal{P}}(\lambda ^{\kappa }(\bar{\rho}),\bar{\rho})$. Let
us show that $\check{\rho}=\hat{\rho}(\lambda ^{\kappa }(\bar{\rho}),\bar{%
\rho})$ is a solution of (\ref{eq-10}). Conversely, suppose that there
exists $\tilde{\rho}\in L_{\mu }^{2}\setminus \{\hat{\rho}(\lambda ^{\kappa
}(\bar{\rho}),\bar{\rho})\}$ such that
\begin{equation*}
\Vert \bar{\rho}-\tilde{\rho}\Vert _{\mathbb{L}_{\mu }^{2}}^{2}=\kappa \text{
and }\lambda _{1}(\tilde{\rho})>\lambda ^{\kappa }(\bar{\rho}).
\end{equation*}%
Since $\kappa =\hat{\mathcal{P}}(\lambda ^{\kappa }(\bar{\rho}),\bar{\rho})$%
, $\tilde{\rho}$ is a minimizer of (\ref{MinP}) with $\lambda =\lambda
^{\kappa }(\bar{\rho})$. This is a contradiction, since (\ref{MinP}) has a
unique minimizer $\hat{\rho}(\lambda ^{\kappa }(\bar{\rho}),\bar{\rho})$ and
by the assumption $\tilde{\rho}\neq \hat{\rho}(\lambda ^{\kappa }(\bar{\rho}%
),\bar{\rho})$. Thus, $\lambda _{1}^{\kappa }(\bar{\rho})=\lambda ^{\kappa }(%
\bar{\rho})=\hat{\mathcal{P}}^{-1}(\kappa ,\bar{\rho})$, $\check{\rho}=\hat{%
\rho}(\lambda _{1}(\bar{\rho}),\bar{\rho})$ and $\lambda _{1}^{\kappa }(\bar{%
\rho})=\lambda _{1}(\check{\rho})>\lambda _{1}(\bar{\rho})$.

The remainder of the proof follows directly from Theorems \ref{thm1} and \ref%
{thm2}.

\section{Concluding remarks}

\label{sec:conclusion}

This study serves a dual purpose: it aims not only to solve the nonlinear
equation \eqref{eq:M} but also to develop a novel approach in the theory of
inverse problems. More generally, we address the inverse problem of
reconstructing the density function $\rho$ of a linear operator $\mathcal{L}%
[\rho]$ from a finite set of observed eigenvalues $\{\lambda_i(\rho)%
\}_{i=1}^m$, where $1 \leq m < \infty$. Such problems are inherently
ill-posed, as a finite amount of spectral data typically admits infinitely
many solutions (see, e.g., \cite{chadan, glad}).

A novel approach to these challenges has been proposed in \cite{ValIl_JDE}.
The method is predicated on the assumption that \emph{a priori} information
about the system is available. Specifically, for the operator $\mathcal{L}%
[\rho]$, we assume the existence of a reference function $\bar{\rho }\in
\mathbb{L}^2_{\mu}$ that approximates the unknown density $\rho$.

This framework leads to the \emph{inverse optimal problem} (IOP) \cite%
{ValIl_JDE, ValIl_PhysD}:

($\mathcal{P}(m)$): \textit{Given a reference density $\rho _{0}\in \mathbb{L%
}_{\mu }^{2}$ and a finite set of target eigenvalues $\{\lambda
_{i}\}_{i=1}^{m}$ with $1\leq m<\infty $, find a density $\hat{\rho}$ that
minimizes the distance to $\rho _{0}$ in a prescribed norm, subject to the
constraints $\lambda _{i}(\hat{\rho})=\lambda _{i}$ for $i=1,\dots ,m$.}

The inference procedure used for the single equation (\ref{eq:M}) can be
generalized to the multiparameter case ($\mathcal{P}(m)$). In this setting,
one obtains the following coupled system of nonlinear equations associated
with problem ($\mathcal{P}(m)$):

\begin{equation*}
\begin{cases}
-\Delta u_{1}+V(\mathbf{x})u_{1}-S[\rho ]u_{1}+\sum_{k=1}^{m}\sigma
_{k}\left( |\mathbf{x}|^{-\mu }\ast |u_{k}|^{2}\right) u_{1}=\lambda
_{1}u_{1}, \\[6pt]
\hspace{6cm}\vdots \\[6pt]
-\Delta u_{m}+V(\mathbf{x})u_{m}-S[\rho ]u_{m}+\sum_{k=1}^{m}\sigma
_{k}\left( |\mathbf{x}|^{-\mu }\ast |u_{k}|^{2}\right) u_{m}=\lambda
_{m}u_{m},%
\end{cases}%
\text{ }\mathbf{x}\in \mathbb{R}^{N},
\end{equation*}%
where $\rho $ denotes the associated two-point density function, and the
coefficients $\sigma _{k}\in \mathbb{R}$ ($k=1,\dots ,m$) characterize the
type and strength of the nonlocal interactions.

It should be emphasized that the uniqueness result for the ground state
established in Theorem \ref{thm2} remains valid even in the case $\rho =0$,
i.e., for the equation
\begin{equation*}
-\Delta u+V(x)u+\left( |x|^{-\mu }\ast |u|^{2}\right) u=\lambda u,\quad
\forall x\in \mathbb{R}^{N}.
\end{equation*}%
To the best of our knowledge, this result has not been established
previously in the literature.

The inverse optimal problem framework developed in this work offers several
advantages for both theoretical analysis and numerical computation. First,
by reformulating the nonlinear PDE as a constrained optimization problem, we
obtain a variational characterization of solutions that is particularly
amenable to numerical approximation techniques, including modern machine
learning approaches such as the Deep Ritz method. Second, the explicit
reconstruction formulas for $\hat{\rho}$ and $\hat{u}$ in terms of spectral
data provide a direct computational pathway for solving inverse problems in
quantum systems and other contexts where only partial spectral information
is available. Finally, the extension to multiparameter problems opens the
door to applications in multi-component quantum systems and coupled
nonlinear wave phenomena. Future work will explore these computational
implementations and applications to specific physical systems.

\section*{Acknowledgments}

Juntao Sun was supported by the National Natural Science Foundation of China
(Grant No. 12371174). Shuai Yao was supported by the National Natural
Science Foundation of China (Grant No. 12501218) and Shandong Provincial
Natural Science Foundation (Grant No. ZR2025QC1512).

\end{document}